\theoremstyle{plain}
\newtheorem{theorem}{Theorem}[section]
\newtheorem{corollary}[theorem]{Corollary}
\newtheorem{lemma}[theorem]{Lemma}
\newtheorem{proposition}[theorem]{Proposition}
\newtheorem{definition-lemma}[theorem]{Definition-Lemma}
\newtheorem{defn}[theorem]{Definition}
\newtheorem{remark}[theorem]{Remark}
\newtheorem{conjecture}[theorem]{Conjecture}
\theoremstyle{definition}
\newtheorem*{ack*}{Acknowledgements}
\title{On a generalized canonical bundle formula and generalized adjunction}
\author{Stefano Filipazzi}
\thanks{2010 {\it Mathematics Subject Classification.} 14N30; 14E30, 14J40}
\newcommand{\Q}{\mathbb{Q}}			
\newcommand{\R}{\mathbb{R}}			
\newcommand{\C}{\mathbb{C}}			
\newcommand{\N}{\mathbb{N}}			
\newcommand{\rar}{\rightarrow}		
\newcommand{\drar}{\dashrightarrow}	
\DeclareMathOperator{\Spec}{Spec}		
\DeclareMathOperator{\mult}{mult}		
\DeclareMathOperator{\Supp}{Supp}		
\DeclareMathOperator{\codim}{codim}	
\DeclareMathOperator{\rk}{rk}			
\DeclareMathOperator{\coeff}{coeff}	
\DeclareMathOperator{\red}{red}		
\DeclareMathOperator{\Ex}{Ex}		
\DeclareMathOperator{\dv}{div}		
\DeclareMathOperator{\lct}{lct}		
\def\O#1.{\mathcal {O}_{#1}}			
\def\pr #1.{\mathbb P^{#1}}				
\def\af #1.{\mathbb A^{#1}}				
\def\ses#1.#2.#3.{0\to #1\to #2\to #3 \to 0}		
\def\xrar#1.{\xrightarrow{#1}}			
\def\K#1.{K_{#1}}						
\def\bA#1.{\mathbf{A}_{#1}}				
\def\bM#1.{\mathbf{M}_{#1}}				
\def\bL#1.{\mathbf{L}_{#1}}				
\def\bB#1.{\mathbf{B}_{#1}}				
\def\bK#1.{\mathbf{K}_{#1}}				
\def\bD#1.{\mathbf{D}_{#1}}				
\def\subs#1.{_{#1}}						
\def\sups#1.{^{#1}}						
\newcommand{\Addresses}{{
  \bigskip
  \footnotesize

  S.~Filipazzi, \textsc{Department of Mathematics, University of Utah,
    Salt Lake City,\\ UT 84112, USA}\par\nopagebreak
  \textit{E-mail address}: \texttt{filipazz@math.utah.edu}
  
}}
\begin{document}

\selectlanguage{english}

\begin{abstract}
In this note, we extend the theories of the canonical bundle formula and adjunction to the case of generalized pairs. As an application, we study a particular case of a conjecture by Prokhorov and Shokurov.
\end{abstract}

\maketitle

\tableofcontents

\section{Introduction}

Recently, Birkar and Zhang introduced the notion of generalized pair \cite{BZ}.
This kind of pair arises naturally in certain situations, such as the canonical bundle formula \cite{Kaw98, Amb99, FM00}, and adjunction theory \cite{Kaw98, Amb99, Bir16a}.
Furthermore, generalized pairs play an important role in recent developments, such as the study of the Iitaka fibration \cite{BZ}, and the proof of the BAB conjecture \cite{Bir16a, Bir16b}.

Among the techniques in birational geometry, adjunction theory is one of the most powerful tools.
It relates the geometry and the singularities of the ambient variety to those of appropriate subvarieties.
We call {\it adjunction} the process of inferring statements about a subvariety from some knowledge of the ambient variety, while the inverse and usually more complicated process is called {\it inversion of adjunction}.
The most satisfactory formulation of this theory in the case of pairs is the following, due to Hacon \cite{Hac14}.

\begin{theorem}[{\cite[Theorem 0.1]{Hac14}}] \label{inversion Hacon}
Let $W$ be a log canonical center of a pair $(X,\Delta = \sum \delta_i \Delta_i)$ where $0 \leq \delta_i \leq 1$.
Then $(X,\Delta)$ is log canonical in a neighborhood of $W$ if and only if $(W,\mathbf{B}(W;X,\Delta))$ is log canonical.
\end{theorem}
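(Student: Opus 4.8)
The plan is to establish the two implications separately, in each case reducing to the codimension-one case of inversion of adjunction (Kawakita's theorem) combined with the canonical bundle formula.

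\textbf{Set-up.} First I would reduce to the case where $W$ is a \emph{minimal} log canonical centre of $(X,\Delta)$: an arbitrary lc centre contains a minimal one, and the construction of $\mathbf{B}(-;X,\Delta)$ is compatible with iterating adjunction, so by Noetherian induction on $\dim W$ the general statement follows from the minimal case applied to $(W,\mathbf{B}(W;X,\Delta))$. When $W$ is minimal it is normal (Kawamata, Ambro), and one has a decomposition
\[
(K_X+\Delta)|_W \sim_{\mathbb{R}} K_W + \mathbf{B}(W;X,\Delta), \qquad \mathbf{B}(W;X,\Delta) = B_W + M_W,
\]
with $B_W \ge 0$ the boundary part and $M_W$ the moduli part. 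Concretely, one passes to a $\mathbb{Q}$-factorial dlt modification $f\colon Y \to X$ of $(X,\Delta)$, iterates divisorial adjunction along a chain $S_1 \supset S_1\cap S_2 \supset \cdots \supset Z$ of strata of $\lfloor \Delta_Y \rfloor$ with $Z$ dominating $W$, and applies the canonical bundle formula to the induced fibration $g\colon Z \to W$ with its $g$-trivial pair $(Z,\Delta_Z)$.

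\textbf{Adjunction.} If $(X,\Delta)$ is lc near $W$, then $(Y,\Delta_Y)$ is lc near $f^{-1}(W)$, so each successive divisorial adjunction preserves log canonicity and $(Z,\Delta_Z)$ is lc near $g^{-1}(W)$; the canonical bundle formula then forces the coefficients of $B_W$ to be $\le 1$, while $M_W$ is nef by the positivity of the moduli part (Kawamata, Ambro, Fujino--Mori, Kollár). Hence $(W, B_W + M_W)$ is generalized lc, which is exactly the assertion that $(W,\mathbf{B}(W;X,\Delta))$ is log canonical.

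\textbf{Inversion and the main obstacle.} For the converse --- the substantial direction --- I would argue by contradiction: assume $(W,\mathbf{B}(W;X,\Delta))$ is log canonical but $(X,\Delta)$ fails to be lc in every neighbourhood of $W$. By construction $(Y,\Delta_Y)$ is lc near $f^{-1}(W)$, so the non-lc locus of $(X,\Delta)$ meets $W$ transversally to the dlt stratification; pushing this locus down through $Y \supset S_1 \supset \cdots \supset Z$ and using the Kollár--Shokurov connectedness principle, one shows the failure of log canonicity propagates to a non-lc point of $(Z,\Delta_Z)$ over $W$ --- the key being that Kawakita's divisorial inversion of adjunction, applied along $S_1,\dots,S_k$, transfers failure of log canonicity downward at each step. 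Running the canonical bundle formula in reverse, such a point yields either a coefficient $>1$ in $B_W$ or a non-lc contribution not absorbed by the nef part $M_W$, contradicting that $(W,B_W+M_W)$ is generalized lc. The crux is precisely this last point: $M_W$ is only a nef $b$-divisor, not effective, so one must rule out that the pathology is cancelled by $M_W$. I expect the cleanest remedy to be a tie-breaking perturbation of $\Delta$ near $W$ which, on a suitable model, turns the relevant lc place into a divisor, reducing the critical step entirely to Kawakita's theorem in codimension one; keeping track of how such a perturbation is recorded by the boundary part of the canonical bundle formula, and making it compatible with the $b$-divisorial moduli data, is the technical heart of the argument.
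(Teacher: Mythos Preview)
This statement is quoted in the paper as a result of Hacon \cite{Hac14}; the paper does not give its own proof of it, but it does prove the generalized-pair analogue (Theorem~\ref{prop inv of adj}) following Hacon's method, so the relevant comparison is with that argument.

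Your strategy differs substantially from Hacon's, and the inversion direction as you sketch it has a genuine gap. Hacon does \emph{not} iterate Kawakita's codimension-one theorem down a chain of strata. Instead, on a $\Q$-factorial dlt model $X^m$ with a chosen lc place $S^m$ over $W$, he writes the ``excess'' divisor $\Sigma^m$ recording coefficients $>1$, and runs a $(\K X^m.+B^m+M^m)$-MMP over $X$ with scaling of a big divisor $L^m$. The argument then splits: if at some step $S^m_i\cap\Sigma^m_i\neq\emptyset$, the pair induced on $S^m_i$ is visibly not lc and the canonical bundle formula pushes this to $W$, contradicting the hypothesis. If $S^m_i\cap\Sigma^m_i=\emptyset$ for all $i$, one uses Kawamata--Viehweg vanishing (applied to $L^m_i-k\Sigma^m_i-S^m_i$ for $k\gg 0$) to produce a surjection $\mu_{i,*}\O X^m_i.(L^m_i-k\Sigma^m_i)\twoheadrightarrow \bar\mu_{i,*}\O S^m_i.(L^m_i)$; on the other hand, the assumption that $(X,\Delta)$ is not lc near $W$ forces local sections of the left-hand side to vanish along $W\cap\mu_{i_0}(\Sigma^m_{i_0})\neq\emptyset$, contradicting surjectivity. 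The engine is vanishing and sheaf cohomology, not connectedness plus repeated divisorial inversion.

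The gap in your approach is exactly where you flag it: you need the non-lc locus of $(X,\Delta)$ to meet the image of $Z$ in a way that forces $(Z,\Delta_Z)$ to be non-lc over some point of $W$, and then you need this to survive the canonical bundle formula despite the nef moduli part. Your proposed fix via tie-breaking does not obviously do this: a perturbation making $W$ exceptional changes $\Delta$ and hence changes $\mathbf{B}(W;X,\Delta)$, so you would have to control how the perturbed $\mathbf{B}$ relates to the original one, and this is not straightforward when the moduli part is only b-nef. Hacon's cohomological argument sidesteps this entirely by never trying to track singularities through the fibration $Z\to W$ in the hard case; the vanishing theorem does the work of linking $\Sigma^m$ to $S^m$ globally.
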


In the case that $W$ has codimension 1, the statement takes the following simpler form, originally due to Kawakita \cite{Kaw07}.

\begin{theorem}[\cite{Kaw07}] \label{inversion Kawakita}
Let $(X,S+B)$ be a log pair such that $S$ is a reduced divisor which has no common component with the support of $B$, let $S^\nu$ denote the normalization of $S$, and let $B^\nu$ denote the different of $B$ on $S^\nu$.
Then $(X,S+B)$ is log canonical near $S$ if and only if $(S^\nu,B^\nu)$ is log canonical.
\end{theorem}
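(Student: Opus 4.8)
The plan is to treat the two implications separately: the ``only if'' direction is the classical adjunction formula for the different and is essentially formal, whereas the ``if'' direction — the actual inversion of adjunction — carries all the difficulty.

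For ``only if'', I would fix a log resolution $f\colon Y\to X$ of $(X,S+B)$ so that the strict transform $S_Y$ of $S$ is smooth, and write $K_Y+S_Y+B_Y=f^*(K_X+S+B)$, allowing components of $B_Y$ of coefficient $>1$. Ordinary adjunction on the smooth variety $Y$ gives $(K_Y+S_Y+B_Y)|_{S_Y}=K_{S_Y}+B_{S_Y}$ with $B_{S_Y}=B_Y|_{S_Y}$, and pushing forward along the induced morphism $g\colon S_Y\to S^\nu$ recovers, by definition, the different: $K_{S_Y}+B_{S_Y}=g^*(K_{S^\nu}+B^\nu)$, with $g$ arranged to be a log resolution of $(S^\nu,B^\nu)$. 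If $(X,S+B)$ is log canonical near $S$, then every component of $B_Y$ whose image meets $S$ has coefficient $\le 1$; in particular so do the components meeting $S_Y$, hence $B_{S_Y}$, and therefore $B^\nu$, has coefficients $\le 1$, i.e. $(S^\nu,B^\nu)$ is log canonical.

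For ``if'' I would argue by contradiction: suppose $(S^\nu,B^\nu)$ is log canonical but $(X,S+B)$ is not log canonical near $S$. Then on a suitable log resolution there is a prime divisor $E$ with $\operatorname{coeff}_E(B_Y)>1$ whose center on $X$ meets $S$. If $E$ meets $S_Y$ we are done: $E\cap S_Y$ is a component of $B_{S_Y}$ of coefficient $>1$, contradicting log canonicity of $(S^\nu,B^\nu)$ through the identity $K_{S_Y}+B_{S_Y}=g^*(K_{S^\nu}+B^\nu)$ above.

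The hard part — and the genuine obstacle — is that $E$ need \emph{not} meet $S_Y$: a divisor over $X$ with negative log discrepancy whose center meets $S$ can become disjoint from the strict transform of $S$ on every log resolution, so naive restriction cannot detect it, yet its effect must still appear on $S^\nu$. Vanishing-theorem arguments (connectedness, Kawamata--Viehweg) only deliver the weaker klt-versus-plt form of the statement and do not reach the log canonical threshold. Following Kawakita, I would localize the problem at a closed point of $\operatorname{center}_X(E)\cap S$, cutting with general hyperplanes through it to reduce to a zero-dimensional center, and then run a limiting argument — passing to a generic limit of the ideals defining $B$ near that point — to replace $(X,S+B)$ by a simpler model on which the equivalence is transparent, with ACC/boundedness statements for log canonical thresholds keeping the limiting process under control. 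Alternatively, Theorem~\ref{inversion Kawakita} is the codimension-one instance of Theorem~\ref{inversion Hacon} once one checks that Hacon's restricted b-divisor $\mathbf{B}(S;X,S+B)$ on $S^\nu$ is the different $B^\nu$; but since the proof of Theorem~\ref{inversion Hacon} rests on the same circle of ideas, this is not a true shortcut.
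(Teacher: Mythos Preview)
The paper does not give its own proof of Theorem~\ref{inversion Kawakita}: this is Kawakita's result, stated in the introduction as background with citation \cite{Kaw07} and no proof supplied. There is therefore nothing in the paper to compare your proposal against for this particular statement.

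That said, the paper does prove a generalized-pair analogue, Theorem~\ref{prop inv of adj}, and the method there is closer to your ``alternative'' paragraph than to your main sketch: it follows Hacon's strategy via weak generalized dlt models, running an MMP with scaling, and applying relative Kawamata--Viehweg vanishing to force a contradiction with a surjectivity statement for direct image sheaves. In particular, the paper never touches Kawakita's generic-limit/ACC machinery. So if the intent was to rehearse the line of argument actually used in this paper, your final paragraph is the relevant one, and your claim that Hacon's approach ``rests on the same circle of ideas'' as Kawakita's is not quite right---Hacon's proof, and the paper's generalization of it, avoid generic limits entirely and instead extract the contradiction from cohomology vanishing and the negativity lemma on a dlt model.
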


In the setup of generalized pairs, Birkar has a version of divisorial inversion of adjunction under some technical conditions.

\begin{theorem}[Lemma 3.2, \cite{Bir16a}] \label{Birkar inversion of adjuction}
Let $(X',B'+M')$ be a $\Q$-factorial generalized pair with data $X \rar X'$ and $M$.
Assume $S'$ is a component of $B'$ with coefficient 1, and that $(X',S')$ is plt.
Let
$$
\K S'. + B_{S'} +M_{S'}= (\K X'. + B' +M')|_{S'}
$$
be given by generalized adjunction.
If $(S',B_{S'} +M_{S'})$ is generalized log canonical, then $(X',B'+M')$ is generalized log canonical near $S'$.
\end{theorem}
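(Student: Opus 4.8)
The plan is to reduce the generalized statement to the classical divisorial inversion of adjunction, Theorem \ref{inversion Kawakita}, by absorbing the nef part $M'$ into the boundary. First I would pass to a sufficiently high log resolution $X \to X'$ on which $M$ descends, i.e. $M = f^* M'$ for some nef $\mathbf{b}$-divisor represented on $X$; after possibly blowing up further we may assume $X$ is also a log resolution of $(X', B')$ and that the strict transform of $S'$ is disjoint from the exceptional locus over the generic point of each generalized log canonical center we care about. The key point is that the nef part $M'$, being the pushforward of a nef divisor, can be perturbed: using that $X'$ is $\Q$-factorial and that $(X',S')$ is plt, I would like to find an effective $\Q$-divisor $M'_\epsilon \geq 0$ such that $M' \sim_{\Q} M'_\epsilon$ (at least locally near $S'$) with $M'_\epsilon$ not containing $S'$ and with $(X', B' + M'_\epsilon)$ still having controlled singularities for small coefficient — more precisely so that the generalized log canonical property of the generalized pair is detected by the ordinary log canonical property of $(X', B' + (1-\delta) M'_\epsilon + \delta M'_\delta)$-type perturbations.

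The main steps, in order, are: (1) choose the resolution and arrange that generalized adjunction for $(X',B'+M')$ along $S'$ is computed by the formula $\K S'. + B_{S'} + M_{S'} = (\K X'. + B' + M')|_{S'}$, where $B_{S'}$ is the divisorial part (a generalized different) and $M_{S'}$ is the moduli/nef part inherited from $M$; (2) observe that since $(X',S')$ is plt, $S'$ is normal, so $S^\nu = S'$ and generalized adjunction genuinely refines classical adjunction; (3) perturb $M'$ to an effective divisor as above and conclude, via Theorem \ref{inversion Kawakita} applied to $(X', S' + B' + M'_\epsilon)$ and a limiting argument in $\epsilon$, that generalized log canonicity of $(S', B_{S'} + M_{S'})$ forces $(X', B'+M')$ to be generalized log canonical near $S'$. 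A cleaner alternative for step (3) is to argue by contradiction: if $(X',B'+M')$ is not generalized log canonical near $S'$, there is a divisorial valuation $E$ over $X'$ centered in $S'$ with generalized log discrepancy $< 0$; I would run a suitable MMP or use connectedness of the non-klt locus (the generalized version of the Kollár–Shokurov connectedness principle) to conclude that the restriction of this bad behaviour to $S'$ is detected in the generalized different, contradicting generalized log canonicity of $(S', B_{S'}+M_{S'})$.

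The hard part will be step (3): controlling the interaction between the perturbation of the nef part $M'$ and the generalized different $B_{S'}$ on $S'$. One must ensure that the coefficients appearing in $B_{S'}$ (which encode both the classical different of $B'$ and contributions from $M$) vary continuously and compatibly as $M'$ is replaced by $M'_\epsilon$, so that a log canonical threshold computation upstairs on $X'$ transfers exactly to one downstairs on $S'$. The plt hypothesis on $(X',S')$ is doing real work here — it guarantees $S'$ is normal and that near $S'$ there is essentially one non-klt place, which is what makes the connectedness argument (or the perturbation argument) go through without pathological cancellation. I expect the technical heart of the proof to be a careful bookkeeping lemma identifying $\coeff_D B_{S'}$ for a prime divisor $D \subset S'$ with an appropriate log canonical threshold of the pulled-back pair, and then checking that this identification is preserved under the $\epsilon$-perturbation.
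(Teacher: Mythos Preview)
The paper does not prove this statement at all: Theorem \ref{Birkar inversion of adjuction} is quoted in the introduction as a result of Birkar \cite[Lemma 3.2]{Bir16a} and is used later (for instance in the proof of Theorem \ref{FG comparison}) as a black box. There is therefore no proof in the paper to compare your proposal against.

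That said, your proposed argument has a genuine gap in step~(3). You want to replace $M'$ by an effective $\Q$-divisor $M'_\epsilon \sim_\Q M'$ and then invoke Kawakita's theorem for the ordinary pair $(X',S'+B'+M'_\epsilon)$. But nothing in the hypotheses guarantees that $M'$ is $\Q$-linearly equivalent to an effective divisor: $M$ is only nef (over $V$), not semi-ample or big, and its pushforward $M'$ need not lie in the effective cone. Even when such an $M'_\epsilon$ exists, the passage from the generalized pair $(X',B'+M')$ to the ordinary pair $(X',B'+M'_\epsilon)$ strictly worsens singularities in general, because the nef part ceases to be a phantom once it is made effective; in particular the generalized log canonical threshold along $S'$ can drop, so knowing $(S',B_{S'}+M_{S'})$ is glc does not control $(S',B_{S'}+M'_\epsilon|_{S'})$. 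Your ``limiting argument in $\epsilon$'' cannot repair this, since the discrepancies jump rather than vary continuously when $M'$ is traded for boundary.

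Your alternative sketch via contradiction and connectedness is closer in spirit to what Birkar actually does, but as written it is too vague to be a proof: you would need to explain precisely how a divisor $E$ with $a_E(X',B'+M')<-1$ whose center meets $S'$ forces, via restriction to the strict transform $S$ on a log resolution where $M$ descends, a generalized discrepancy $<-1$ for $(S',B_{S'}+M_{S'})$. This is where the plt hypothesis and $\Q$-factoriality are used, to control the exceptional locus over $S'$ and to compare $f^*M'-M$ with the exceptional divisors; the argument is a direct discrepancy computation together with the negativity lemma, not a perturbation.
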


The purpose of this work is to improve the statement of Theorem \ref{Birkar inversion of adjuction} and broaden the current knowledge of inversion of adjunction in the setup of generalized pairs.
As the work of Birkar and Zhang does not consider adjunction for generalized log canonical centers of higher codimension \cite{BZ}, a relevant part of this note is to develop an appropriate theory in such setup.

In analogy to the work of Kawamata and Ambro \cite{Kaw98, Amb99}, we first define generalized adjunction in the case of fibrations.
Indeed, the canonical bundle formula is the key tool to define adjunction on higher codimensional centers.
In particular, we prove the following, which partly answers a question posed by Di Cerbo and Svaldi \cite[Remark 7.4]{dCS17}.

\begin{theorem} \label{full generality theorem firbations}
Let $(X',B'+M')$ be a {projective} generalized sub-pair with data $X \rar X'$ and $M$.
Assume that $B'$, $M'$ and $M$ are $\Q$-divisors.
Let $f  \colon  X' \rar Z'$ be a contraction such that $\K X'. + B' + M' \sim \subs \Q, f. 0$.
Also, let $(X',B'+M')$ be generalized log canonical over the generic point of $Z'$.
Then, the b-divisor $\bM Z'.$ is $\Q$-Cartier and b-nef.
\end{theorem}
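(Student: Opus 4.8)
The plan is to reduce to a well-behaved birational model of the fibration, to set up the discriminant and moduli b-divisors of $(X',B'+M')\rar Z'$ in analogy with the work of Kawamata and Ambro, and then to prove the b-nefness of $\bM Z'.$ by reducing to the classical canonical bundle formula for lc-trivial fibrations; the extra difficulty is concentrated in the nef b-divisor $\mathbf M$.

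First I would pass to convenient models. Write $\pi\colon X\rar X'$ for the birational morphism in the data, with $M$ nef on $X$ and $\mathbf M=\overline M$, and set $\K X.+B+M=\pi^*(\K X'.+B'+M')$, which defines $B$. After replacing $X$ by a higher resolution — harmless, as it merely pulls $M$ back — I may assume that $g:=f\circ\pi\colon X\rar Z'$ is a morphism, hence a contraction; that $\mathbf M$ descends to $X$; and, after further modifications of $X$ and $Z'$, that $Z'$ is smooth and that the image under $g$ of the locus where $(X,B+M)$ is not generalized sub-log canonical, together with the locus over which $g$ is not equidimensional with reduced fibres, is contained in a simple normal crossing divisor $\Sigma\subseteq Z'$. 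For a prime divisor $P$ over $Z'$, realised as a divisor on a suitable model, put $\coeff_P\bB Z'.:=1-\gamma_P$, where $\gamma_P$ is the largest $t$ for which $(X,B+t\,g^*P+M)$ is generalized sub-log canonical over the generic point of $P$; the generalized log canonical hypothesis over the generic point of $Z'$ makes $\gamma_P$ finite, and on the chosen model $\bB Z'.$ is supported on $\Sigma$. Since $\K X.+B+M\sim\subs\Q.g^*D$ for some $\Q$-Cartier $D$ on $Z'$, I define $\bM Z'.$ by requiring $\K X'.+B'+M'\sim\subs\Q.f^*(\K Z'.+\bB Z'.+\bM Z'.)$; by construction $M_{Z'}$ is trivial over the generic point of $Z'$. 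It then suffices to exhibit one birational model $Z''$ of $Z'$ such that $\bM Z'.$ descends to $Z''$ and $M_{Z''}$ is nef.

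For this I would reduce to the classical theory. Were the b-nef b-divisor $\mathbf M$ b-semiample, one could choose a general $0\le\Delta\sim\subs\Q. M$ on $X$; then $(X,B+\Delta)$ has the same sub-log canonical thresholds along $g$ as the generalized pair, so $(X,B+\Delta)/Z'$ is an lc-trivial fibration whose discriminant and moduli b-divisors over $Z'$ coincide with those of $(X',B'+M')\rar Z'$, and the canonical bundle formula of Ambro and Fujino--Mori \cite{Amb99, FM00} — applicable since all coefficients are rational — would at once give that $\bM Z'.$ is $\Q$-Cartier and b-nef. In general $\mathbf M$ is only b-nef, and the work goes into manufacturing such an effective replacement without altering the discriminant and moduli b-divisors. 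The essential local input is that on the general fibre $F$ of $g$ the pair $(F,B|_F+M|_F)$ is a generalized log canonical Calabi--Yau pair, so that $M|_F\sim\subs\Q.-(\K F.+B|_F)$ is nef and — appealing to abundance-type results for pairs with nef anti-log-canonical class, after the standard reductions — semiample; hence over the generic point of $Z'$ one may replace $M$ by a general member of $|mM|/m$, spread this out over an open $U\subseteq Z'$, and combine it with the analysis of $\bB Z'.$ along the components of $\Sigma$ to land in the setting of the classical canonical bundle formula.

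The main obstacle is exactly this passage from ``$\mathbf M$ b-nef'' to an honest boundary inducing the same discriminant and moduli b-divisors: it requires the semiampleness — hence abundance-type — input on the fibres, which is where the rationality of $B'$, $M'$ and $M$ enters, and then a verification that the b-divisor produced a priori only over $U$ extends to a $\Q$-Cartier, b-nef b-divisor on all of $Z'$. An alternative closer to Kawamata's and Ambro's original arguments would be to run the variation-of-Hodge-structure computation for the family $g$ directly and to check that twisting the relevant direct image sheaves on $Z'$ by the nef part $M$ preserves their semipositivity; the difficulty then reappears as the interaction of the nef part with the Hodge-theoretic moduli part.
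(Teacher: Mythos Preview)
Your setup of the discriminant and moduli b-divisors is fine, and your observation that if $\mathbf M$ were b-semiample one could replace $M$ by a general effective $\Q$-divisor and invoke the classical canonical bundle formula is correct; this is essentially Remark~\ref{remark min max} in the paper. The genuine gap is the step where you pass from ``$\mathbf M$ b-nef'' to ``$M|_F$ semiample on the general fibre'' by ``appealing to abundance-type results for pairs with nef anti-log-canonical class''. You are asking for abundance for the nef divisor $-(\K F.+B|_F)$ on an arbitrary-dimensional log canonical pair, which is open in general; and even granting semiampleness fibrewise, you have not explained how to produce a single $\Delta \sim_{\Q} M$ over an open $U$ that both descends to $X$ and leaves all generalized log canonical thresholds along the components of $\Sigma$ unchanged, nor how the resulting $\bM Z'.$ extends across $Z'\setminus U$ as a $\Q$-Cartier b-nef b-divisor. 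The alternative Hodge-theoretic sketch you offer is likewise only a restatement of the difficulty.

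The paper avoids abundance entirely. After reducing so that $(X',(B')^h)$ is $\Q$-factorial dlt, it splits into two cases. If $M'$ is numerically trivial on the geometric generic fibre, a result of Gongyo forces $M\sim_{\Q,Z'}0$ after a weakly semi-stable reduction, and one is back in the classical setting. Otherwise $\K X'.+(B')^h$ is not pseudo-effective over $Z'$, so one runs a $(\K X'.+(B')^h)$-MMP over $Z'$ ending in a Mori fibre space $g\colon X''\rar Y$; since $\rho(X''/Y)=1$, the pushforward $M''$ is genuinely $g$-ample, and a separate lemma (Lemma~\ref{lemma MFS}, resting on Theorem~\ref{lemma klt fibrations} where $M$ is assumed relatively \emph{semi-ample}, proved via weak semi-stable reduction) handles this fibration. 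One then inducts on the relative dimension of $Y\rar Z'$, using Lemma~\ref{lemma diagram of fibrations} to show that the tower $X''\rar Y\rar Z'$ induces the same $\bB Z'.$ and $\bM Z'.$ as $X'\rar Z'$. Thus the paper never needs $M$ to be semiample on an arbitrary fibre: the MMP manufactures relative ampleness at each inductive step.
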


The key step towards the proof of Theorem \ref{full generality theorem firbations} is the partial version given in Theorem \ref{lemma klt fibrations}.
The main tool in the proof of the latter is the weak semi-stable reduction introduced by Abramovich and Karu \cite{Kar99,AK}.

A suitable theory for a generalized canonical bundle formula allows us to move our focus to higher codimensional generalized log canonical centers.
First, we introduce an appropriate definition of adjunction in this setup.
The main idea is the following: let $W'$ be a generalized log canonical center of a generalized pair $(X',B'+M')$, and fix a generalized log canonical place $E \subset X$ on a higher birational model.
Thus, $E$ inherits a structure of generalized pair from divisorial generalized adjunction on $X$.
Then, we consider the fiber space $E \rar W'$ and induce a generalized pair structure on $W'$.
In particular, the following result can be seen as a generalization of Kawamata's subadjunction \cite{Kaw98}.

\begin{theorem} \label{gen adj higher codim}
Let $(X',B'+M')$ be a generalized pair with data $X \rar X' {\rar V}$ and $M$.
Let $B'$, $M'$ and $M$ be $\Q$-divisors.
Assume $(X',B'+M')$ is generalized log canonical, and fix an exceptional generalized log canonical center $W' \subset X'$.
{Assume that $W'$ is projective.}
Then, $W'$ is normal, and it admits a structure of generalized pair $(W',\bB W'. + \bM W'.)$.
In particular, the b-divisor $\bM W'.$ is a b-nef $\Q$-Cartier b-divisor, and $(W',\bB W'. + \bM W'.)$ is generalized klt.
\end{theorem}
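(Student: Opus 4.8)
The plan is to exhibit $W'$ as the base of a fibration arising from a generalized log canonical place of $(X',B'+M')$, and then to apply the generalized canonical bundle formula of Theorem \ref{full generality theorem firbations}.

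First I would pass to a convenient model. Let $E$ be the generalized log canonical place of $(X',B'+M')$ with center $W'$; it is unique, since $W'$ is an exceptional generalized log canonical center. After replacing $X$ by a higher log resolution (through which $X\rar X'$ factors and on which $M$ descends), I may assume that $E$ is a prime divisor on $X$, that $\Supp(\Delta_X)\cup\Supp(M)$ has simple normal crossings, and that $K_X+\Delta_X+M=\pi^*(K_{X'}+B'+M')$ for $\pi\colon X\rar X'$, where $\Delta_X$ has coefficients at most $1$ and $\coeff_E\Delta_X=1$. Write $\Delta_X=E+\Delta'$ with $E\not\subset\Supp(\Delta')$, set $B_E:=\Delta'|_E$ and $M_E:=M|_E$, and let $f:=\pi|_E\colon E\rar W'$; this is a projective surjective morphism, and $E$ is projective because $W'$ is. By generalized divisorial adjunction along $E$, the pair $(E,B_E+M_E)$ is a projective generalized sub-pair — generalized log canonical, as $(X,\Delta_X+M)$ is a generalized log canonical crepant model of $(X',B'+M')$, with moduli b-divisor the restriction of $\mathbf{M}$ and hence b-nef — satisfying
\[
K_E+B_E+M_E \;=\; (K_X+\Delta_X+M)|_E \;=\; f^{*}\bigl((K_{X'}+B'+M')|_{W'}\bigr).
\]
As $K_{X'}+B'+M'$ is $\Q$-Cartier, this forces $K_E+B_E+M_E\sim_{\Q,f}0$; and as $B'$, $M'$ and $M$ are $\Q$-divisors, so are $B_E$, $M_E$ and the descended moduli divisor.

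It remains to carry out adjunction for $f$. First I would reduce to the case where $f$ is a contraction and show that $W'$ is normal: using the connectedness of the non-klt locus of $(X,\Delta_X+M)$ relative to $X'$ together with the exceptionality of $W'$, the fibers of $f$ over points of $W'$ are connected, and minimality of $W'$ among generalized log canonical centers excludes a non-normal locus on $W'$; hence $W'$ is normal and the Stein factorization of $f$ is the identity. Now $f\colon E\rar W'$ is a contraction, $(E,B_E+M_E)$ is a projective generalized sub-pair that is generalized log canonical over the generic point of $W'$, and $K_E+B_E+M_E\sim_{\Q,f}0$, so the generalized canonical bundle formula applies: its construction yields b-divisors $\mathbf{B}_{W'}$ and $\mathbf{M}_{W'}$ making $(W',\mathbf{B}_{W'}+\mathbf{M}_{W'})$ a generalized pair with $K_E+B_E+M_E\sim_{\Q}f^{*}(K_{W'}+\mathbf{B}_{W'}+\mathbf{M}_{W'})$, and by Theorem \ref{full generality theorem firbations} the b-divisor $\mathbf{M}_{W'}$ is b-nef and $\Q$-Cartier. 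Finally, for generalized kltness: the coefficient of $\mathbf{B}_{W'}$ along a prime divisor $P$ over $W'$ is $1$ minus the generalized log canonical threshold of $f^{*}P$ with respect to $(E,B_E+M_E)$ over the generic point of $P$, and the exceptionality of $W'$ ensures that $(E,B_E+M_E)$ has no generalized log canonical center dominating $P$; hence that threshold is positive and every coefficient of $\mathbf{B}_{W'}$ is $<1$. Combined with b-nefness of $\mathbf{M}_{W'}$, this shows that $(W',\mathbf{B}_{W'}+\mathbf{M}_{W'})$ is generalized klt.

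The step I expect to be the main obstacle is the transfer of the connectedness principle and of the normality of minimal log canonical centers to the generalized setting, and, relatedly, making precise how the exceptionality hypothesis forces both the connectedness of $f$ and the strict bound on the coefficients of $\mathbf{B}_{W'}$. A secondary point requiring care is well-definedness: one must check that $(W',\mathbf{B}_{W'}+\mathbf{M}_{W'})$ is independent of the model $X$ and of the choice of generalized log canonical place over $W'$, which one verifies by comparing the canonical bundle formulas on common higher models.
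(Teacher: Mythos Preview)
Your overall strategy---extract the unique generalized log canonical place $E$, perform divisorial adjunction to obtain $(E,B_E+M_E)$, and then apply Theorem \ref{full generality theorem firbations} to $E\to W'$---matches the paper's. The divergence is precisely at the step you flag as the main obstacle, and there the proposal has a genuine gap.

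You propose to deduce normality of $W'$ and connectedness of $f$ from a connectedness principle together with minimality of $W'$. This is not yet an argument: the connectedness principle in the generalized setting is itself nontrivial, and ``minimality excludes a non-normal locus'' requires proof. More seriously, you omit entirely the verification that $\bB W',W'.$ is effective, which is needed for $(W',\bB W'.+\bM W'.)$ to be a generalized \emph{pair} rather than merely a sub-pair. The paper handles all three issues---normality of $W'$, that $f_E\colon E\to W'$ is a contraction, and effectivity of $\bB W',W'.$---with a single Kawamata--Viehweg vanishing computation: after shrinking $X'$ near $W'$ one writes $B=E+\Gamma-A$ with $\lfloor\Gamma\rfloor=0$ and $A\geq 0$ integral and $f$-exceptional, and the vanishing of $R^1 f_* \O X.(A-E)$ forces $(f_E)_*\O E.(A|_E)=\O W^\nu.=\O W'.$. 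This equality gives normality and connected fibers at once, and via \cite[Theorem 8.3.7]{K07} also the effectivity of $\bB W',W'.$. Your sketch, even if completed, would still owe a separate argument for effectivity.

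Two smaller points. First, the proof of Theorem \ref{full generality theorem firbations} requires the boundary to be effective over the generic point of the base; on your log resolution, $B_E$ need not be. The paper records this by extracting only $E$ over $\eta_{W'}$ via \cite[Lemma 4.5]{BZ}, on which model the induced $B_{E''}$ is effective. Second, your concern about well-definedness is moot in the exceptional case: the place $E$ is unique by hypothesis, and model-independence of the b-divisors is automatic from their construction.
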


Once generalized adjunction is established, we focus on generalized inversion of adjunction.
Following ideas of Hacon \cite{Hac14}, we prove the following.

\begin{theorem} \label{prop inv of adj}
Let $(X',B'+M')$ be a {projective} generalized pair with data $X \rar X'$ and $M$.
Assume that $B'$, $M$ and $M'$ are $\Q$-divisors.
Let $W'$ be a generalized log canonical center of $(X',B'+M')$ with normalization $W^\nu$.
{Assume that a structure of generalized pair $(W^\nu,\bB W^\nu. + \bM W^\nu.)$ is induced on the normalization $W^\nu$ of $W'$.}
Then, $(W^\nu,\bB W^\nu. + \bM W^\nu.)$ is generalized log canonical if and only if $(X',B'+M')$ is generalized log canonical in a neighborhood of $W'$.
\end{theorem}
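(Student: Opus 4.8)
The plan is to transport generalized log canonicity along the chain that builds the structure on $W^\nu$. Recall how that structure is produced (cf.\ Theorem \ref{gen adj higher codim} and its proof): one fixes a generalized log canonical place $E$ of $(X',B'+M')$ with center $W'$, realized as a prime divisor on a projective birational model $\pi\colon Y\rar X'$ with $\K Y.+\Gamma+M_Y=\pi^*(\K X'.+B'+M')$ and $\mult_E\Gamma=1$; performs divisorial generalized adjunction to obtain $(E,B_E+M_E)$ with $\K E.+B_E+M_E=(\K Y.+\Gamma+M_Y)|_E$; and, having arranged $\K E.+B_E+M_E\sim_{\Q,f}0$ for the contraction $f\colon E\rar W^\nu$ (the Stein factorization of $E\rar W'$), applies the generalized canonical bundle formula to $f$ — legitimate by Theorem \ref{full generality theorem firbations}, as $W^\nu$ is projective and the coefficients are rational — obtaining $\bB W^\nu.$ as the discriminant b-divisor and the b-nef $\bM W^\nu.$ as the moduli part. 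I would choose $\pi$ so that in addition $Y$ is $\Q$-factorial and $E$ is its only $\pi$-exceptional divisor, so that $\Gamma=\pi^{-1}_*B'+E\ge 0$ and, crucially, $\pi^{-1}(W')=E$; securing such a model is itself one of the technical points, discussed below.

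\textbf{The direct implication.}
Suppose $(X',B'+M')$ is generalized log canonical near $W'$. Since $\pi(E)=W'$ and $(Y,\Gamma+M_Y)$ is crepant to $(X',B'+M')$, the pair $(Y,\Gamma+M_Y)$ is generalized log canonical near $E$; divisorial adjunction then makes $(E,B_E+M_E)$ generalized log canonical, and it is so over the generic point of $W^\nu$ because $W'$ is a generalized log canonical center. By the definition of the discriminant, every coefficient of $\bB W^\nu.$ on every model is at most $1$; since $\bM W^\nu.$ is b-nef, this is exactly the statement that $(W^\nu,\bB W^\nu.+\bM W^\nu.)$ is generalized log canonical.

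\textbf{The reverse implication, and the main obstacle.}
Now assume $(W^\nu,\bB W^\nu.+\bM W^\nu.)$ is generalized log canonical, so $\bB W^\nu.$ has coefficients at most $1$ on every model; the idea is to run the chain backwards. First, by the defining property of the discriminant together with the fact that a generalized sub-pair which is generalized sub-log canonical over the generic point of the base and over the generic point of every divisor on every birational model of the base is generalized sub-log canonical — for if it failed along some $V\subsetneq W^\nu$, then on a model of $W^\nu$ on which $V$ becomes a divisor $D$ the offending valuation would dominate $D$ and force $\mult_D\bB W^\nu.>1$ — one gets that $(E,B_E+M_E)$ is generalized log canonical; this is the ``fibration inversion of adjunction'' contained in the canonical bundle formula. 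Second, $E$ has coefficient $1$ in $\Gamma$ and no common component with $\Gamma-E$, so the improved form of Theorem \ref{Birkar inversion of adjuction} — divisorial inversion of adjunction in Kawakita's generality, established via the minimal model program for generalized pairs — shows that $(Y,\Gamma+M_Y)$ is generalized log canonical near $E$. Finally, since $\pi^{-1}(W')=E$, this says precisely that the crepant pair $(X',B'+M')$ is generalized log canonical near $W'$. The hard part is the reverse direction, and within it the extraction of the single divisor $E$: it is this that yields $\pi^{-1}(W')=E$ and thereby makes the passage from a neighborhood of $E$ back to a neighborhood of $W'$ automatic. Because $(X',B'+M')$ is not a priori generalized log canonical near $W'$, this extraction must be justified from the weaker data that $E$ is a generalized log canonical place and that $(X',B'+M')$ is generalized log canonical at the generic point of $W'$ — or else circumvented by working on a full log resolution and propagating generalized log canonicity outward from $E$ along the connected set $\pi^{-1}(W')$. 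The hypotheses that $B'$, $M$, $M'$ be $\Q$-divisors and that $(X',B'+M')$ be projective enter precisely to make Theorem \ref{full generality theorem firbations}, and hence the fibration adjunction, available.
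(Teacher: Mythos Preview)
Your direct implication is fine and matches the paper's one-line argument via divisorial adjunction and Proposition \ref{inv adj fiber spaces}.

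For the reverse implication, however, you have correctly identified the obstruction and then not removed it. The extraction of a single generalized log canonical place $E$ over $W'$ with $\pi^{-1}(W')=E$ and $\Gamma\geq 0$ is exactly what one cannot arrange without already knowing that $(X',B'+M')$ is generalized log canonical near $W'$: if the pair is \emph{not} generalized log canonical near $W'$, then there exist valuations with generalized discrepancy strictly below $-1$ whose centers lie over $W'$, and any $\Q$-factorial model extracting $E$ alone and keeping the boundary effective would make such valuations disappear, which they do not. Your fallback suggestion of ``propagating generalized log canonicity outward along the connected set $\pi^{-1}(W')$'' on a log resolution runs into the same wall: on that resolution the boundary has components of coefficient $>1$, so the pair restricted to a neighbouring exceptional divisor is not generalized log canonical, and there is nothing to propagate. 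Finally, invoking an ``improved form of Theorem \ref{Birkar inversion of adjuction}'' in Kawakita generality for generalized pairs is circular: that is (a special case of) the very statement being proved.

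The paper takes a genuinely different route, following Hacon's strategy from \cite{Hac14}. One does \emph{not} try to extract only $E$; instead one passes to a weak generalized dlt model $(X^m,B^m+M^m)$ on which \emph{all} divisors of generalized discrepancy $\leq -1$ appear, lets $S^m$ be a chosen log canonical place over $W'$ and $\Sigma^m$ the locus carrying discrepancy $<-1$, and runs a $(\K X^m.+B^m+M^m)$-MMP over $X'$ with scaling of a suitable big divisor $L^m$. If at some step $S^m_i$ meets $\Sigma^m_i$, restriction to $S^m_i$ produces a generalized pair that is not generalized log canonical, and Proposition \ref{inv adj fiber spaces} transfers this to $W^\nu$, contradicting the hypothesis. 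If $S^m_i\cap\Sigma^m_i=\emptyset$ for every $i$, one uses Kawamata--Viehweg vanishing along the scaled MMP to show simultaneously that the restriction map $\mu_{i,*}\mathcal O_{X^m_i}(L^m_i-k\Sigma^m_i)\rar \bar\mu_{i,*}\mathcal O_{S^m_i}(L^m_i)$ is surjective and, because $W'\cap\mu_{i_0}(\Sigma^m_{i_0})\neq\emptyset$, that it is not. The contradiction closes the argument. This MMP-plus-vanishing mechanism is what replaces the single-divisor extraction you were looking for.
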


Here, the main ingredients are the MMP and Kawamata--Viehweg vanishing.
In particular, Birkar and Zhang have developed an MMP in the setup of generalized pairs \cite{BZ}, and we apply such machinery to our particular case.
In general, the statements concerning adjunction theory are proved by considering a suitable higher model of the starting variety, where the divisors carrying discrepancy at most $-1$ have (close to) simple normal crossing configuration.
Once such a convenient arrangement is reached, the negativity lemma and Kawamata--Viehweg vanishing apply.
In the first formulations, the higher model is a log resolution \cite[cf. Theorem 5.50]{KM}, and subsequently, the notion of dlt model took place \cite{Hac14}.
In this note, we introduce an appropriate generalization of the latter.

Finally, we discuss some applications of the generalized canonical bundle formula to a famous conjecture by Prokhorov and Shokurov \cite[Conjecture 7.13]{PS}.
We prove some inductive statements, which allow reducing parts of the conjecture to some particular cases. This leads to some progress towards the conjecture for fibrations of relative dimension 2.

\begin{theorem} \label{prokhorov shokurov rel dim 2}
Let $(X,B)$ be a sub-pair, with $\coeff (B) \in \Q$.
Let $f \colon X \rar Z$ be a projective surjective morphism of normal varieties with connected fibers.
Assume $\K X. + B \sim \subs \Q,f. 0$, and $(X,B)$ is klt over the generic point of $Z$.
If the geometric generic fiber $X_{\overline{\eta}}$ is a surface not isomorphic to $\pr 2.$, then the b-divisor $\bM Y.$ is b-semi-ample.
\end{theorem}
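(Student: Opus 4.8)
The plan is to prove the theorem by reducing the fibration $f\colon X\rar Z$ to relative dimension $\le 1$, where the Prokhorov--Shokurov conjecture is already known, and then transporting b-semi-ampleness back up. After replacing $(X,B)$ by a log smooth crepant model over $Z$ and passing to projective compactifications, Theorem \ref{full generality theorem firbations} shows that the moduli b-divisor $\bM Y.$ of $f$ is b-nef and $\Q$-Cartier, so only b-semi-ampleness remains. Two invariances will be used freely: $\bM Y.$, as a b-divisor, depends only on the crepant birational class of $(X,B)$ over $Z$, so $X$ may be modified birationally over $Z$ at will; and b-semi-ampleness of $\bM Y.$ may be checked after a generically finite base change $Z'\rar Z$, since in characteristic zero semi-ampleness of a $\Q$-Cartier divisor descends along finite surjective morphisms.

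The heart of the argument is to exhibit a factorization $f=h\circ g$ with $g\colon X\rar T$ and $h\colon T\rar Z$ contractions of positive, hence exactly $1$, relative dimension. By the lc-triviality hypothesis the geometric generic fiber $F=X_{\overline{\eta}}$ carries an induced sub-klt structure $(F,B_F)$ with $K_F+B_F\sim_\Q 0$; by the Enriques--Kodaira classification, $F$ falls into one of: (i) $F$ rational but not $\pr 2.$, hence admitting a fibration onto $\pr 1.$ — through a $\pr 1.$-bundle minimal model, or, when that minimal model is $\pr 2.$, through the pencil of lines at a blown-up point; (ii) $F$ ruled over a curve of positive genus, or $\kappa(F)\ge 1$, where one uses the relative Albanese, resp.\ relative Iitaka, fibration; (iii) $K_F\equiv 0$ and $F$ carries a genus-one fibration — automatic for bielliptic and Enriques surfaces, and present whenever a K3 or abelian surface has an isotropic class. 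Spreading out such a fibration over an open subset of $Z$ — after resolving $X$ over $Z$ and a finite base change of $Z$ realizing the fibration over the function field, both permitted above — and taking closures produces $T$. The sole remaining case is (iv): $F$ is a very general K3 or abelian surface with no fibration onto a curve; there $\bM Y.$ is governed by the weight one or two variation of Hodge structure of the family, and its b-semi-ampleness follows, after a finite base change producing a polarization, from the semi-ampleness of the Hodge line bundle on a projective model of the relevant period space (via Borel's extension theorem).

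In the main cases (i)--(iii) I would apply the canonical bundle formula twice. Applied to $g\colon X\rar T$ — which is lc-trivial of relative dimension one, since $\K X. + B\sim\subs \Q,f. 0$ and $f$ factors through $g$, whence $\K X. + B\sim\subs \Q,g. 0$ — it produces a generalized pair $(T,B_T+M_T)$ with $\K X. + B\sim\subs \Q,g. g^*(\K T. + B_T + M_T)$ whose moduli part $M_T$ is b-semi-ample by the relative-dimension-one case \cite{FM00} (pulled back from the $j$-line, or from $\overline{M}_{0,n}$ when the fibers are rational curves). As $g$ is a contraction, $\K T. + B_T + M_T\sim\subs \Q,h. 0$, so the generalized canonical bundle formula of Theorem \ref{full generality theorem firbations} applies to $h\colon (T,B_T+M_T)\rar Z$: its moduli b-divisor is b-nef and $\Q$-Cartier, and, $h$ having relative dimension one with its generalized moduli input $M_T$ already b-semi-ample, the relative-dimension-one case of the \emph{generalized} conjecture (the same $j$-invariant and cross-ratio computation, $M_T$ contributing only a pullback from $Z$) shows that this moduli b-divisor is b-semi-ample. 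Finally, the compatibility of the canonical bundle formula with composition of contractions identifies the discriminant and moduli b-divisors of $f$, as b-divisors over $Z$, with those of $h\colon (T,B_T+M_T)\rar Z$; hence $\bM Y.$ is b-semi-ample.

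The main obstacle is this last compatibility statement — that taking the canonical bundle formula of $f$ in one step agrees with taking it in two steps, along $g$ and then $h$. Establishing it requires passing to a common birational model over $Z$ on which all the b-divisors in play descend, matching the log canonical thresholds over codimension-one points of $Z$ computed upstairs on $X$ with those computed on $T$ (where the generalized part $M_T$ has already been absorbed), and controlling the loci over which $X\rar T$ degenerates and the auxiliary base change ramifies; it is exactly here that the \emph{generalized} canonical bundle formula of the earlier sections, and not the classical one, is indispensable, since the intermediate base $T$ carries a genuinely generalized pair structure. A secondary point is the relative-dimension-one case of the conjecture in the generalized setting needed for $h$. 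The classification step, by contrast, is soft, and it is there that the hypothesis $X_{\overline{\eta}}\not\cong\pr 2.$ is consumed: $\pr 2.$ is the one surface that — being rigid and of Picard rank one — acquires no fibration onto a lower-dimensional variety under any allowable birational modification.
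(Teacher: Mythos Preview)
Your overall strategy matches the paper's: factor $f$ through an intermediate variety so that each step has relative dimension one, apply the relative-dimension-one case of the (generalized) Prokhorov--Shokurov result twice, and invoke compatibility of the canonical bundle formula under composition to identify the resulting moduli b-divisors. You correctly single out this compatibility (the paper's Lemma \ref{lemma diagram of fibrations}) and the need for the \emph{generalized} relative-dimension-one case (the paper's Corollary \ref{generalized ps conj curves}) as the two technical pillars.

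Where you diverge is in the $\kappa(X_\eta)=0$ case. The paper observes that then $B^h=0$ and simply cites Fujino \cite{Fuj03}, which settles b-semi-ampleness for all Kodaira-dimension-zero surface fibers at once. Your subdivision into (iii) surfaces carrying an elliptic pencil and (iv) very general K3 or abelian surfaces is therefore unnecessary, and your handling of (iv)---semi-ampleness of the Hodge line bundle on a compactified period domain via Borel extension---is not a proof but a one-sentence outline of exactly what Fujino establishes; as written it is a gap, and you should cite \cite{Fuj03} rather than sketch it. (A minor point: your inclusion of $\kappa(F)\geq 1$ in case (ii) is vacuous, since $K_F+B_F\sim_\Q 0$ with $B_F\geq 0$ forces $\kappa(F)\leq 0$.) For $\kappa<0$, the paper produces the factorization more directly than your classification: it passes to the minimal resolution of the geometric generic fiber---which keeps $B^h\geq 0$, since minimal resolutions of surfaces have non-positive discrepancies---and then notes that any smooth rational or ruled surface other than $\mathbb P^2$ maps to a curve. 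This effectivity of $B^h$ after the reduction is what lets the paper feed the intermediate fibration into Theorem \ref{full generality theorem firbations} and Corollary \ref{generalized ps conj curves}; your ``log smooth crepant model'' may destroy it, so you should be explicit about which resolution you take.
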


The proof of Theorem \ref{prokhorov shokurov rel dim 2} relies on work of Shokurov and Prokhorov, who considered the case of relative dimension 1 \cite[Theorem 8.1]{PS}, and work of Fujino, who proved the statement when the fibers are surfaces of Kodaira dimension 0 \cite{Fuj03}.
Thus, excluding $\pr 2.$, we are left with considering fibrations whose geometric generic fiber, up to taking the minimal resolution, admits a morphism to a curve.
Under this condition, we are able to perform an inductive argument.

{
Under certain technical conditions, we can formulate Theorem \ref{prokhorov shokurov rel dim 2} to also address the case when the geometric generic fiber is $\pr 2.$.
In particular, if the generic fiber $(X_\eta,B_\eta)$ is not terminal, its terminalization is a pair $(X'_\eta,B'_\eta)$ such that $X'_{\overline{\eta}}$ admits a morphism to a curve, and $B'_\eta \geq 0$.
Thus, we can apply the strategy illustrated above.
For the reader's convenience, we include this alternative version of Theorem \ref{prokhorov shokurov rel dim 2} as a separate statement.}

\begin{theorem} \label{prokhorov shokurov rel dim 2 revised}
{Let $(X,B)$ be a sub-pair, with $\coeff (B) \in \Q$. Let $f \colon X \rar Z$ be a projective surjective morphism of normal varieties with connected fibers and $\dim X - \dim Z =2$.
Assume $\K X. + B \sim \subs \Q,f. 0$, and $(X,B)$ is klt but not terminal over the generic point of $Z$.
Then, the b-divisor $\bM Y.$ is b-semi-ample.}
\end{theorem}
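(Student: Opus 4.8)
The plan is to reduce this case to the situation already covered by Theorem \ref{prokhorov shokurov rel dim 2} by passing to a terminalization of the generic fiber. Concretely, I would first take a terminalization $(X',B')$ of $(X,B)$ over the generic point $\eta$ of $Z$, i.e.\ a birational contraction $\pi \colon X' \rar X$, proper over a neighborhood of $\eta$, such that $(X',B')$ is terminal over $\eta$, where $B'$ is defined by $\K X'. + B' = \pi^*(\K X. + B)$. Since $(X,B)$ is klt but not terminal over $\eta$, the discrepancies of the extracted divisors lie strictly between $-1$ and $0$, so $B' \geq 0$ over $\eta$ and moreover $B'$ has a component dominating $Z$ with positive coefficient. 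After shrinking $Z$ and spreading out, I may assume $\pi$ is a projective birational morphism over all of $Z$ and that $f' = f \circ \pi \colon X' \rar Z$ is a projective surjective morphism with connected fibers (Stein factorization changes nothing since the fibers were connected and $\pi$ is birational). By construction $\K X'. + B' \sim_{\Q,f'} 0$, and $(X',B')$ is klt (indeed terminal) over $\eta$.

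The second step is to identify the geometric generic fiber $X'_{\overline\eta}$. Since the terminalization over $\eta$ is exactly the minimal resolution when the generic fiber is a surface with Du Val singularities — and in general a small partial resolution extracting only divisors of discrepancy in $(-1,0)$ — the surface $X'_{\overline\eta}$ is a smooth-in-codimension-one (in fact, after base change, we can arrange it to be) terminal surface with $\K{X'_{\overline\eta}}. + B'_{\overline\eta} \sim_\Q 0$ and $B'_{\overline\eta} \geq 0$ a nonzero effective divisor (the horizontal part of $B'$). Hence $\K{X'_{\overline\eta}}. \sim_\Q -B'_{\overline\eta}$ is anti-effective but \emph{not} trivial, so $X'_{\overline\eta}$ cannot be an abelian surface, a K3, an Enriques surface, or a bielliptic surface, and — crucially — it is \emph{not} isomorphic to $\pr 2.$, because on $\pr 2.$ any effective $\Q$-divisor $B'$ with $\K{\pr 2.} + B' \sim_\Q 0$ forces $B'$ to be a specific divisor of degree $3$, but $X'_{\overline\eta} = \pr 2.$ would mean the terminalization was an isomorphism over $\eta$, contradicting that $(X,B)$ was not terminal there. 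Therefore $X'_{\overline\eta}$ is a surface not isomorphic to $\pr 2.$.

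The third step is to apply Theorem \ref{prokhorov shokurov rel dim 2} to $(X',B') \xrightarrow{f'} Z$: all its hypotheses are met, so the moduli b-divisor $\bM Y.$ associated to this fibration is b-semi-ample. Finally, I would check that the moduli b-divisor is a birational invariant of the fibration in the sense that it is unchanged under the birational modification $\pi$ of the total space: both fibrations $f$ and $f'$ have the same base $Z$, the same (up to $\Q$-linear equivalence) log canonical data over the generic point, and the discrepancy b-divisor $\bB Y.$ absorbs the difference, while $\bM Y.$ is defined on a common high model and depends only on the data over $Z$. This invariance is standard for the classical canonical bundle formula and transfers to the present setting because the construction of $\bM Y.$ in Theorem \ref{full generality theorem firbations} is carried out on a sufficiently high birational model of $X$ (equivalently of $X'$), where the two fibrations coincide. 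Hence the b-divisor $\bM Y.$ of the original fibration $f$ is b-semi-ample, as claimed.

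The main obstacle I anticipate is the last step: making precise and justifying that the moduli b-divisor is genuinely insensitive to replacing $(X,B)$ by its relative terminalization $(X',B')$. One must verify that the moduli part $\bM Y.$ computed from $(X,B) \to Z$ and from $(X',B') \to Z$ agree as b-divisors, which amounts to comparing the two canonical bundle formulas on a common resolution dominating both $X$ and $X'$; since $\K X'. + B' = \pi^*(\K X. + B)$, pulling both back to such a resolution gives literally the same $\Q$-divisor, so the discrepancy and moduli decompositions must coincide — but one should state this carefully, invoking that the moduli b-divisor depends only on the $\Q$-linear equivalence class of $\K X. + B$ over $Z$ together with the choice of birational model of the base, which here is fixed. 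A secondary, more routine point is the spreading-out argument guaranteeing that constructions over $\eta$ extend to a neighborhood and then (after possibly further shrinking $Z$, which does not affect b-semi-ampleness of $\bM Y.$) that all morphisms are projective with connected fibers.
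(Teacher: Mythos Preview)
Your proposal is correct and follows essentially the same route as the paper: pass to a terminalization $(X',B')$ of $(X,B)$ over the generic point, observe that $(B')^h \geq 0$ because the extracted divisors have discrepancy in $(-1,0)$, note that $X'_{\overline\eta} \rar X_{\overline\eta}$ is not an isomorphism (since $(X,B)$ was not terminal) and hence $X'_{\overline\eta} \not\cong \pr 2.$, then apply Theorem \ref{prokhorov shokurov rel dim 2}. Your worry about the invariance of $\bM Z.$ under the birational modification $\pi$ is unfounded: by definition the boundary and moduli b-divisors are computed on any sufficiently high model of the total space, and since $\K X'. + B' = \pi^*(\K X. + B)$ the two fibrations become literally identical on a common resolution, so the b-divisors agree automatically; the paper does not even mention this step. (Your aside about degree-$3$ divisors on $\pr 2.$ is irrelevant to the argument and can be dropped.)
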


After reviewing some facts about generalized pairs, we introduce the notion of {\it weak generalized dlt model}, which carries analogs to most of the good properties of dlt models \cite[cf. Definitions and Notation 1.9]{KK10}.
In Theorem \ref{generalized weak dlt model} we prove that such models exist.
Then, we switch the focus to the generalized canonical bundle formula.
Once it is established, we apply this machinery to the study of generalized adjunction and inversion of adjunction.
We conclude discussing some applications to the conjecture by Prokhorov and Shokurov.

\begin{ack*}
The author would like to thank his advisor Christopher D. Hacon for suggesting the problem, for his insightful suggestions and encouragement.
He benefited from several discussions with Joaqu\'{i}n Moraga and Roberto Svaldi.
He would also like to thank Tommaso de Fernex and Karl Schwede for helpful conversations.
He is also grateful to Dan Abramovich, Florin Ambro and Kalle Karu for answering his questions.
He would like to thank Joaqu\'{i}n Moraga for useful remarks on a draft of this work, and Jingjun Han for pointing out the relation between the main result of this work and a theorem of Chen and Zhang.
Finally, he would like to express his gratitude to the anonymous referee for the careful report and the many suggestions.
The author was partially supported by NSF research grants no: DMS-1300750, DMS-1265285 and by a grant from the Simons Foundation; Award Number: 256202.
\end{ack*}

\section{Some notions about generalized pairs}

Throughout this paper, we will work over an algebraically closed field of characteristic 0.
In this section, we review some notions about generalized pairs. To start, we recall the definition of {\it pair} and {\it generalized pair}.

\begin{defn} \label{def gpair}{\em
A \emph{generalized (sub)-pair} is the datum of a normal variety $X'$, equipped with {projective} morphisms $X \rar  X' \rar V$, where $f \colon X \rar X'$ is birational and $X$ is normal, an $\R$-(sub)-boundary $B'$, and an $\R$-Cartier divisor $M$ on $X$ which is nef over $V$ and such that $\K X'.+B'+M'$ is $\R$-Cartier, where $M' \coloneqq f_*M$.
We call $B'$ the \emph{boundary part}, and $M'$ the \emph{nef part}.}
\end{defn}

\begin{remark}{\em
We have boundary and moduli b-divisors $\bB X'.$ and $\bM X'.$ naturally associated to a generalized pair.
Their traces on a higher model $\tilde X$ are denoted by $\bB X',\tilde{X}.$ and $\bM X' ,\tilde{X}.$ respectively.
In particular, the moduli part is the $\R$-Cartier b-divisor associated to $M$, denoted by $\overline{M}$.
We say it descends to $\tilde X$ whenever $\bM X'. = \overline{\bM X', \tilde{X}.}$.}
\end{remark}

\begin{remark}{\em
We recover the usual notion of (sub)-pair in the case $X=X'$, $M=M'=0$.
Also, the variety $V$ in Definition \ref{def gpair} is introduced to work in the relative setting.
As it does not contribute to the singularities of the generalized pair, unless otherwise stated, we will consider the absolute case $V = \Spec \C$, and we will omit it in the notation.
{Notice that, if $V= \Spec(\C)$, $X$ and $X'$ are projective varieties.}}
\end{remark}

Now, consider a generalized pair $(X',B'+M')$ with data $f \colon  X \rar X' {\rar V}$ and $M$.
Fix a divisor $E$ over $X'$.
As we are free to replace $X$ with a higher model, we may assume that $E$ is a divisor on $X$ itself.
Then, we can write
$$
\K X. + B + M = f^*(\K X'. + B' + M'),
$$
where $B$ is implicitly defined by the above equation and the choice $f_* \K X. = \K X'.$.
Then, the {\it generalized discrepancy} $a_E(X',B'+M')$ of $E$ with respect to $(X',B'+M')$ is $-b_E$, where $b_E$ is the coefficient of $E$ in $B$.
We say that $(X',B'+M')$ is {\it generalized log canonical}, in short {\it glc}, (respectively {\it generalized Kawamata log terminal}, in short {\it gklt}) if $a_E(X',B'+M') \geq -1$ (respectively $a_E(X',B'+M') > -1$) for any such $E$.

A subvariety $W' \subset X'$ is called \emph{generalized non-klt center} if there is a log resolution of $(X',B'+M')$ where $M$ descends, which we may assume to be $f \colon X \rar X'$ itself, such that $B = \sum b_i B_i$ and $\max \lbrace b_i | f(B_i) = W' \rbrace \geq 1$.
We say $W'$ is a \emph{generalized log canonical center} if $\max \lbrace b_i | f(B_i) = W' \rbrace = 1$.
In this situation, $(X',B'+M')$ is generalized log canonical in a neighborhood of the generic point of $W'$ \cite[cf. Proposition 17.1.1]{K92}.
Any divisor $E$ with $a_E(X',B'+M') \leq -1$ dominating a generalized non-klt (log canonical) center $W'$ is called \emph{generalized non-klt (log canonical) place}.
We say that $W'$ is an \emph{exceptional generalized log canonical center} if it is a generalized log canonical center admitting just one generalized log canonical place $E \subs W'.$ and such that the image of any other generalized non-klt place is disjoint from $W'$.

We say that $(X',B'+M')$ is \emph{generalized dlt} if $(X',B')$ is dlt, and every generalized non-klt center of $(X',B'+M')$ is a non-klt center of $(X',B')$.
If, in addition, every connected component of $\lfloor B' \rfloor$ is irreducible, we say $(X',B'+M')$ is \emph{generalized plt}.
Notice that a generalized pair might be generalized dlt but not generalized log canonical, as the moduli part may introduce deeper singularities over higher codimensional strata of $\lfloor B' \rfloor$.
On the other hand, if $(X',B'+M')$ is generalized plt, then it is generalized log canonical.

\begin{remark}{\em
In the case of usual pairs, i.e., $X=X'$, $M=M'=0$, the notion of generalized discrepancy recovers the classic notion of discrepancy and the corresponding measures of singularities.}
\end{remark}

{Let $(X',B'+M')$ be a generalized pair with data $X' \rar X \rar V$ and $M$.
Let $D'$ be an effective $\R$-divisor on $X'$ and $N$ an $\R$-Cartier divisor on $X$ that is nef over $V$.
Further, assume that $D'+N'$ is $\R$-Cartier, where $N'$ denotes the pushforward of $N$ to $X'$.
Then, the \emph{generalized log canonical threshold of $D'+N'$ with respect to $(X',B'+M')$} is defined as}
$$
{\rm glct}(\K X'.+B'+M'; D'+N') \coloneqq \sup\{t \mid \K X'.+B'+M'+t(D'+N')\text{ is glc} \},
$$
{where $(X',B'+M'+t(D'+N'))$ is considered as a generalized pair with boundary part $B'+tP'$
and moduli part $M'+tN'$. 
If the above set is empty, then we define the generalized log canonical threshold to be $-\infty$.
Observe that ${\rm glct}(\K X'.+B'+M'; D'+N')$ is non-negative provided that $\K X'.+B'+M'$ is generalized log canonical.
Moreover, ${\rm glct}(\K X'.+B'+M'; D'+N')$ is infinite if and only if $N$ descends on $X'$ and $D'$ is trivial.
}

We can now review the notion of generalized adjunction, first introduced in \cite{BZ}.
Fix a generalized pair $(X',B'+M')$ with data $f \colon  X \rar  X' {\rar V}$ and $M$.
Let $S'$ be an irreducible component of $B'$ of coefficient one, and denote by $S^\nu$ its normalization.
Up to replacing $X$ with a higher model, we may also assume that $X$ is a log resolution of $(X',B')$.
Denote by $g  \colon  S \rar S^\nu$ the induced morphism, where $S$ represents the strict transform of $S'$ on $X$.

As usual, we write
$$
\K X. + B + M = f^* (\K X'. + B' +M').
$$
Then, we set
$$
\K S. + B_S + M_S \coloneqq (\K X. + B +M)|_S,
$$
where $B_S \coloneqq (B-S)|_S$ and $M_S \coloneqq M|_S$. 
Define $B_{S^\nu} \coloneqq g_* B_S$, and $M_{S^\nu} \coloneqq g_* M_S$. By construction, we get
$$
\K S^\nu. + B_{S^\nu} + M_{S^\nu} = (\K X'. + B' + M')|_{S^\nu}.
$$
We refer to such operation as {\it generalized divisorial adjunction}.
As discussed in \cite[Definition 4.7]{BZ}, in the case $(X',B'+M')$ is generalized log canonical, the divisor $B \subs S^\nu.$ is effective on $S^\nu$, and therefore $(S^\nu,B \subs S^\nu. + M \subs S^\nu.)$ is a generalized pair with data $g \colon  S \rar S^\nu$ and $M_S$.

As mentioned in the introduction, generalized pairs arise naturally in the context of the canonical bundle formula.
Such construction was first introduced by Kawamata \cite{Kaw98}, and then widely studied by Ambro \cite{Amb99}, and Fujino and Mori \cite{FM00}.
We will just recall the main features of the so-called {\it adjunction for fiber spaces}, and refer to \cite{Bir16a} for a more complete exposition.

Let $(X,B)$ be a sub-pair, and let $f \colon  X \rar Z$ be a \emph{contraction} (i.e., a projective morphism such that $f_* \O X. = \O Z.$), where $\dim Z > 0$.
Assume that $(X,B)$ is sub-log canonical near the generic fiber of $f$, and that $\K X. + B \sim_{\R,f} 0$.
For each prime divisor $D$ on $Z$, let $t_D$ be the log canonical threshold of $f^*D$ with respect to $(X,B)$ over the generic point of $D$.
As $Z$ is normal, it is smooth along the generic point $\eta_D$ of $D$; therefore, $f^*D$ is well defined in a neighborhood of $\eta_D$, and the definition of $t_D$ is well posed.

Then, set $B_Z \coloneqq \sum b_D D$, where $b_D \coloneqq 1 - t_D$.
Notice that $b_D = 0$ for all but finitely many prime divisors on $Z$.
By assumption, we can find an $\R$-Cartier divisor $L_Z$ such that $\K X. + B \sim_\R f^* L_Z$.
Define $M_Z \coloneqq L_Z - (\K Z. + B_Z)$.
Thus, we have
$$
\K X. + B \sim_\R f^*(\K Z. + B_Z + M_Z).
$$
As $L_Z$ is defined just up to $\R$-linear equivalence, so is $M_Z$.
On the other hand, $B_Z$ is an honest $\R$-divisor on $Z$.

Taking higher models $X'$ and $Z'$ of $X$ and $Z$ respectively, one can induce divisors $B_{Z'}$ and $M_{Z'}$ on $Z'$.
These agree with $B_Z$ and $M_Z$ under pushforward, thus defining Weil b-divisors $\bB Z.$ and $\bM Z.$.
In particular, if $Z'$ is sufficiently high, $M_{Z'}$ is pseudoeffective.
Furthermore, under certain natural conditions, the b-divisor $\bM Z.$ is a b-nef $\Q$-Cartier b-divisor \cite[cf. Theorem 3.6]{Bir16a}.

\section{Weak generalized dlt models}

In this section, we introduce suitable modifications of a given generalized pair.
In order to do so, we need to recall the corresponding construction in the case of usual pairs.
We refer to \cite{KK10} for a more detailed discussion of the topic.
{The results of this sections hold for arbitrary generalized pairs $(X',B'+M')$ with data $X \rar X' \rar V$ and $M$, without the assumption $V= \Spec(\C)$.}

Let $(X,\Delta)$ be a pair, and let $f^m \colon X^m \rar X$ be a proper birational morphism whose exceptional locus $\Ex (f^m)$ is purely divisorial.
Let $\lbrace E_i \rbrace \subs i = 1. ^n$ denote the set of irreducible exceptional divisors, and let $\lbrace a_i \rbrace \subs i = 1. ^n$ denote the corresponding discrepancies.
Define $\Delta^m \coloneqq (f^m)_* \sups -1.(\Delta \wedge \Supp(\Delta)) + \sum \subs a_i \leq -1. E_i$, where the symbol $\wedge$ denotes the following operation.
Given two divisors $D_1= \sum_{i=1}^n d_i P_i$ and $D_2 = \sum_{i=1}^n e_i P_i$, we define $D_1 \wedge D_2 \coloneqq \sum_{i=1}^n \min \lbrace d_i,e_i \rbrace P_i$.
Then, $(X^m,\Delta^m)$ is a {\it minimal dlt model} of $(X,\Delta)$ if it is a dlt pair and the discrepancy of every $f^m$-exceptional divisor is at most $-1$. The existence of such models is due to Hacon.

\begin{theorem}[{\cite[Theorem 3.1]{KK10}}] \label{dlt model Hacon}
Let $(X,\Delta)$ be a pair such that $X$ is quasi-projective, $\Delta$ a boundary, and $\K X. + \Delta$ a $\Q$-Cartier divisor.
Then $(X,\Delta)$ admits a $\Q$-factorial minimal dlt model $f^m  \colon  (X^m,\Delta^m) \rar (X, \Delta)$.
\end{theorem}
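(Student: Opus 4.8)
The plan is to reduce to the log smooth case by a resolution and then run a relative minimal model program that contracts precisely the exceptional divisors of discrepancy greater than $-1$. Concretely, I would first fix a log resolution $g \colon Y \rar X$ of $(X,\Delta)$ with $\Ex(g)$ a divisor and $\Ex(g) \cup \Supp(g_*^{-1}\Delta)$ simple normal crossing; write $E_1, \dots, E_n$ for the $g$-exceptional prime divisors and $a_i \coloneqq a_{E_i}(X,\Delta)$ for their discrepancies. Since $\Delta$ is a boundary, $\Delta \wedge \Supp(\Delta) = \Delta$, so I would set $\Gamma \coloneqq g_*^{-1}\Delta + \sum_{i=1}^n E_i$; this divisor is log smooth, hence $(Y,\Gamma)$ is $\Q$-factorial dlt, and a local computation gives
$$
\K Y. + \Gamma = g^*(\K X. + \Delta) + \sum_{i=1}^n (1+a_i) E_i .
$$
In particular, over $X$ one has $\K Y. + \Gamma \equiv_X P - N$, where $P \coloneqq \sum_{a_i > -1}(1+a_i)E_i \geq 0$ is supported on the $E_i$ with $a_i > -1$ and $N \coloneqq \sum_{a_i < -1}(-1-a_i)E_i \geq 0$ on those with $a_i < -1$, the two having no common component.

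Next I would run a $(\K Y. + \Gamma)$-MMP over $X$, say with scaling of a $g$-ample divisor. Each step is $(\K Y. + \Gamma)$-negative over $X$, hence negative against $P - N$, and by the negativity lemma over $X$ no step affects a component of $N$ while the only divisors it can contract are those with $a_i > -1$; thus the divisors with $a_i \leq -1$ survive throughout the program. Termination would follow from the minimal model program for klt pairs of Birkar, Cascini, Hacon and McKernan (equivalently, from the termination results underlying the construction of $\Q$-factorial dlt modifications), and both $\Q$-factoriality and the dlt property persist along the MMP. Writing $f^m \colon X^m \rar X$ for the resulting model and recalling that each surviving $g$-exceptional $E_i$ has coefficient $1$ in $\Gamma$, the strict transform of $\Gamma$ equals $\Delta^m = (f^m)_*^{-1}(\Delta) + \sum_{a_i \leq -1} E_i$, and $(X^m,\Delta^m)$ is $\Q$-factorial dlt.

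To finish, I would verify the discrepancy condition. Writing $\K {X^m}. + \Delta^m = (f^m)^*(\K X. + \Delta) + F^m$ with $F^m$ supported on the exceptional locus of $f^m$, and using that $F^m \equiv_X \K {X^m}. + \Delta^m$ is nef over $X$ at the end of the MMP, the negativity lemma forces $F^m \leq 0$; together with the fact that every $f^m$-exceptional divisor occurring in $\Delta^m$ has coefficient exactly $1$, this yields $a_E(X^m,\Delta^m) \leq -1$ for every $f^m$-exceptional $E$, which is precisely what a minimal dlt model requires. The step I expect to be the main obstacle is the one carried out in the second paragraph: ensuring that the relative MMP terminates (this is where the hard input from the minimal model program enters, and where the case in which $(X,\Delta)$ is not log canonical demands extra care) and that it contracts exactly the divisors of discrepancy $> -1$ while leaving those of discrepancy $\leq -1$ with coefficient $1$. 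Once this bookkeeping is pinned down via a careful use of the negativity lemma, the rest of the argument is formal.
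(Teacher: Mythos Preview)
Your overall architecture matches the paper's (resolve, add the reduced exceptional divisor to the boundary, run an MMP over $X$, then use the negativity lemma to check that only divisors of discrepancy $>-1$ have been contracted). The paper does not prove Theorem \ref{dlt model Hacon} separately, but the proof of its generalization, Theorem \ref{generalized weak dlt model}, specializes to exactly this argument when $M=0$.

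There is, however, a genuine gap at the point you yourself flag: termination. You write that ``termination would follow from the minimal model program for klt pairs of Birkar, Cascini, Hacon and McKernan'', but the pair $(Y,\Gamma)$ you run the MMP on is only dlt, not klt, since every exceptional divisor appears in $\Gamma$ with coefficient $1$. BCHM does not directly terminate a dlt MMP, and appealing to ``the termination results underlying the construction of $\Q$-factorial dlt modifications'' is circular, since that is precisely the statement being proved. The key device you are missing is the perturbation trick used in the proof of Theorem \ref{generalized weak dlt model}: one chooses the resolution so that there is an effective $f$-exceptional divisor $C$ with $-C$ relatively ample, and then rewrites (see equation \eqref{equation with epsilon mu nu})
\[
E + (1+\nu)F + \mu(-C+f^*H') \;=\; (1-\epsilon\mu)E + (1+\nu)F + \mu(\epsilon E - C + f^*H'),
\]
absorbing a small multiple of the reduced divisor $E$ into an ample part. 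The right-hand side gives a \emph{klt} pair to which \cite{BCHM} applies and produces a minimal model over $X$; by the displayed identity this is simultaneously a minimal model for the dlt pair on the left, so the dlt property is inherited. Without this (or an equivalent) reduction to the klt case, your MMP has no reason to stop.

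Two smaller points. First, your sentence ``by the negativity lemma over $X$ no step affects a component of $N$'' is not what the negativity lemma says; the correct bookkeeping is done only at the end, exactly as in your third paragraph and as in the paper (the divisor $D^m_{\epsilon,\mu,\nu}$ there). Second, in your last paragraph you write $a_E(X^m,\Delta^m)\le -1$, but the quantity you actually computed (and the one the definition requires) is $a_E(X,\Delta)\le -1$; your identity $F^m=\sum (1+a_i)E_i$ with $a_i=a_{E_i}(X,\Delta)$ already gives this.
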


In the setup of generalized pairs, we prove the following, which is a generalization of Theorem \ref{dlt model Hacon}.

\begin{theorem} \label{generalized weak dlt model}
Let $(X',B'+M')$ be a generalized pair with data $X \rar X' { \rar V}$ and $M$.
Then, there exists a $\Q$-factorial model $f^m \colon  X^m \rar X'$ such that every $f^m$-exceptional divisor has generalized discrepancy with respect to $(X',B'+M')$ at most $-1$.
Furthermore, the pair $(X^m,B^m)$ is dlt, where $B^m \coloneqq (f^m)^{-1}_* (B \wedge \Supp(B)) + E^m$, and $E^m$ denotes the reduced $f^m$-exceptional divisor.
\end{theorem}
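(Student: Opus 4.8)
The plan is to imitate the construction of a minimal dlt model (Theorem~\ref{dlt model Hacon}): one builds an auxiliary $\Q$-factorial generalized dlt pair on a log resolution, runs a relative minimal model program for it using the generalized MMP of Birkar--Zhang~\cite{BZ}, and reads off the required properties from the negativity lemma. First I would reduce to the case where $f\colon X\rar X'$ is a log resolution of $(X',B')$ to which $\overline M$ descends; this is harmless, since passing to a higher model of $X$ does not affect the statement (the divisor $(f^m)^{-1}_*(B\wedge\Supp(B))$ depends only on $(X',B')$: its pushforward to $X'$ is $B'$, so it equals $(f^m)^{-1}_* B'$). Write $E$ for the reduced $f$-exceptional divisor and put $\Gamma:=f^{-1}_* B' + E$. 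Then $(X,\Gamma)$ is log smooth and, since $\overline M$ descends to $X$, every generalized non-klt center of $(X,\Gamma+M)$ is a non-klt center of $(X,\Gamma)$; hence $(X,\Gamma+M)$ is a $\Q$-factorial generalized dlt pair. Writing $\K X. + B + M = f^*(\K X'. + B' + M')$ as usual, one gets
$$
\K X. + \Gamma + M = f^*(\K X'. + B' + M') + \Lambda, \qquad \Lambda := \sum_i (1+a_i) E_i,
$$
where the $E_i$ are the $f$-exceptional prime divisors, $a_i := a_{E_i}(X',B'+M')$, and I decompose $\Lambda = \Lambda^+ - \Lambda^-$ into its positive and negative parts, which have disjoint support: $\Supp(\Lambda^+)=\{E_i : a_i > -1\}$ and $\Supp(\Lambda^-)=\{E_i : a_i < -1\}$, both $f$-exceptional.

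Next I would run a minimal model program over $X'$ for $(X,\Gamma+M)$ with scaling of an ample divisor; this exists by~\cite{BZ}. Since $f$ is birational there is no Mori fiber space over $X'$ (a dimension count), so the program terminates with a minimal model $f^m\colon X^m\rar X'$, i.e. $\K X^m. + \Gamma^m + M^m$ is nef over $X'$, where $\phi\colon X\drar X^m$ denotes the resulting birational contraction, $\Gamma^m:=\phi_*\Gamma$ and $M^m:=\phi_*M$. As $\K X. + \Gamma + M \sim_{\R,X'} \Lambda$ is supported on $\Ex(f)$, every step of this program is an isomorphism over $X'\setminus f(\Ex(f))$, a set whose complement has codimension $\geq 2$; hence $\phi$ contracts only $f$-exceptional divisors, in particular no component of $f^{-1}_* B'$, so $\Gamma^m = (f^m)^{-1}_* B' + E^m = (f^m)^{-1}_*(B\wedge\Supp(B)) + E^m = B^m$, with $E^m$ the reduced $f^m$-exceptional divisor. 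Since each step preserves $\Q$-factoriality and the generalized dlt property, $X^m$ is $\Q$-factorial and $(X^m,B^m)$ is dlt.

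Finally, pushing the displayed identity forward by $\phi$ gives $\K X^m. + \Gamma^m + M^m = (f^m)^*(\K X'. + B' + M') + \Lambda_m$ with $\Lambda_m := \phi_*\Lambda = \Lambda_m^+ - \Lambda_m^-$. Then $\Lambda_m \equiv_{X'} \K X^m. + \Gamma^m + M^m$ is nef over $X'$ and exceptional over $X'$, so the negativity lemma forces $\Lambda_m \leq 0$; as $\Lambda_m^+$ and $\Lambda_m^-$ have disjoint support this gives $\Lambda_m^+ = 0$, i.e. $\phi$ contracts every component of $\Supp(\Lambda^+)$. Therefore every $f^m$-exceptional divisor is the strict transform of some $E_i$ with $a_i \leq -1$, and so has generalized discrepancy at most $-1$ with respect to $(X',B'+M')$, as desired.

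The step I expect to be the main obstacle is the termination of this program over $X'$: a priori $(X,\Gamma+M)$ need not be pseudoeffective over $X'$ --- which is precisely the case when $(X',B'+M')$ is not generalized log canonical --- so one cannot simply invoke a good minimal model theorem. What rescues the argument is the absence of Mori fiber spaces over the birational base $X'$ together with special termination for generalized dlt pairs, which after finitely many steps keeps the flipping loci away from $\lfloor\Gamma\rfloor \supseteq \Supp(\Lambda^+)$; one must verify, using~\cite{BZ}, that these inputs do force all of $\Supp(\Lambda^+)$ to be contracted after finitely many steps --- and if convenient one may simply halt the program once this holds, which is harmless since every intermediate model is $\Q$-factorial and dlt.
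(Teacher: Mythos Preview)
Your route differs from the paper's. The paper never runs a generalized MMP: instead it absorbs $M$ into an ample class. Taking $H'$ sufficiently ample on $X'$ and an $f$-exceptional $C$ with $-C$ relatively ample, for small $\epsilon,\mu>0$ the class $\mu(\epsilon E - C + f^*H') + M$ is ample and may be traded for a general divisor $H_{2,\mu}$. This yields an ordinary \emph{klt} pair on $X$ whose relative minimal model over $X'$ exists by \cite{BCHM}; by a simple identity the same run is simultaneously an MMP for a dlt pair with reduced part $E$, so dlt-ness survives automatically, and a negativity computation with parameters $0<\mu\ll\nu\ll 1$ then forces the exceptional divisors of generalized discrepancy $>-1$ to be contracted.

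Your direct approach is natural, and you correctly flag termination as the obstacle, but the fix you sketch is not sufficient. Special termination only says that after finitely many steps the flipping loci avoid $\lfloor\Gamma\rfloor$; it does not by itself force any component of $\Lambda^+$ to be \emph{contracted} --- the remaining flips could go on forever away from $\lfloor\Gamma\rfloor$ with $\Lambda^+$ untouched. What does work (and is the argument the paper uses in the closely related Proposition~\ref{weak generalized dlt model with extraction}) is: either the scaling numbers $t_i$ stay bounded away from $0$, in which case the run is an MMP for an ordinary klt pair and terminates by \cite{BCHM}; or $t_i\to 0$, in which case on any sufficiently late model $X_N$ the class $\K X_N. + \Gamma_N + M_N$ lies in the closure of the movable cone over $X'$, and \cite[Lemma~3.3]{Bir12} forces the exceptional divisor $\Lambda_N^+$ to vanish. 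There is a second, related gap: your claim that each step preserves the dlt property of $(X_i,\Gamma_i)$ is not automatic, since after the first step the trace $M_i$ need not be nef, so a $(\K X_i. + \Gamma_i + M_i)$-negative ray need not be $(\K X_i. + \Gamma_i)$-negative. Once you know only finitely many steps are needed this is easy --- pick $0<\sigma$ below all scaling numbers used and replace the ample class $M+\sigma A$ on $X$ by a general boundary $H$; then every step is also $(\K X. + \Gamma + H)$-negative and \cite[Corollary~3.44]{KM} applies --- but note that this is precisely the paper's absorption trick entering through the back door.
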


\begin{remark}{\em
The construction in Theorem \ref{generalized weak dlt model} produces a generalized pair $(X^m,B^m +M^m)$ with data $X \rar X^m {\rar V}$ and $M$ \footnote{In general, we need to replace the $X$ and $M$ appearing in the statement of the theorem with higher models for $X \rar X^m$ to be a morphism. This is clear from the proof.}.
By construction, the singularities of $(X^m,B^m +M^m)$ are milder than the ones of $(X',B'+M')$.
Nevertheless, the construction does not guarantee that $(X^m,B^m +M^m)$ is generalized dlt.
Therefore, we call $(X^m,B^m +M^m)$ a {\it{weak generalized dlt model}} for $(X',B'+M')$.}
\end{remark}

\begin{proof}
Let $f \colon  X \rar X'$ be a log resolution of $(X,B)$ where $M$ descends.
Also, assume it is obtained by blowing up loci of codimension at least two.
In this way, there exists an effective and $f$-exceptional divisor $C$ such that $-C$ is $f$-ample.

Set $\lbrace B' \rbrace \coloneqq B' - \lfloor B' \rfloor$, and define $B$ via the identity
$$
\K X. + B + M = f^*(\K X'. + B' + M' ).
$$
Then, we can decompose $B$ as $B=f_*^{-1}\lbrace B' \rbrace + E^+ + F - G$, where $E^+$ denotes the (non necessarily $f$-exceptional) divisors with generalized discrepancy at most $-1$, $F$ the sum of all $f$-exceptional divisors with generalized discrepancy in $(-1,0]$, and $G$ the sum of all $f$-exceptional divisors with positive generalized discrepancy.
Also, define $E \coloneqq \red E^+$.

Let $H'$ be a sufficiently ample divisor on $X'$.
Then, for all $\epsilon, \mu, \nu \in \R$, we have
\begin{equation} \label{equation with epsilon mu nu}
E + (1 + \nu)F + \mu(-C+f^*H')+M = (1-\epsilon \mu)E + (1+ \nu)F + \mu(\epsilon E - C + f^*H') + M.
\end{equation}

If $0< \epsilon \ll 1$ and $\mu > 0$, both $\mu(-C+f^*H')+M$ and $\mu(\epsilon E - C + f^*H') + M$ are ample {over $X'$}, hence $\R$-linearly equivalent {over $X'$} to divisors $H_{1,\mu}$ and $H_{2,\mu}$ such that $B+H_{1,\mu}+H_{2,\mu}$ has simple normal crossing support, and $\lfloor H_{1,\mu} \rfloor = \lfloor H_{2,\mu} \rfloor =0$.

Thus, if $0 < \mu <1 $ and $0 < \nu \ll1$, the pair
$$
(X,f^{-1}_*\lbrace B' \rbrace+(1 - \epsilon \mu)E + (1 + \nu)F + H_{2,\mu})
$$
is klt.
By \cite{BCHM}, it has a $\Q$-factorial minimal model over $X'$
$$
f^m_{\epsilon,\mu,\nu}  \colon  (X^m_{\epsilon,\mu,\nu}, \Delta^m_{\epsilon,\mu,\nu}) \rar X'.
$$
In virtue of identity \eqref{equation with epsilon mu nu}, $f^m_{\epsilon,\mu,\nu}$ is also a minimal model for the pair
$$
(X,f^{-1}_*\lbrace B' \rbrace+ E + (1 + \nu)F + H_{1,\mu}).
$$
As the dlt property is preserved under steps of the MMP \cite[Corollary 3.44]{KM}, the resulting model is dlt as well.
Hence, the pair $(X^m_{\epsilon,\mu,\nu},B^m_{\epsilon,\mu,\nu})$ is dlt, where $B^m_{\epsilon,\mu,\nu}$ denotes the strict transform of $f^{-1}_* \lbrace B' \rbrace +E +F$ on $X^m_{\epsilon,\mu,\nu}$.

Now, let $\Gamma_{\epsilon,\mu,\nu}^m$ be the strict transform on $X_{\epsilon,\mu,\nu}^m$ of any other divisor $\Gamma$ on $X$.
Then, define
$$
N \coloneqq \K X_{\epsilon,\mu,\nu}^m. + B^m_{\epsilon,\mu,\nu} + \nu F_{\epsilon,\mu,\nu}^m + H^m_{1,\epsilon,\mu,\nu} \sim_\R \K X_{\epsilon,\mu,\nu}^m. + \Delta^m_{\epsilon,\mu,\nu},
$$
and
$$
T \coloneqq \K X_{\epsilon,\mu,\nu}^m. + B^m_{\epsilon,\mu,\nu} + (E^+-E)_{\epsilon,\mu,\nu}^m - G^m_{\epsilon,\mu,\nu} + M^m_{\epsilon,\mu,\nu} \sim_\R (f^m_{\epsilon,\mu,\nu})^* ( \K X'. + B' + M').
$$
The first one is $f^m_{\epsilon,\mu,\nu}$-nef, while the latter one is $f^m_{\epsilon,\mu,\nu}$-trivial.
Their difference can be written as
$$
T - N \sim_{\R,f^m_{\epsilon,\mu,\nu}} \mu C^m + (E^+-E)^m_{\epsilon,\mu,\nu} - G^m_{\epsilon,\mu,\nu} - \nu F^m_{\epsilon,\mu,\nu} \eqqcolon D^m_{\epsilon,\mu,\nu}.
$$
In particular, $-D^m_{\epsilon,\mu,\nu}$ is $f_{\epsilon,\mu,\nu}^m$-nef and $f_{\epsilon,\mu,\nu}^m$ exceptional.
Therefore, by the negativity lemma \cite[Lemma 3.39]{KM}, $D_{\epsilon,\mu,\nu}^m$ is effective.

As $C$, $E^+-E$, $F$ and $G$ are independent of ${\epsilon,\mu,\nu}$, if we choose $0 < \mu \ll \nu \ll 1$, both $G^m_{\epsilon,\mu,\nu}$ and $\nu F^m_{\epsilon,\mu,\nu}$ vanish \footnote{As $\mu \ll \nu$, the contribution of $\mu C^m$ is negligible in order to determine effective and anti-effective parts of $D^m \subs \epsilon,\mu,\nu.$. Since $D^m \subs \epsilon,\mu,\nu. \geq 0$, then $G^m_{\epsilon,\mu,\nu}$ and $ F^m_{\epsilon,\mu,\nu}$ are forced to be $0$.}, as $F$ and $G$ are contracted by the MMP.
Thus, we perform such a choice of coefficients, and we drop the dependence from ${\epsilon,\mu,\nu}$ in our notation.
Then, the generalized pair $(X^m,B^m+M^m)$ with data $X { \rar X^m \rar V}$ and $M$ satisfies the claimed conditions.
\end{proof}

In the case the input of Theorem \ref{generalized weak dlt model} is generalized log canonical, then the generalized pair $(X^m,B^m+M^m)$ is the crepant pullback of $(X',B'+M')$, and is therefore generalized log canonical.
Thus, we can talk about {\it{generalized dlt model}}, and we make this definition more precise with the following statement.

\begin{corollary}[{\cite[cf. 2.6.(2)]{Bir16a}}] \label{generalized dlt model}
Let $(X',B'+M')$ be a generalized pair with data $X \rar X' { \rar V}$ and $M$. 
Assume $(X',B'+M')$ is generalized log canonical.
Then $(X',B'+M')$ admits a $\Q$-factorial weak generalized dlt model $(X^m,B^m+M^m)$ such that $\K X'. + B' +M'=(f \sups m.)^*(\K X'. + B' + M')$.
We will call $(X^m,B^m+M^m)$ a {\it{generalized dlt model}} for $(X',B'+M')$.
\end{corollary}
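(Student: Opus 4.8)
The plan is to derive the statement directly from Theorem~\ref{generalized weak dlt model}. Applying that theorem to $(X',B'+M')$ produces a $\Q$-factorial model $f^m\colon X^m\rar X'$ and the associated weak generalized dlt model $(X^m,B^m+M^m)$, with $(X^m,B^m)$ dlt and every $f^m$-exceptional divisor of generalized discrepancy at most $-1$. The only additional point to verify is the crepancy relation $\K X^m.+B^m+M^m=(f^m)^*(\K X'.+B'+M')$; granting it, the generalized log canonicity of $(X^m,B^m+M^m)$ is immediate, because crepant pullbacks preserve generalized discrepancies and $(X',B'+M')$ is glc, and for the same reason every $f^m$-exceptional divisor then has generalized discrepancy exactly $-1$ (it is $\le -1$ by construction and $\ge -1$ by glc).

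To establish crepancy I would run through the construction in the proof of Theorem~\ref{generalized weak dlt model} under the extra hypothesis. Because $(X',B'+M')$ is glc, every coefficient of the discrepancy divisor $B$ on the log resolution is $\le 1$; hence $B\wedge\Supp(B)=B$, and in the decomposition $B=f^{-1}_*\lbrace B'\rbrace+E^+ +F-G$ the divisor $E^+$ is already reduced, so $E^+=E$. Since the parameters there were chosen so that $F$ and $G$ are contracted by the MMP, the correction terms $(E^+-E)^m$, $G^m$ and $F^m$ all vanish, and the divisor $T$ of that proof collapses to $T=\K X^m.+B^m+M^m$; as $T=(f^m)^*(\K X'.+B'+M')$ (once compatible canonical divisors are fixed, the $\sim_\R$ there is an honest equality of Weil divisors), this is precisely the crepancy sought. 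One may instead check it divisor by divisor: on an $f^m$-exceptional divisor, $B^m$ has coefficient $1$ (it lies in the reduced exceptional divisor $E^m$), matching the crepant coefficient $-a_E(X',B'+M')$, which equals $1$ by the discrepancy computation above; on a non-exceptional prime divisor $D$ with image $D'$ on $X'$, taking coefficients in the crepant equation the canonical parts cancel and the moduli contributions $\mathrm{coeff}_D(M^m)$ and $\mathrm{coeff}_{D'}(M')$ cancel as well, since both equal $\mathrm{coeff}_{\tilde D}(M)$ for $\tilde D$ the common strict transform on $X$, leaving the crepant coefficient equal to $\mathrm{coeff}_{D'}(B')$, which is again the coefficient prescribed by $B^m=(f^m)^{-1}_*(B\wedge\Supp(B))+E^m$.

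The step I expect to be the most delicate is this bookkeeping identification of $B^m$ with the crepant boundary: one has to confirm that the truncation $B\wedge\Supp(B)$ is inert on the glc locus, that the $f^m$-exceptional part of $B^m$ is exactly the reduced exceptional divisor with no leftover contributions from $F$ or $G$, and that the relation obtained at the end of the proof of Theorem~\ref{generalized weak dlt model} is genuinely an equality of divisors rather than only an $\R$-linear equivalence. None of this is deep, but it is exactly where a careless argument would slip.
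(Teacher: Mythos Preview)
Your proposal is correct and follows exactly the approach the paper takes: the paper does not give a separate proof of the corollary, but simply observes in the paragraph preceding it that when the input of Theorem~\ref{generalized weak dlt model} is generalized log canonical, the resulting weak generalized dlt model is the crepant pullback of $(X',B'+M')$. Your detailed unpacking of why $E^+=E$, why $F^m=G^m=0$, and hence why $T=\K X^m.+B^m+M^m$ under the glc hypothesis is precisely the content behind that one-line observation, so there is nothing to add.
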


In some situations, it is useful to extract certain divisors on a weak generalized dlt model.
The following proposition makes this precise \cite[cf. Lemma 4.5]{BZ}.

\begin{proposition} \label{weak generalized dlt model with extraction}
Let $(X',B'+M')$ be a generalized pair with data $X \rar X' {\rar V}$ and $M$.
Let $W' \subset X'$ be a generalized log canonical center of $(X',B'+M')$, and let $P$ be a generalized log canonical place with center $W'$.
Then, there exists a weak generalized dlt model $(X^m,B^m+M^m)$ such that $P$ is a divisor on $X^m$.
\end{proposition}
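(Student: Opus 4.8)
The plan is to combine the existence of weak generalized dlt models (Theorem~\ref{generalized weak dlt model}) with an explicit divisor-extraction argument via the MMP, closely mirroring the proof of \cite[Lemma 4.5]{BZ}. The key point is that the weak generalized dlt model produced by Theorem~\ref{generalized weak dlt model} retains all divisors of generalized discrepancy $\leq -1$ that are ``needed'' for the dlt structure, but a priori it need not contain the specific generalized log canonical place $P$ with center $W'$, since $P$ may have generalized discrepancy exactly $-1$ and still be contracted. So I need to force $P$ to appear.

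First I would take a log resolution $f\colon X\rar X'$ of $(X,B)$ on which $M$ descends and on which $P$ is a divisor; as in the proof of Theorem~\ref{generalized weak dlt model} I may assume $f$ is obtained by blowing up loci of codimension $\geq 2$, so there is an effective $f$-exceptional divisor $C$ with $-C$ being $f$-ample. Write $\K X.+B+M=f^*(\K X'.+B'+M')$, and decompose $B$ as in that proof into the strict transform of $\{B'\}$, the part $E^+$ of generalized discrepancy $\leq -1$, the part $F$ of generalized discrepancy in $(-1,0]$, and $-G$ where $G$ collects the divisors of positive generalized discrepancy. Since $P$ is a generalized log canonical place with center $W'$, we have $a_P(X',B'+M')=-1$, so $P$ appears in $E^+$ with coefficient exactly $1$, i.e.\ $P\leq E:=\red E^+$.

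Now I would run essentially the same MMP as in Theorem~\ref{generalized weak dlt model}, but making sure $P$ is not contracted. The way to do this is to perturb the boundary so that $P$ acquires coefficient strictly greater than the threshold forcing contraction: instead of running the MMP on $(X,f^{-1}_*\{B'\}+(1-\epsilon\mu)E+(1+\nu)F+H_{2,\mu})$, I replace the coefficient of $P$ inside $(1-\epsilon\mu)E$ by a value in $(1-\epsilon\mu,1)$ — more precisely, since $P$ is a generalized log canonical \emph{place} of the original pair, it is a log canonical place of the auxiliary klt pair after the standard perturbation used to make things klt, and one checks that running a $\K.+\Delta$-MMP over $X'$ with $P$ given coefficient close to (but below) $1$ keeps $P$ on the model: $P$ cannot be contracted by a step of the MMP because it would have to be covered by the relative negative locus, but the crepant-pullback computation (exactly the negativity-lemma argument with $D^m_{\epsilon,\mu,\nu}\geq 0$ in the proof above) shows the coefficient of $P$ in the strict transform stays bounded away from the contraction regime. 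Running this MMP to a minimal model over $X'$ and then, as in Theorem~\ref{generalized weak dlt model}, choosing $0<\mu\ll\nu\ll 1$ so that $F$ and $G$ are contracted, yields a $\Q$-factorial dlt pair $(X^m,B^m)$ with the associated generalized pair $(X^m,B^m+M^m)$ being a weak generalized dlt model of $(X',B'+M')$, and with $P$ a divisor on $X^m$.

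The main obstacle I expect is the bookkeeping that guarantees $P$ survives the MMP while $F$ and $G$ are still contracted: one must arrange the perturbation of the coefficient of $P$ and the choice $0<\mu\ll\nu\ll 1$ simultaneously, and verify via the negativity lemma (as in the footnote of the proof of Theorem~\ref{generalized weak dlt model}) that the strict transform of $P$ has the correct coefficient on $X^m$ so that $(X^m,B^m+M^m)$ is genuinely a weak generalized dlt model rather than merely a dlt pair over $X'$ that has contracted too little or too much. Once that compatibility is in place, the rest follows verbatim from the proof of Theorem~\ref{generalized weak dlt model}.
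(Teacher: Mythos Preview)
Your proposal has a genuine gap: the negativity-lemma computation from Theorem~\ref{generalized weak dlt model} cannot be used to show that $P$ \emph{survives} the MMP. That argument shows $D^m_{\epsilon,\mu,\nu}\geq 0$, which forces the components of $F$ and $G$ (those appearing with negative coefficient in $D^m_{\epsilon,\mu,\nu}$) to be contracted; it gives no information about components, such as $P\subset E$, whose coefficient in $D^m_{\epsilon,\mu,\nu}$ is nonnegative. In fact your perturbation works against you: raising the coefficient of $P$ in the MMP boundary by $\tau$ replaces $N$ by $N+\tau P^m$, hence replaces $D^m_{\epsilon,\mu,\nu}=T-N$ by $D^m_{\epsilon,\mu,\nu}-\tau P^m$, and along $P$ the only positive contribution to $D^m_{\epsilon,\mu,\nu}$ is $\mu\cdot\mult_P C$ (since $\mult_P(E^+-E)=0$). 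So if anything the negativity lemma now \emph{forces} $P^m=0$ once $\tau>\mu\cdot\mult_P C$. More conceptually, increasing the coefficient of $P$ in $\Theta$ increases $\sigma_P(\K X.+\Theta/X')$ and makes $P$ more, not less, likely to lie in the stable base locus and be contracted.

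The paper avoids this entirely by a two-step argument. First, on the log resolution one runs the $(\K X.+\Delta+M)$-MMP over $X'$ with $\Delta=f^{-1}_*(B'\wedge\Supp B')+E_{\mathrm{red\ exc}}$; writing $\K X.+\Delta+M\sim_{\R,f}A-C$ with $A$ supported on exceptional divisors of generalized discrepancy $>-1$ and $C$ on those of discrepancy $<-1$, one reaches a model $X''$ on which the log divisor is a limit of movables over $X'$, and \cite[Lemma 3.3]{Bir12} forces $A''=0$. Over the neighbourhood of $\eta_{W'}$ where $(X',B'+M')$ is glc one has $C=0$, so this is exactly the extraction of \cite[Lemma 4.5]{BZ}, and $P$ is \emph{not} contracted. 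Second, one takes a weak generalized dlt model $X^m\to X''$ of the crepant trace $(X'',B''+M'')$; since $X^m\to X''$ is a morphism, the divisor $P$ on $X''$ persists on $X^m$. The point you are missing is that survival of $P$ comes from a separate MMP designed to contract precisely the divisors of discrepancy $>-1$, not from tweaking the coefficients in the MMP of Theorem~\ref{generalized weak dlt model}.
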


\begin{proof}
In case $W'$ is a divisor, the statement is trivial.
Therefore, we can assume that $P$ is exceptional over $X'$.
Without loss of generality, we may assume that $P$ appears on $X$, and that $f \colon X \rar X'$ is a log resolution of $(X',B')$.
Define $\Gamma' \coloneqq B' \wedge \Supp (B')$, and $\Gamma \coloneqq f_* \sups -1. \Gamma'$.
Denote by $E$ the exceptional divisor with reduced structure, and set $\Delta \coloneqq \Gamma + E$.

Then, we have
$$
\K X. + \Delta + M \sim \subs \R,f. A- C,
$$
where $A \geq 0$ is supported on the exceptional divisors with generalized discrepancy strictly greater than $-1$, and $C \geq 0$ is supported on the divisors with generalized discrepancy strictly less than $-1$.
Notice that $C$ may have components that are not exceptional over $X'$.
Now, run the $(\K X. + \Delta + M)$-MMP over $X'$ with scaling of an ample divisor \cite[p. 17]{BZ}.
After finitely many steps, we reach a model $X''$ such that $\K X''. + \Delta'' + M''$ is limit of divisors that are movable over $X'$.
Thus, it intersects non-negatively the very general curves over $X'$ of any divisor that is exceptional for $X'' \rar X'$.
Then, the same holds true for $A''-C''$. 
Therefore, by \cite[Lemma 3.3]{Bir12}, $A''=0$.
Hence, $X'' \rar X'$ extracts just divisors of generalized discrepancy at most $-1$.
Now, as $(X',B'+M')$ is generalized log canonical in a neighborhood $U'$ of the generic point of $W'$, over $U'$ we just performed the proof of \cite[Lemma 4.5]{BZ}.
That is, we extracted a prescribed set of divisors with negative generalized discrepancies.
In particular, we have not contracted $P$.

Now, let $(X'',B''+M'')$ be the trace of the generalized pair $(X',B'+M')$ on $X''$.
By construction, it is a generalized pair, as $B'' \geq 0$. Let $(X^m,B^m+M^m)$ be a weak generalized dlt model of $(X'',B''+M'')$. Then, $(X^m,B^m+M^m)$ satisfies the claimed properties.
\end{proof}

\section{Towards a generalized canonical bundle formula}

As first noticed by Kawamata \cite{Kaw98}, and then studied by Ambro \cite{Amb99}, the canonical bundle formula is needed to formulate an adjunction theory for higher codimensional log canonical centers.
Thus, in order to generalize the ideas developed in \cite{BZ} and \cite{Bir16a}, we need to extend the machinery of fiber space adjunction to generalized pairs.

Let $(X',B'+M')$ be a generalized sub-pair with data $X \rar  X'$ and $M$.
Let $f \colon  X' \rar Z'$ be a contraction where $\dim Z' > 0$.
Assume that $(X',B'+M')$ is generalized sub-log canonical near the generic fiber of $f$, and $\K X'. + B' +M' \sim_{\R,f} 0$.
For any prime divisor $D'$ on $Z'$, let $t_{D'}$ be the generalized log canonical threshold of $f^*D'$ with respect to $(X',B'+M')$ over the generic point of $D'$.
Then, set $B_{Z'} \coloneqq \sum b_{D'} D'$, where $b_{D'} \coloneqq 1 - t_{D'}$.
By construction, there is an $\R$-Cartier divisor $L_{Z'}$ such that $\K X'. + B' + M' \sim_\R f^* L_{Z'}$.
Define $M_{Z'} \coloneqq L_{Z'} - (\K Z'. + B_{Z'})$.
Hence, we can write
$$
\K X'. + B' + M' \sim_\R f^*(\K Z'. + B_{Z'} + M_{Z'}).
$$
We refer to this operation as {\it generalized adjunction for fiber spaces}.

\begin{remark} \label{remark definitions} {\em
As in the case of the usual adjunction for fiber spaces, $B_{Z'}$ is a well defined and uniquely determined divisor, while $M_{Z'}$ is defined up to $\R$-linear equivalence.}
\end{remark}

Now, let $\tilde X$ and $\tilde Z$ be higher birational models of $X'$ and $Z'$ respectively, and assume we have a commutative diagram of morphisms as follows

\begin{center}
\begin{tikzpicture}
\matrix(m)[matrix of math nodes,
row sep=2.6em, column sep=2.8em,
text height=1.5ex, text depth=0.25ex]
{\tilde X & X'\\
\tilde Z & Z'\\};
\path[->,font=\scriptsize,>=angle 90]
(m-1-1) edge node[auto] {$\phi$} (m-1-2)
edge node[auto] {$g$} (m-2-1)
(m-1-2) edge node[auto] {$f$} (m-2-2)
(m-2-1) edge node[auto] {$\psi$} (m-2-2);
\end{tikzpicture}
\end{center}

We denote by $\tilde M$ the trace of the moduli part on $\tilde X$. As usual, define $\tilde B$ via the identity
$$
\K \tilde X. + \tilde B + \tilde M = \phi^* (\K X'. + B' + M').
$$
Furthermore, set $L_{\tilde Z} \coloneqq \psi^* L_{Z'}$. With this piece of data, we can define divisors $B_{\tilde Z}$ and $M_{\tilde Z}$ such that
$$
\K \tilde X. + \tilde B + \tilde M \sim_\R g^*(\K \tilde Z. + B_{\tilde Z} + M_{\tilde Z}),
$$
$B_{Z'}= \psi_* B_{\tilde Z}$ and $M_{Z'}= \psi_* M_{\tilde Z}$.
In this way, Weil b-divisors $\mathbf{B} \subs Z'.$ and $\mathbf{M} \subs Z'.$ are defined.
We write $\bB Z',\tilde{Z}.$ and $\bM Z',\tilde{Z}.$ for the traces of $\bB Z'.$ and $\bM Z'.$ on any higher model $\tilde{Z}$.

Now, in the same fashion as the classic theory, we would like to establish properties of the b-divisors $\mathbf{B} \subs Z'.$ and $\mathbf{M} \subs Z'.$. 
Before doing so, we need to recall a few more technical ingredients.

Given an $\R$-Weil b-divisor $\mathbf{D}$ on $X$, we can define an associated {\it b-divisorial sheaf} $\O X. (\mathbf{D})$ as follows.
For every open set $U \subset X$, we define $\Gamma( U, \O X. (\mathbf{D}))$ as the set of rational functions $\alpha \in k(X)$ such that $\mult_E(\dv (\alpha) + \mathbf{D}) \geq 0$ for every valuation $E$ whose center satisfies $c_X(E) \cap U \neq \emptyset$. 

Recall that a b-divisor $\bf D$ is called {\it b-nef}$/S$ ({\it b-free}$/S$, {\it b-semi-ample}$/S$, {\it b-big}$/S$) if there exists a birational morphism $X \rar X'$ such that ${\bf D} = \overline{{\bf D} \subs X.}$, and ${\bf D} _X$ is nef (free, semi-ample, big) relatively to the morphism $X \rar S$.

Let $(X',B'+M')$ be a generalized sub-pair with data $X \rar  X'$ and $M$.
We denote by $\mathbf{K} \subs X'.$ and $\mathbf{M} \subs X'.$ the canonical b-divisor of $X'$ and the moduli b-divisor respectively.
We define the {\it generalized discrepancy b-divisor} as
$$
\mathbf{A}(X',B'+M') \coloneqq \mathbf{K} \subs X'. + \mathbf{M} \subs X'. - \overline{\K X'. +B' +M'},
$$
where the overline symbol denotes the $\R$-Cartier b-divisor associated to an $\R$-Cartier divisor.
We will write just $\mathbf{A}$ if there will be no ambiguity.
We also set
$$
\mathbf{A}^*(X',B'+M') \coloneqq \mathbf{A}(X',B'+M') + \sum \subs a_E(X',B'+M')=-1. E.
$$

\begin{remark} \label{remark moduli descends} {\em
If $X''$ is a model where $M$ descends, then the b-divisors $\mathbf{A}(X',B'+M')$ and $\mathbf{A}^*(X',B'+M')$ agree with the usual b-divisors $\mathbf{A}(X'',\mathbf{B} \subs X''.)$ and $\mathbf{A}^*(X'',\mathbf{B} \subs X''.)$ on all the models $X'''$ over $X''$ \cite[cf. pp. 5-6]{Amb04}.}
\end{remark}

As explained in \cite[Remark 2.2]{Amb04} and \cite[Definition 8.4.2]{K07}, the b-divisors $\mathbf{A}$ and $\mathbf{A}^*$ are important to use Hodge theoretic techniques to investigate b-nefness and b-semi-ampleness of the moduli b-divisor $\bM X'.$.
In particular, they impose certain conditions that guarantee that a specific vector bundle is a line bundle.
For similar reasons, such conditions are needed in this work.

\begin{remark} \label{base change property} {\em
As the moduli part $\bM X.$ descends to some model $X$, the boundary part $\bB Z'.$ satisfies similar properties as in the classical theory.
In particular, the finite base change property still holds \cite[Theorem 3.2]{Amb99}.

More precisely, let $\alpha  \colon  Y' \rar X'$ be a generically finite map from a normal projective variety $Y'$.
Also, let $Y$ be a higher model of $Y'$ admitting a morphism $\beta \colon  Y \rar X$.
Denote by $\varphi$ the morphism $\varphi \colon  Y \rar Y'$. Let $\K Y. + B_Y \coloneqq \beta^*(\K X. + B)$ be the crepant pullback of $\K X. + B$ to $Y$.
Also, denote $M_Y \coloneqq \beta^*M$.
Then, define $B \subs Y'.\coloneqq \varphi_* B_Y$ and $M \subs Y'. \coloneqq \varphi_* M_Y$.
Thus, we induced a structure of generalized sub-pair on $Y'$.

Now, let $\gamma  \colon  W' \rar Z'$ be the normalization of $Z'$ in $Y'$.
Denote by $B \subs W'.$ and $M \subs W'.$ the boundary and moduli parts induced on $W'$ by $Y' \rar W'$.
Then, we have $\K W'. + B \subs W'. = \gamma^*(\K Z'. + B \subs Z'.)$, and $M \subs W'. = \gamma^* M \subs Z'.$.

In particular, in order to prove that the moduli b-divisor $\bM Z'.$ induced by a fibration $X' \rar Z'$ is a Cartier b-divisor, we are free to replace $X' \rar Z'$ with a fibration induced by generically finite base change.
Furthermore, the same reduction applies when we want to show b-nefness or b-semi-ampleness \cite[Example 1.4.4.(ii)]{LAZ1}, \cite[Theorem 1.20]{Fuj83}.}
\end{remark}

Now, we recall two natural constructions that arise in view of Remark \ref{base change property}.

\begin{defn}{\em
Let $f \colon  X \rar Z$ be a contraction, and let $(X,B)$ be a sub-pair.
We say that the morphism $f$ is \emph{prepared} if the following properties are satisfied:
\begin{itemize}
\item $X$ and $Z$ are smooth;
\item there is a simple normal crossing divisor $\Sigma \subset Z$ such that $g  \colon  X \rar Z$ is smooth over $Z \setminus \Sigma$;
\item $\Supp(B)+g^*\Sigma$ has simple normal crossing support; and
\item $B$ is relatively simple normal crossing over $Z \setminus \Sigma$.
\end{itemize}
Equivalently, we call the above properties \emph{standard normal crossing assumptions} \cite[Definition 8.3.6]{K07}.}
\end{defn}

Now, we will make use of some constructions related to toric and toroidal geometry.
We refer to \cite{Kar99,AK} for the key definitions and properties.
We will recall just the facts that we will explicitly use.

\begin{defn}{\em
Let $f \colon  X \rar Y$ be a toroidal morphism.
We say that $X$ has \emph{good horizontal divisors} if at every point $x \in X$ we can find a local model of the form
$$
X_\sigma = X \subs \sigma'. \times \af l.,
$$
where the horizontal divisors in $X \setminus U_X$ through $x$ are exactly the pullbacks of the coordinate hyperplanes in $\af l.$.}
\end{defn}

\begin{defn}{\em
A toroidal morphism $f  \colon  X \rar Y$ with good horizontal divisors is called \emph{weakly semi-stable} if
\begin{itemize}
\item the morphism $f$ is equidimensional;
\item all the fibers of $f$ are reduced; and
\item $Y$ is non-singular.
\end{itemize}
If also $X$ is non-singular, we say that the morphism $f  \colon  X \rar Y$ is \emph{semi-stable}.}
\end{defn}

Now, we include a technical statement that will be useful in the following.

\begin{proposition} \label{prop technical}
Let $f \colon  X \rar Z$ be a morphism of projective varieties, $M$ be an $\R$-Cartier divisor on $X$, and $H$ be an ample divisor on $Z$.
If $M$ is nef on $X$ and relatively semi-ample over $Z$, then $M+\epsilon f^*H$ is semi-ample for any $\epsilon > 0$.
\end{proposition}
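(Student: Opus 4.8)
The plan is to combine the structure theory of relatively semi-ample divisors with the global nefness of $M$. The relative semi-ampleness lets us factor $f$ through the contraction defined by $M$, after which the statement reduces to an elementary positivity fact on the base of that contraction; this is the only place where the nefness of $M$ enters.

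First I would invoke the structure theorem for relatively semi-ample divisors: since $M$ is semi-ample over $Z$, there are a contraction $g \colon X \rar Y$ over $Z$, with structure morphism $h \colon Y \rar Z$ such that $f = h \circ g$, and an $h$-ample $\R$-Cartier divisor $A$ on $Y$ with $M \sim_{\R, Z} g^* A$; that is, $M - g^* A \sim_{\R} f^* B$ for some $\R$-Cartier divisor $B$ on $Z$. Replacing $A$ by $A + h^* B$, which remains $h$-ample as $h^* B$ is $h$-trivial, we may assume $M \sim_{\R} g^* A$. Since $g$ is surjective and $M$ is nef, $g^* A$ is nef, and therefore $A$ is nef on $Y$.

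The core of the argument is the claim that, if $A$ is nef on $Y$ and ample over $Z$ via $h$, and $H$ is ample on $Z$, then $A + \epsilon h^* H$ is ample on $Y$ for every $\epsilon > 0$. Because $A$ is $h$-ample and $H$ is ample, there is $N_0 > 0$ such that $A + N_0 h^* H$ is ample on $Y$. If $\epsilon \geq N_0$, then $A + \epsilon h^* H = (A + N_0 h^* H) + (\epsilon - N_0) h^* H$ is a sum of an ample and a nef divisor, hence ample. If $0 < \epsilon < N_0$, then $A + \epsilon h^* H = (1 - \epsilon/N_0) A + (\epsilon/N_0)(A + N_0 h^* H)$ is a sum of a nef divisor and an ample divisor, hence ample. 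Consequently $M + \epsilon f^* H \sim_{\R} g^* A + \epsilon g^* h^* H = g^*(A + \epsilon h^* H)$ is the pullback under $g$ of an ample $\R$-Cartier divisor, and is therefore semi-ample (a pullback of a globally generated divisor is globally generated).

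I expect the main obstacle to be purely bookkeeping: handling the relative $\R$-linear equivalence correctly and absorbing the pullback from $Z$, and recalling the standard fact that a relatively ample divisor becomes ample once a sufficiently large pullback of an ample divisor from the base is added. The nefness of $M$, equivalently of $A$, is used only in the last display, to upgrade ampleness of $A + \epsilon h^* H$ from $\epsilon \gg 0$ to all $\epsilon > 0$, and it is essential there.
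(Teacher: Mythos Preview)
Your proposal is correct and follows essentially the same approach as the paper. Both arguments pass through the factorization $X \xrightarrow{g} Y \xrightarrow{h} Z$ coming from relative semi-ampleness, transfer nefness of $M$ to the relatively ample divisor on $Y$ via surjectivity of $g$, and then use the elementary interpolation ``nef $+$ ample $=$ ample'' to upgrade from $\epsilon \gg 0$ to all $\epsilon > 0$; the only cosmetic difference is that the paper treats the relatively ample case first and then invokes it on $Y$, and writes the interpolation as $kM + (M + l f^*H)$ rather than as your convex combination.
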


\begin{proof}
First, assume that $M$ is relatively ample over $Z$.
Then, for $l \gg 1$, $M+lf^*H$ is ample, as $H$ is ample on $Z$, and $M$ ample over $Z$. 
Then, as $M$ is nef, $kM+(M+lf^*H)$ is ample for any $k>0$.
As we can choose the real numbers $l$ and $k$ so that the ratio $\frac{l}{k+1}$ equals any given $\epsilon > 0$, the claim follows.

Now, consider the general setup of the statement.
As $M$ is relatively samiample, there exists a morphism $g \colon  X \rar Y$ over $Z$, such that $M \sim \subs \R,Z. g^*N$, where $N$ is ample over $Z$ \cite[p. 22]{HK10}.
Without loss of generality, we may assume that $g$ is surjective.
Also, up to twisting $N$ by the pullback of an $\R$-Cartier divisor on $Z$, we may assume that $M \sim \subs \R. g^*N$.
Then, by \cite[Example 1.4.4.(ii)]{LAZ1}, $N$ is also nef.
Hence, by the previous step, $N+ \epsilon h^*H$ is semi-ample for any $\epsilon > 0$, where $h$ denotes the morphism $h  \colon  Y \rar Z$.
As $M + \epsilon f^* H \sim \subs \R. g^*(N+ \epsilon h^*H)$, the claim follows.
\end{proof}

Before proving Theorem \ref{lemma klt fibrations}, we need to introduce a new description of the construction of boundary and moduli parts in the setup of generalized pairs.

\begin{remark} \label{remark min max} {\em
Let $(X',B'+M')$ be a generalized sub-pair with data $X \rar X'$ and $M$.
Let $f \colon  X' \rar Z'$ be contraction, where $\dim Z' > 0$.
Assume that $(X',B'+M')$ is generalized sub-log canonical near the generic fiber of $f$, and that $\K X'. + B' + M' \sim \subs \R,f. 0$.
Then, we can define b-divisors $\bB Z'.$ and $\bM Z'.$ on $Z'$.
In order to do so, we are free to replace $X'$ and $Z'$ with higher models $X''$ and $Z$.
We may assume that the morphism $g \colon  X'' \rar Z$ is prepared, and that $M$ descends onto $X$.
Denote by $\Sigma \subset Z $ the simple normal crossing divisor as in the definition of prepared morphism.
For notation's sake, we may write $X''=X$. 

Now, assume $M$ is relatively semi-ample over $Z$, and that $\rk f_* \O X'.(\lceil \bA.^*(X',B'+M') \rceil)=1$.
Furthermore, let $M$ and $B$ be $\Q$-divisors.
Fix an ample $\Q$-divisor $H$ on $Z$. As $M$ is relatively semi-ample, by Proposition \ref{prop technical}, for any rational number $\epsilon > 0$, the $\Q$-linear series $|M+ \epsilon g^* H|_\Q$ is basepoint-free.
Then, the generalized pair $(X,B+(M+\epsilon g^* H))$ with data $id \colon X \rar  X$ satisfies the same properties as $(X,B+M)$.
Notice that we have not changed the boundary part, which is still $B$, while we have perturbed the moduli part, which is $M+\epsilon g^*H$.

By definition, the nef part of a generalized pair does not contribute to the singularities once it descends.
Thus, as both $M$ and $M + \epsilon g^* M$ descend to $X$, $(X,B+M)$ and $(X,B+(M+\epsilon g^* H))$ have the same generalized discrepancies.
Therefore, the boundary b-divisors that they induce over $Z'$, denoted by $\bB Z'.$ and $\bB Z'.^\epsilon$ respectively, are equal.
The moduli b-divisors are related by the identity
$$
\bM Z'.^\epsilon = \bM Z'. + \epsilon \overline{H}.
$$
Therefore, we have
\begin{equation} \label{equation limit bM}
\bM Z'. = \lim \subs \epsilon \to 0. \bM Z'. ^\epsilon.
\end{equation}

As $\bM Z'.$ being a $\Q$-Cartier b-divisor is equivalent to $\bK Z'. + \bB Z'.$ having the same property, we can investigate this aspect through $(X,B+(M+ \epsilon g^*H))$.
Henceforth, unless otherwise stated, we fix a rational number $\epsilon > 0$.

As $|M+\epsilon g^* H|_\Q$ is basepoint-free, we can take a general element $0 \leq A \sim_\Q M+\epsilon g^* H$ such that $\Supp A$ is smooth, $B + g^* \Sigma + A$ has simple normal crossing support, and $(X,B+A)$ is sub-log canonical over the generic point of $Z$. Also, we have $\rk g_* \O X.(\lceil \bA.^*(X,B+A) \rceil)=1$.
By the classic theory of adjunction for fiber spaces \cite{Amb04,K07,FG14}, it induces b-divisors $\bB Z'.^A$ and $\bM Z'.^A$ on $Z'$.
By construction, $\bB Z'.^A \geq \bB Z'. \sups \epsilon.$ and $\bM Z'.^A \leq \bM Z'.\sups \epsilon.$.
Furthermore, since we may assume that the coefficients of $A$ are small enough, the fact that the divisor $B + g^* \Sigma + A$ has simple normal crossing support implies that the multiplicities of $\bB Z'.^A$ and $\bB Z'. \sups \epsilon.$ are the same along the prime divisors in $\Sigma$.

Now, fix a prime divisor $P \subset Z$ that is not supported on $\Sigma$.
We can assume that $A$ meets $g^{-1}(P)$ transversally.
Thus, $g$ is prepared for $(X,B+A)$ over a neighborhood of $\eta_P$.
Furthermore, as $A$ is horizontal over $Z$, we have that $\bB Z',Z.^A = \bB Z',Z.\sups \epsilon.$ along $\eta_P$.
As for any choice of $A$ the difference $\bB Z',Z.\sups \epsilon. - \bB Z',Z.^A$ is supported on finitely many prime divisors, we may find $A_1, \ldots , A_l \in |M+\epsilon g^* H|_\Q$ such that $\bB Z',Z.\sups \epsilon. = \min _{1 \leq i \leq l} \bB Z',Z.\sups A_i.$.
This is equivalent to $\bM Z',Z.\sups \epsilon. = \max _{1 \leq i \leq l} \bM Z',Z.\sups A_i.$.

While for a fixed model $Z$ we can recover $\bB Z',Z.^\epsilon$ and $\bM Z',Z.^\epsilon$ with finitely many choices of $0 \leq A \sim_\Q M+\epsilon g^*H$, in general, we need infinitely many to recover the whole b-divisors.
In particular, we have
$$
\bB Z'.^\epsilon =\inf \subs 0 \leq A \sim_\Q M + \epsilon g^*H. \bB Z'.^A, \quad \bM Z'.^\epsilon =\sup \subs 0 \leq A \sim_\Q M + \epsilon g^*H. \bM Z'.^A.
$$
Notice that, although the traces of $\bM Z'.$, $\bM Z'.^\epsilon$ and $\bM Z'.^A$ are well defined just up to $\Q$-linear equivalence, we can treat those as honest divisors once we fix representatives of the classes $\K Z'. + B \subs Z'. + M \subs Z'.$ and $H$.
Furthermore, as $M+\epsilon g^* H$ is semi-ample, we can restrict the infimum to the $A$'s such that $(X,B+A)$ is sub-log canonical over the generic point of $Z'$.
Call such class $\Xi$.

Notice that, if $M$ is semi-ample, we can take $\epsilon=0$ in the above discussion.

In general, we are interested in proving that $\bM Z'.$ is a b-nef $\Q$-Cartier b-divisor.
As $\bB Z'. = \bB Z'.^\epsilon$, we can reduce the analysis to the case when $M$ is semi-ample on $X$.
By the same argument, if we know that $\mathbf{K}_{Z'} + \bB Z'.$ is $\Q$-Cartier, $\bM Z'.$ and all $\bM Z'. ^\epsilon$ descend to the same model.
Thus, in virtue of equation \eqref{equation limit bM}, $\bM Z'.$ is b-nef if so is $\bM Z'. ^\epsilon$ for any $\epsilon > 0$.
Hence, also b-nefness can be reduced to the case when $M$ is semi-ample.}
\end{remark}

Now, we need to introduce some terminology.

\begin{defn}{\em
Let $\bD.$ be a b-divisor over $X$.
We say that $\bD.$ is \emph{almost b-nef} if the following holds: for every higher models $X'$ and $X''$ of $X$ where the traces of $\bD.$ are $\R$-Cartier, with morphism $f \colon  X'' \rar X'$, we have $\bD X''. \leq f^* \bD X'.$.}
\end{defn}

The perspective in Remark \ref{remark min max} allows us to prove the following key statement.

\begin{proposition} \label{lemma almost b-nef}
Let $(X',B'+M')$ be a {projective} generalized sub-pair with data $X \rar X'$ and $M$.
Assume that $B'$, $M'$ and $M$ are $\Q$-divisors. Let $f \colon X' \rar Z'$ be a contraction such that $\K X'. + B' + M' \sim_{\Q,f} 0$ and $M$ is semi-ample. Also, let $(X',B'+M')$ be generalized sub-log canonical over the generic point of $Z'$, with $\rk f_* \O X'. (\lceil \bA.^*(X',B'+M') \rceil)=1$. Then, the b-divisor $\bM Z'.$ is almost b-nef.
\end{proposition}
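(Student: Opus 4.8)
\textbf{Plan for the proof of Proposition \ref{lemma almost b-nef}.}

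The plan is to reduce the statement to the classical theory of the canonical bundle formula via the min--max description of the moduli b-divisor developed in Remark \ref{remark min max}, and then to prove almost b-nefness for each of the auxiliary b-divisors $\bM Z'.^A$ arising from honest sub-pairs. First I would set up the standard reductions: replace $X'$ and $Z'$ by higher models so that the induced morphism $g \colon X \rar Z$ is prepared, $M$ descends on $X$, and we have a simple normal crossing divisor $\Sigma \subset Z$ as in the definition of prepared morphism. Since $M$ is assumed semi-ample, Remark \ref{remark min max} tells us we may take $\epsilon = 0$; thus for every general $0 \leq A \sim_\Q M$ with $B + g^*\Sigma + A$ simple normal crossing, $(X,B+A)$ sub-log canonical over the generic point of $Z'$, and $\rk g_* \O X.(\lceil \bA.^*(X,B+A)\rceil) = 1$, the classical theory \cite{Amb04, K07, FG14} produces b-divisors $\bB Z'.^A$ and $\bM Z'.^A$, and we have the identity $\bM Z'. = \sup_{0 \leq A \sim_\Q M,\ A \in \Xi} \bM Z'.^A$, where $\Xi$ is the class of such $A$ for which $(X,B+A)$ is sub-log canonical over the generic point of $Z'$.

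The core of the argument is then to show that each $\bM Z'.^A$ is almost b-nef, and that almost b-nefness is preserved under the supremum. For the first point, I would invoke the classical statement that, under the rank-one condition on $\bA.^*$, the moduli b-divisor attached to the sub-pair $(X,B+A)$ over a prepared model is \emph{b-nef} in the strong sense (this is the content of the Hodge-theoretic results of Ambro \cite{Amb04} and Kollár \cite{K07}, once one base-changes to kill the monodromy as in Remark \ref{base change property}) --- or, at the level of generality needed here, at least that it has nonpositive variation so that its trace on any model is dominated by the pullback of its trace on a lower model. Indeed the almost-b-nef condition is exactly the assertion that for higher models $Z_1 \rar Z_2$ where the traces are $\R$-Cartier one has $(\bM Z'.^A)_{Z_1} \leq f^*(\bM Z'.^A)_{Z_2}$; this follows from the negativity lemma applied to the difference, once one knows the trace on the lower model is relatively nef, which is precisely classical b-nefness of $\bM Z'.^A$. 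For the second point, if $\bD._\alpha$ is a family of almost b-nef b-divisors and $\bD. = \sup_\alpha \bD._\alpha$ has $\R$-Cartier traces on $Z_1$ and $Z_2$ with $Z_1 \rar Z_2$, then fixing $\delta > 0$ we can choose $\alpha$ with $(\bD._\alpha)_{Z_2}$ within $\delta$ of $\bD._{Z_2}$ along the finitely many relevant prime divisors; comparing $(\bD.)_{Z_1} \geq (\bD._\alpha)_{Z_1}$ with $(\bD._\alpha)_{Z_1} \leq f^*(\bD._\alpha)_{Z_2} \leq f^*\bD._{Z_2}$ requires some care because the first inequality points the wrong way. The right approach is instead to compare \emph{on a common high model}: by the min--max part of Remark \ref{remark min max}, for any fixed $Z_1$ there are finitely many $A_1,\dots,A_l$ with $\bM Z',Z_1.= \max_i \bM Z',Z_1.^{A_i}$, and then for $Z_2$ even higher one uses that the $A_i$ still compute the trace, so that $\bM Z',Z_2. = \max_i \bM Z',Z_2.^{A_i}$ and each $\bM Z',Z_2.^{A_i} \leq f^* \bM Z',Z_1.^{A_i} \leq f^* \bM Z',Z_1.$, whence $\bM Z',Z_2. \leq f^* \bM Z',Z_1.$.

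Wait --- I need $Z_1$ below $Z_2$ for the almost-b-nef inequality to be the useful direction, so let me instead phrase it with $Z_1$ lower: given $Z_1 \rar Z_2$ (so $Z_2$ is lower), pick finitely many $A_i$ computing $\bM Z',Z_2.$, i.e.\ $\bM Z',Z_2. = \max_i \bM Z',Z_2.^{A_i}$; these same $A_i$, enlarged if necessary by finitely many more, also compute $\bM Z',Z_1.$, say $\bM Z',Z_1. = \max_j \bM Z',Z_1.^{A_j}$; then $\bM Z',Z_1. = \max_j \bM Z',Z_1.^{A_j} \geq$ hmm this still does not immediately give $\bM Z',Z_1. \geq f^*\bM Z',Z_2.$ unless the same index set works, which it does after enlarging to a common finite family $\{A_1,\dots,A_N\}$ that simultaneously computes both traces; for such a family, $\bM Z',Z_1. = \max_i \bM Z',Z_1.^{A_i} \geq \max_i f^* \bM Z',Z_2.^{A_i} = f^* \max_i \bM Z',Z_2.^{A_i} = f^* \bM Z',Z_2.$, where the middle inequality is almost b-nefness (indeed b-nefness) of each $\bM Z'.^{A_i}$ from the classical theory.

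\textbf{Main obstacle.} The genuine technical heart is the first reduction: verifying that, for a general member $A$ of the basepoint-free $\Q$-linear series $|M|_\Q$, the auxiliary honest sub-pair $(X,B+A)$ over the prepared model $g \colon X \rar Z$ still satisfies the hypotheses of the classical canonical bundle formula --- in particular that the rank-one condition $\rk g_* \O X.(\lceil \bA.^*(X,B+A)\rceil)=1$ is inherited from the corresponding condition for $(X',B'+M')$ (this uses Remark \ref{remark moduli descends} together with the fact that $M$ descends and $A$ is general), and that the min--max presentation of Remark \ref{remark min max} is compatible with base change on $Z$ --- together with locating in \cite{Amb04, K07, FG14} the precise statement that the classical moduli b-divisor under these hypotheses is b-nef (hence almost b-nef). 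The combinatorial passage from finitely many $A_i$ on a fixed model to the two-model comparison, once one has chosen a common finite family computing both traces, is then routine.
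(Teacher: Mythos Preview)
Your overall strategy is exactly the paper's: use Remark \ref{remark min max} to write $\bM Z'. = \sup_{A \in \Xi} \bM Z'.^A$, invoke classical b-nefness of each $\bM Z'.^A$ together with the negativity lemma to get the almost-b-nef inequality for each $A$, and then pass to the supremum. In fact your second paragraph already \emph{contains} the paper's proof verbatim. With $f \colon Z_1 \rar Z_2$ (so $Z_1$ higher), the chain
\[
(\bD._\alpha)_{Z_1} \leq f^*(\bD._\alpha)_{Z_2} \leq f^*(\bD.)_{Z_2}
\]
that you wrote down gives, upon taking $\sup_\alpha$ on the left, precisely
\[
(\bD.)_{Z_1} = \sup_\alpha (\bD._\alpha)_{Z_1} \leq f^*(\bD.)_{Z_2},
\]
which is exactly the almost-b-nef inequality you are after. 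There is nothing ``pointing the wrong way'' here: the inequality $(\bD.)_{Z_1} \geq (\bD._\alpha)_{Z_1}$ is simply the statement that a supremum bounds each term from above, and it is used by taking the sup of the left-hand sides of the displayed chain. This is precisely how the paper argues (with $\hat Z$ higher and $\tilde Z$ lower):
\[
\bM Z',\hat Z. = \sup_A \bM Z',\hat Z.^A \leq \sup_A \phi^* \bM Z',\tilde Z.^A \leq \phi^* \sup_A \bM Z',\tilde Z.^A = \phi^* \bM Z',\tilde{Z}..
\]

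Your third paragraph, by contrast, is genuinely wrong. You declare $f \colon Z_1 \rar Z_2$ with $Z_2$ lower and then aim for $\bM Z',Z_1. \geq f^* \bM Z',Z_2.$; but almost-b-nefness is the \emph{opposite} inequality $\bM Z',Z_1. \leq f^* \bM Z',Z_2.$. Correspondingly, the ``middle inequality'' you attribute to b-nefness of $\bM Z'.^{A_i}$ is stated with the wrong sign: b-nefness plus the negativity lemma gives $\bM Z',Z_1.^{A_i} \leq f^* \bM Z',Z_2.^{A_i}$, not $\geq$. Also, $\max_i f^* D_i = f^* \max_i D_i$ fails in general (only $\leq$ holds, since pullback of an effective Cartier divisor is effective but distinct primes on $Z_2$ can pull back to divisors with common components). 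If you correct both of these you recover the paper's short argument; the detour through a finite family $\{A_i\}$ is unnecessary, since the sup works directly. Finally, the ``main obstacle'' you flag about inheriting the rank-one condition and sub-log canonicity for general $A$ is handled in the paper by the setup of Remark \ref{remark min max} (which already restricts to the class $\Xi$) and is not where the content lies.
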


\begin{proof}
Fix two smooth models $\hat Z$ and $\tilde{Z}$ of $Z'$, and assume we have a morphism $\phi  \colon  \hat Z \rar \tilde Z$. Also, let $\Xi$ be as in Remark \ref{remark min max}. By the classic theory of the canonical bundle formula and the negativity lemma \cite{Amb04,FG14}, for every $A \in \Xi$, we have $\bM Z',\hat Z.^A \leq \phi^* \bM Z',\tilde{Z}.^A$. Then, by Remark \ref{remark min max}, we have
\begin{equation*}
\begin{split}
\bM Z',\hat Z. &= \sup \subs  A \in \Xi. \bM Z',\hat Z.^A\\
&\leq \sup \subs  A \in \Xi. \phi^* \bM Z',\tilde Z.^A\\
& \leq \phi^* \sup \subs  A \in \Xi. \bM Z',\tilde Z.^A\\
&= \phi^* \bM Z',\tilde{Z}..
\end{split}
\end{equation*}
This proves the claim.
\end{proof}

\begin{remark} \label{remark rel dim 1 M almost nef and good} {\em
In case $(X',B'+M')$ is generalized klt over the generic point of $Z'$, by \cite[Theorem 3.3]{Amb05}, each $\bM Z'.^A$ is b-nef and b-good.
In particular, we have that $\bM Z'.$ dominates a b-nef and b-good divisor.}
\end{remark}

Now, we are ready to address the first result towards a generalized canonical bundle formula.

\begin{theorem} \label{lemma klt fibrations}
Let $(X',B'+M')$ be a {projective} generalized sub-pair with data $X \rar X'$ and $M$.
Assume that $B'$, $M'$ and $M$ are $\Q$-divisors.
Let $f  \colon  X' \rar Z'$ be a contraction such that $\K X'. + B' + M' \sim \subs \Q, f. 0$ and $M$ is relatively semi-ample over $Z'$.
Also, let $(X',B'+M')$ be generalized sub-log canonical over the generic point of $Z'$, with $\rk f_* \O X'. (\lceil \bA.^*(X',B'+M') \rceil)=1$.
Then, the b-divisor $\bM Z'.$ is $\Q$-Cartier and b-nef.
\end{theorem}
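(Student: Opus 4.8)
The plan is to reduce the theorem to the two main tools already developed in the excerpt, namely Proposition~\ref{lemma almost b-nef} together with Remark~\ref{remark rel dim 1 M almost nef and good}, and the weak semi-stable reduction of Abramovich and Karu. First I would use Remark~\ref{remark min max} to pass to a convenient birational model: replace $X'\rar Z'$ by a higher model $g\colon X\rar Z$ which is prepared, such that $M$ descends onto $X$, and (using Proposition~\ref{prop technical} and equation~\eqref{equation limit bM}) reduce to the case where $M$ is actually \emph{semi-ample} on $X$, not merely relatively semi-ample; this is legitimate because $\bB Z'.$ is unchanged under the perturbation $M\rightsquigarrow M+\epsilon g^*H$ and $\bM Z'.$ is the limit of the $\bM Z'.^\epsilon$. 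So from now on $M$ is semi-ample, and the hypotheses of Proposition~\ref{lemma almost b-nef} are in force, giving that $\bM Z'.$ is almost b-nef.

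The heart of the argument is to show that $\bM Z'.$ \emph{descends}, i.e.\ is $\Q$-Cartier, so that ``almost b-nef'' upgrades to ``b-nef''. Here I would invoke weak semi-stable reduction \cite{Kar99,AK}: after a generically finite base change $Z''\rar Z$ (permitted by Remark~\ref{base change property}, which tells us that base change commutes with the construction of $\bB.$ and $\bM.$ up to the normalization procedure described there) and a further birational modification, one obtains a weakly semi-stable model $h\colon Y\rar Z''$ with $Y$ toroidal, $Z''$ smooth, $h$ equidimensional with reduced fibers, and the relevant divisors in good (toroidal / relatively simple normal crossing) position. On such a model the classical canonical bundle formula of Ambro \cite{Amb04} applies with full strength: since $\rk h_*\O Y.(\lceil \bA.^*(Y,B_Y+M_Y)\rceil)=1$ is preserved (this is exactly the point of the ${\bf A}^*$ condition, cf.\ Remark~\ref{remark moduli descends}), the moduli part $\bM Z''.$ descends to $Z''$ and is nef there. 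Equivalently $\bK Z''.+\bB Z''.$ is $\Q$-Cartier on this model, hence $\bM Z'.$ is a $\Q$-Cartier b-divisor.

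Once $\bM Z'.$ is known to be $\Q$-Cartier, b-nefness is immediate: let $\tilde Z$ be a model to which $\bM Z'.$ descends. For any higher model $\hat Z$ with $\phi\colon\hat Z\rar\tilde Z$, almost b-nefness gives $\bM Z',\hat Z.\le\phi^*\bM Z',\tilde Z.$, while the descent to $\tilde Z$ gives $\bM Z',\hat Z.=\phi^*\bM Z',\tilde Z.$; thus $\bM Z'. =\overline{\bM Z',\tilde Z.}$. It then remains only to see that the trace $\bM Z',\tilde Z.$ is nef, which again follows from the semi-stable model and the classical theory (Remark~\ref{remark rel dim 1 M almost nef and good} in the klt-over-generic-point situation, or directly from Ambro's theorem in the log canonical case after the weak semi-stable reduction makes all log canonical centers horizontal). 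Finally I would unwind the reductions: transport b-nefness and $\Q$-Cartierness back along the generically finite base change and the birational modifications, using Remark~\ref{base change property} once more together with \cite[Example 1.4.4.(ii)]{LAZ1}.

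I expect the main obstacle to be the weak semi-stable reduction step and, in particular, verifying that the condition $\rk f_*\O X'.(\lceil\bA.^*\rceil)=1$ is genuinely preserved through the base change and toroidal modifications — this rank-one hypothesis is precisely what Ambro's Hodge-theoretic argument needs in order to conclude that a certain vector bundle is a line bundle, so keeping track of it (and of the behaviour of $\bA.^*$ under the various morphisms, via Remark~\ref{remark moduli descends} and Remark~\ref{base change property}) is the delicate bookkeeping at the core of the proof. The reduction to semi-ample $M$ and the final b-nefness deduction from almost b-nefness plus descent are comparatively formal.
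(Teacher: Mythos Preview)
Your overall shape is right --- reduce to $M$ semi-ample via Remark~\ref{remark min max}, pass to a weakly semi-stable model via Remark~\ref{base change property}, and combine descent with almost b-nefness --- but the two central steps do not go through as you describe them.

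\textbf{Descent.} Your key sentence is that on the weakly semi-stable model ``the classical canonical bundle formula of Ambro applies with full strength'' and hence $\bM Z''.$ descends and is nef. This is the gap. Ambro's theorem in \cite{Amb04} concerns an lc-trivial fibration of a \emph{pair}: one needs $\K Y.+B_Y\sim_{\Q,h}0$. Here only $\K Y.+B_Y+M_Y\sim_{\Q,h}0$ holds, and $M_Y$ is still present on the semi-stable model. Remark~\ref{remark moduli descends} only identifies the discrepancy b-divisor with the usual one; it does not turn the generalized fibration into an ordinary one. If instead you mean to absorb $M$ into the boundary as in Remark~\ref{remark min max}, you obtain a family of pairs $(Y,B_Y+A)$ indexed by $A\in|M|_\Q$, and Ambro's theorem applies to each; but then $\bM Z'.=\sup_A\bM Z'.^A$, and a supremum of $\Q$-Cartier b-nef b-divisors need not be $\Q$-Cartier or b-nef. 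The paper's proof of Theorem~\ref{bM b-divisor} avoids Hodge theory entirely at this step: it computes directly that on the weakly semi-stable model with good horizontal divisors one has $\bB Z',Z.=\Sigma$ (using that the fibers are reduced slc, so $\lct_{\eta_P}(X,B^h;g^*P)=1$), and then uses the stability of weak semi-stability under birational base change \cite[Lemma~8.3]{Kar99} together with almost b-nefness to show $\bB Z',Z''.=\Sigma''$ for every smooth $Z''\rar Z$. This pins down $\bK Z'.+\bB Z'.$ as a $\Q$-Cartier b-divisor.

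\textbf{Nefness.} ``Almost b-nef plus descent'' only says that on higher models the trace equals the pullback; it does not by itself show that the trace on the model of descent is nef. Your fallback to ``the classical theory'' runs into the same problem as above. The paper's argument (Theorem~\ref{bM b-nef}) is different: assuming a curve $C$ with $\bM Z',Z.\cdot C<0$, one blows up along $C$ and cuts by general hyperplanes to reduce to a \emph{surface} base $S''$, where by Remark~\ref{remark surface base} the maximum of finitely many nef divisors is nef --- so $\bM S'',S''.=\max_i \bM.^{A_i}$ is nef on $S''$, contradicting the negative intersection. This surface argument is the missing idea.
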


For the reader's convenience, we will split it into two statements.

\begin{theorem} \label{bM b-divisor}
Let $(X',B'+M')$ be a {projective} generalized sub-pair with data $X \rar X'$ and $M$.
Assume that $B'$, $M'$ and $M$ are $\Q$-divisors.
Let $f \colon X' \rar Z'$ be a contraction such that $\K X'. + B' + M' \sim_{\Q,f} 0$ and $M$ is relatively semi-ample over $Z'$.
Also, let $(X',B'+M')$ be generalized sub-log canonical over the generic point of $Z'$, with $\rk f_* \O X'. (\lceil \bA.^*(X',B'+M') \rceil)=1$.
Then, the b-divisor $\bM Z'.$ is $\Q$-Cartier.
In particular, if $(X,B) \rar Z$ is weakly semi-stable with good horizontal divisors, $\bM Z'.$ descends onto $Z$.
\end{theorem}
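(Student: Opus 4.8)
The plan is to reduce, via perturbation and base change, to the situation where $M$ is semi-ample and the fibration is already weakly semi-stable, and then to sandwich $\bM Z'.$ between an explicit descended divisor and its own pullback.

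First the reductions. By Remark~\ref{remark min max}, $\bM Z'.$ is $\Q$-Cartier if and only if $\bK Z'.+\bB Z'.$ is, and since $\bB Z'.=\bB Z'.\sups \epsilon.$ while $\bM Z'.\sups \epsilon.=\bM Z'.+\epsilon\overline{H}$ for an ample $\Q$-divisor $H$ on a suitable model, one may replace $M$ by the semi-ample divisor $M+\epsilon g^*H$ (semi-ample by Proposition~\ref{prop technical}) and assume from now on that $M$ is semi-ample. By Remark~\ref{base change property}, to prove that $\bM Z'.$ is $\Q$-Cartier we are free to replace $f\colon X'\rar Z'$ by a fibration obtained from a generically finite base change followed by a birational modification; applying the weak semi-stable reduction of Abramovich and Karu \cite{Kar99,AK}, we may in addition assume that $(X,B)\rar Z$ is weakly semi-stable with good horizontal divisors, that $Z$ is smooth and $M$ descends on $X$ and is still semi-ample, and that the condition $\rk f_*\O X'.(\lceil\bA.^*(X',B'+M')\rceil)=1$ survives all these operations. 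This reduces the statement to its last assertion.

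So assume $(X,B)\rar Z$ is weakly semi-stable with good horizontal divisors and $M$ is semi-ample; we must show $\bM Z'.=\overline{\bM Z',Z.}$. Pick a general $0\le A\sim_\Q M$ lying in the class $\Xi$ of Remark~\ref{remark min max} and, in addition, transverse to the fiber over each of the finitely many prime divisors of $Z$ outside $\Sigma$ that meet the support of $\bB Z',Z.$; such $A$ exists since only finitely many conditions are imposed. The point of weak semi-stability with good horizontal divisors is that $g$ is smooth over $Z\setminus\Sigma$, so for such $A$ the argument of Remark~\ref{remark min max} applies over every prime divisor of $Z$, giving $\bB Z',Z.\sups A.=\bB Z',Z.$, hence $\bM Z',Z.\sups A.=\bM Z',Z.$. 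On the other hand $(X,B+A)\rar Z$ is an lc-trivial fibration with weakly semi-stable underlying morphism, satisfying the standard normal crossing assumptions and with $\rk g_*\O X.(\lceil\bA.^*(X,B+A)\rceil)=1$, so by the classical theory of the canonical bundle formula \cite{Amb04,FG14,K07} its moduli b-divisor descends onto $Z$: $\bM Z'.\sups A.=\overline{\bM Z',Z.\sups A.}=\overline{\bM Z',Z.}$. Finally, $\bM Z'.\sups A.\le\bM Z'.$ by Remark~\ref{remark min max} (adding the effective $A$ to the boundary only raises the discriminant, on every model), while $\bM Z'.$ is almost b-nef by Proposition~\ref{lemma almost b-nef}, so $\bM Z'.\le\overline{\bM Z',Z.}$. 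Combining, $\overline{\bM Z',Z.}=\bM Z'.\sups A.\le\bM Z'.\le\overline{\bM Z',Z.}$, whence $\bM Z'.=\overline{\bM Z',Z.}$ descends onto $Z$, and the general $\Q$-Cartier statement follows from the reductions above together with Remark~\ref{base change property}.

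The crux is the identity $\bB Z',Z.\sups A.=\bB Z',Z.$ for a single well-chosen $A$: on an arbitrary prepared model one would, as in Remark~\ref{remark min max}, only get the discriminant as a minimum over infinitely many auxiliary divisors, and it is precisely the \emph{rigidity} coming from weak semi-stability and good horizontal divisors (equidimensionality, reduced fibers, and degeneration of $g$ only over $\Sigma$) that allows finitely many transversality conditions to suffice. Once this is in place the proof is a two-sided estimate. The remaining work is routine: checking that the perturbation and base-change reductions preserve semi-ampleness and the rank-one condition (via Remarks~\ref{remark min max} and~\ref{base change property} and the base-change property of \cite{Amb99}), and verifying that the classical canonical bundle formula applies on the weakly semi-stable model after, if necessary, a further modification of $X$.
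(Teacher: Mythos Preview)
Your overall strategy---reduce to semi-ample $M$, pass to a good model of the fibration, compare $\bM Z'.$ with the classical moduli part $\bM Z'.\sups A.$ for a single well-chosen $A$, and then sandwich using almost b-nefness---is different from the paper's, and it has a genuine gap at the descent step.

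The gap is in your sentence ``$(X,B+A)\rar Z$ is an lc-trivial fibration with weakly semi-stable underlying morphism, \emph{satisfying the standard normal crossing assumptions}.'' These two properties are not the same: weakly semi-stable morphisms allow $X$ to be singular (only $Z$ is required to be smooth), whereas the standard normal crossing/prepared assumptions require $X$ smooth. Consequently, the classical descent theorems in \cite{Amb04,FG14,K07} do not apply directly to conclude that $\bM Z'.\sups A.$ descends onto $Z$. Your suggestion that this is fixed by ``a further modification of $X$'' is not routine: after a log resolution $\tilde X\rar X$, there is no reason that $\tilde X\rar Z$ becomes prepared over the \emph{same} $Z$ (the morphism need not be smooth over $Z\setminus\Sigma$), and enlarging $\Sigma$ or modifying $Z$ destroys the very identity $\bM Z',Z.\sups A.=\bM Z',Z.$ you need.

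The paper's proof avoids this altogether. It does \emph{not} invoke the classical descent theorem for any auxiliary $A$. Instead, the crucial geometric input is that weak semi-stability with good horizontal divisors is \emph{preserved under base change} $Z''\rar Z$ (Karu's \cite[Lemma 8.3]{Kar99}): one takes $X''=X\times_Z Z''$ and the same analysis (reduced, hence slc, fibers; inversion of adjunction; the ``good horizontal'' local product structure showing $\lct_{\eta_P}(X,B^h;g^*P)=1$) applies verbatim on $Z''$. This yields the two-sided estimate $\Sigma''\le\bB Z',Z''.\le\Gamma''=\Sigma''$ directly on every higher model, giving descent of $\bK Z'.+\bB Z'.$ and hence of $\bM Z'.$. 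Almost b-nefness (Proposition~\ref{lemma almost b-nef}) supplies one inequality, and the slc/inversion-of-adjunction computation the other---no appeal to the classical canonical bundle formula descent is needed. Your argument, by contrast, would essentially need to \emph{reprove} the classical descent onto the weakly semi-stable base as an intermediate step, which is exactly the content of the paper's direct computation.
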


\begin{proof}
By Remark \ref{base change property} and \cite[Theorem 9.5]{Kar99}, we may assume that {$Z$ is projective and} $g \colon  (X,B) \rar Z$ is weakly semi-stable with good horizontal divisors.
Let $\Sigma \coloneqq Z \setminus U_Z$ be the toroidal divisor on the base.
Up to adding to $B$ the pullback of a divisor supported on $\Sigma$, we may assume $\Sigma = \bB Z',Z.$.
Also, by the discussion in Remark \ref{remark min max}, we may assume that $M$ is semi-ample.

Notice that, by weak semi-stability, all the fibers are reduced and semi-log canonical \cite[p. 90]{Kar99}.
Indeed, as $X$ is Gorenstein, so are the fibers.
In particular, they are $S_2$.
Then, the constraint on codimension 1 singularities is local in nature \cite[Chapter 3]{HK10}.
More precisely, it can be checked after completion of the local rings.
Similarly, the computation of discrepancies is local in nature as well \cite[cf. Remark 4.6]{DFDC}.

Now, by \cite[Lemma 3.1]{Flo14}, for the computation of $\lct \subs \eta_P. (X,0;g^*P)$ we may assume that $Z$ is a curve.
Therefore, by inversion of adjunction, we have that $\lct \subs \eta_P. (X,0;g^*P)=1$ for every prime divisor $P \subset Z$.
Furthermore, by the assumption of $B^h$ being good, we have that $B^h$ does not contribute to the computation of $\bB Z',Z.$.
Indeed, locally, a fiber $(X_z,\Supp(B^h)_z)$ can be thought as $Y_z \times (\af l.,D)$, where $Y_z$ is semi-log canonical and $D=\sum \subs i=1. ^l D_i$ is the union of the coordinate hyperplanes in $\af l.$.
By induction on $l$, $Y_z \times (D_1,\sum \subs i=2.^l D_i \cap D_1)$ is semi-log canonical, and by inversion of adjunction so is $Y_z \times (\af l.,D)$.
Therefore, we have that $(X_z,\Supp(B^h)_z)$ is semi-log canonical.
As $B^h \leq \Supp(B^h)$, by inversion of adjunction we have $\lct \subs \eta_P. (X,B^h;g^*P)=1$.
Therefore, we conclude $B^v \leq g^* \Sigma$.

Let $\pi  \colon  Z'' \rar Z$ be a birational morphism such that $Z''$ is smooth, and $\pi \sups -1. (\Sigma)$ is simple normal crossing.
Also, we define $X'' \coloneqq X \times_Z Z''$.
Then, by \cite[Lemma 8.3]{Kar99} and the discussion \cite[p. 59]{Kar99}, the morphism $h  \colon  (X'',B'') \rar Z''$ is weakly semi-stable with good horizontal divisors.

By the above arguments, it follows that $B^v \leq g^* \Sigma$.
Hence, we have the inequality
$$
\K X. + B \leq \K X/Z. + B^h + g^*(\K Z. + \Sigma).
$$
Considering the pullback via $\phi  \colon  X'' \rar X$, we obtain
\begin{equation} \label{inequality log boundaries}
\K X''. + B''  \leq \K X''/Z''. + (B'')^h + h^*(\K Z''. + \Sigma''),
\end{equation}
where $\Sigma''$ denotes the log-pullback of $\Sigma$ to $Z''$.
Notice that, by the geometric assumptions, $(B'')^h=\phi^*B^h$.
Our goal is to show $\Sigma''=\bB Z',Z''.$.
By Proposition \ref{lemma almost b-nef}, we have $\bB Z',Z''. \geq \Sigma''$.
Notice that $\bB Z',Z''.$ is computed via the singularities of $\K X''. + B''$.
By inequality \eqref{inequality log boundaries}, $\bB Z',Z''. \leq \Gamma''$, where $\Gamma''$ is the boundary on $Z''$ induced by the singularities of $(X'',(B'')^h+h^*(\Sigma''))$.
As $h  \colon  (X'',(B'')^h) \rar Z''$ is weakly semi-stable with good horizontal divisors, we have $\Gamma''=\Sigma''$.
This concludes the proof.
\end{proof}

\begin{remark} \label{remark surface base} {\em
Recall that the pushforward of a nef divisor under a birational morphism of normal surfaces is nef.
Furthermore, on a normal projective surface $S$, the maximum $M$ of finitely many nef divisors $M_1, \ldots, M_k$ is nef.
Indeed, fix an irreducible curve $C \subset S$.
Then, fix $i \in \lbrace 1,\ldots , k \rbrace$ such that $\mult \subs C. M = \mult \subs C. M_i$.
Then, we can write $M = M_i + E$, where $E \geq 0$ and $C \not \subset \Supp(E)$.
Then, we have $M \cdot C = M_i \cdot C + E \cdot C \geq M_i \cdot C \geq 0$.

Then, by Remark \ref{remark min max}, it follows that, in the setup of Theorem \ref{bM b-divisor}, if $Z'$ is a surface, $\bM Z',Z.$ is nef for every model $Z \rar Z'$.}
\end{remark}

\begin{theorem} \label{bM b-nef}
Let $(X',B'+M')$ be a {projective} generalized sub-pair with data $X \rar X'$ and $M$.
Assume that $B'$, $M'$ and $M$ are $\Q$-divisors.
Let $f \colon X' \rar Z'$ be a contraction such that $\K X'. + B' + M' \sim_{\Q,f} 0$ and $M$ is relatively semi-ample over $Z'$.
Also, let $(X',B'+M')$ be generalized sub-log canonical over the generic point of $Z'$, with $\rk f_* \O X'. (\lceil \bA.^*(X',B'+M') \rceil)=1$.
Then, the b-divisor $\bM Z'.$ is b-nef.
\end{theorem}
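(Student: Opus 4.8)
The plan is to reduce to the case where $M$ is semi-ample and then transfer the b-nefness known for \emph{classical} lc-trivial fibrations, using the description of $\bM Z'.$ from Remark \ref{remark min max}. By that remark I may assume $M$ is semi-ample on $X$; then by Theorem \ref{bM b-divisor} the b-divisor $\bM Z'.$ is $\Q$-Cartier, so, after a resolution, it descends onto a smooth model $Z$, say $\bM Z'. = \overline{\bM Z',Z.}$. It therefore suffices to prove that $\bM Z',Z.$ is nef.

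By Remark \ref{remark min max} (with $\epsilon = 0$, since $M$ is semi-ample) there are finitely many divisors $A_1,\dots,A_l \in \Xi$ with $\bM Z',Z. = \max_{1 \le i \le l}\bM Z',Z.^{A_i}$. For each $i$, the morphism $(X,B+A_i) \rar Z$ is a classical lc-trivial fibration with $(X,B+A_i)$ sub-log canonical over the generic point and with $\rk g_* \O X.(\lceil \bA.^*(X,B+A_i) \rceil)=1$; hence by the classical canonical bundle formula \cite{Amb04,FG14} the b-divisor $\bM Z'.^{A_i}$ is b-nef. Being $\Q$-Cartier, each $\bM Z'.^{A_i}$ descends onto some model, and passing to a common smooth resolution $\pi \colon \tilde Z \rar Z$ I may assume all of them, as well as $\bM Z'.$, descend onto $\tilde Z$. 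Then each $\bM Z',\tilde Z.^{A_i}$ is nef; moreover $\bM Z',\tilde Z. = \pi^*\bM Z',Z.$, and --- since $Z$ is smooth, so $\bM Z',Z.^{A_i}$ is $\Q$-Cartier, and $\bM Z'.^{A_i}$ is b-nef --- the negativity lemma gives $\bM Z',\tilde Z.^{A_i} \le \pi^*\bM Z',Z.^{A_i}$ with $\pi$-exceptional difference.

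It now suffices to show $\pi^*\bM Z',Z.$ is nef on $\tilde Z$, as nefness descends under birational morphisms. A $\pi$-exceptional curve meets $\pi^*\bM Z',Z.$ trivially. For a curve $C$ dominating a prime divisor $P \subset Z$, choose $i_0$ with $\mult_P\bM Z',Z. = \mult_P\bM Z',Z.^{A_{i_0}}$, so that $E \coloneqq \bM Z',Z. - \bM Z',Z.^{A_{i_0}} \ge 0$ has $P \notin \Supp E$. Then
$$
\pi^*\bM Z',Z. \cdot C = \pi^*\bM Z',Z.^{A_{i_0}} \cdot C + \pi^* E \cdot C \ge \bM Z',\tilde Z.^{A_{i_0}} \cdot C + \pi^* E \cdot C \ge 0,
$$
because $\pi^*\bM Z',Z.^{A_{i_0}} - \bM Z',\tilde Z.^{A_{i_0}}$ and $\pi^* E$ are effective and $C$ --- being not $\pi$-exceptional and not contained in either support --- meets both nonnegatively, while $\bM Z',\tilde Z.^{A_{i_0}}$ is nef. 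Thus $\bM Z',Z.$ is nef, and $\bM Z'.$ is b-nef.

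The main obstacle is the birational bookkeeping: $\bM Z'.$ descends onto $Z$ whereas the auxiliary $\bM Z'.^{A_i}$ descend onto a priori different models, and one must pass to a common resolution and argue that the pullback of $\bM Z',Z.$ --- which is merely a \emph{finite maximum} of the $\bM Z',Z.^{A_i}$, and so not automatically nef --- is nonetheless nef. A secondary subtlety is the passage from the generalized sub-pair to the genuine lc-trivial fibrations $(X,B+A_i) \rar Z$, which is precisely what the class $\Xi$ of Remark \ref{remark min max} (general $A$ making $(X,B+A)$ sub-log canonical over the generic point and preserving the rank-one condition) is set up to provide.
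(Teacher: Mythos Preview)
Your argument is essentially correct when $\dim Z = 2$: then prime divisors on $Z$ are curves, a curve $C \subset \tilde Z$ that is not $\pi$-exceptional maps onto a curve $P = \pi(C)$, and your max-of-nef-divisors computation goes through. This is precisely the content of Remark \ref{remark surface base} in the paper.

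However, when $\dim Z \geq 3$ your case analysis breaks down. A curve $C$ on $\tilde Z$ cannot dominate a prime divisor on $Z$ (which has dimension $\dim Z - 1 \geq 2$); at best $\pi(C)$ is a curve in $Z$, and your dichotomy ``$\pi$-exceptional'' versus ``dominates a prime divisor'' is not exhaustive. Even if one reinterprets the choice of $i_0$ as requiring $\pi(C) \not\subset \Supp(\bM Z',Z. - \bM Z',Z.^{A_{i_0}})$, this need not be possible: with $l=2$, say, $\pi(C)$ could lie in $P_1 \cap P_2$ with $P_j$ a component of $\Supp E_j$, so that $\pi(C) \subset \Supp E_i$ for every $i$. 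A related issue affects the claim that $C \not\subset \Supp(\pi^*\bM Z',Z.^{A_{i_0}} - \bM Z',\tilde Z.^{A_{i_0}})$: in dimension $\geq 3$, a $\pi$-exceptional divisor can contain curves that are not themselves contracted by $\pi$, so ``not $\pi$-exceptional'' does not force $C$ out of that support.

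The paper closes exactly this gap by reducing to the surface case. Arguing by contradiction, if $\bM Z',Z. \cdot C < 0$ for some curve $C$ on a model $Z$ where $\bM Z'.$ descends, one blows up $Z$ along (a desingularization of) $C$ and cuts by general hyperplanes to obtain a smooth surface $S''$ containing a curve $C''$ dominating $C$ with $\bM Z',Z''.|_{S''} \cdot C'' < 0$. By \cite[Lemma 3.1]{Flo14} the restriction $\bM Z',Z''.|_{S''}$ equals the moduli divisor $\bM S'',S''.$ of the induced fibration over $S''$, to which the surface argument of Remark \ref{remark surface base} (which is your argument, correctly restricted to $\dim = 2$) applies and yields the contradiction. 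So your approach and the paper's share the same core idea; what is missing in your proposal is this reduction-to-surfaces step.
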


\begin{proof}
By Theorem \ref{bM b-divisor}, we know that $\bM Z'.$ is a $\Q$-Cartier b-divisor.
Assume by contradiction that $\bM Z'.$ is not b-nef.
Then, for every model $Z$ where $\bM Z'.$ descends, there is a curve $C \subset Z$ such that $\bM Z',Z. \cdot C < 0$.

Without loss of generality, we may assume that $Z$ is smooth.
Furthermore, by the projection formula for cycles and by blowing up the singular points of $C$, we may assume that $C$ is smooth.

Now, let $\pi \colon  Z'' \rar Z$ be the blow-up of $Z$ along $C$.
By the projection formula, every curve $C'' \subset \pi \sups -1. (C)$ that dominates $C$ is such that $\bM Z',Z''. \cdot C'' < 0$.
Let $S'' \subset Z''$ be a smooth surface obtained by general hyperplane cuts.
Then, by \cite[Lemma 3.1]{Flo14}, we have $\bM Z',Z''.| \subs S''.= \bM S'',S''.$, where $\bM S''.$ is the moduli b-divisor of the induced fibration with base $S''$.
By the positivity of $S''$, there exists $C''$ as above with $C'' \subset S''$.
On the other hand, by Remark \ref{remark surface base}, $\bM S'',S''.$ is nef.
Thus, we get a contradiction, and the claim follows.
\end{proof}

We conclude considering the relation between the singularities of the source of the fibration and the ones of the generalized pair induced on the base.
In doing so, we follow ideas of Ambro \cite[Proposition 3.4]{Amb99}, \cite[Theorem 3.1]{Amb04}.

\begin{proposition} \label{inv adj fiber spaces}
Let $(X',B'+M')$ be a generalized sub-pair with data $X \rar X'$ and $M$.
Let $f \colon  X' \rar Z'$ be a contraction such that $\K X'. + B' + M' \sim \subs \R,f. 0$ and $(X',B'+M')$ is generalized sub-log canonical over the generic point of $Z'$.
Assume that a generalized pair structure $(Z',\bB Z'. + \bM Z'.)$ is induced on $Z'$.
Furthermore, let $g \colon  X \rar Z$ be a birational model of $f \colon X' \rar Z'$ such that $\bM Z'.$ descends to $Z$, and $M$ descends to $X$.
Then, $(Z,\bB Z',Z.)$ is sub-log canonical in a neighborhood of $z \in Z$ if and only if $(X,B)$ is sub-log canonical in a neighborhood of $g \sups -1. (z)$.
Furthermore, if $(X',B'+M')$ is generalized klt over the generic point of $Z'$, $(Z,\bB Z',Z.)$ is sub-klt in a neighborhood of $z \in Z$ if and only if $(X,B)$ is sub-klt in a neighborhood of $g \sups -1. (z)$.
\end{proposition}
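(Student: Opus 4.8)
The plan is to reduce the statement to the fiber-space adjunction identity and then compare log canonical places on source and base via a monotonicity argument. First I would choose a convenient birational model. Replacing $X'\to Z'$ by the given model $g\colon X\to Z$, we may assume $\bM Z'.$ descends to $Z$ and $M$ descends to $X$; passing to a further common resolution (using Remark \ref{base change property} to control how boundary and moduli parts transform) we may also assume that $g$ is prepared, that $Z$ is smooth, and that $\Supp(B)\cup g^{-1}(\Sigma)$ is simple normal crossing, where $\Sigma\supset\bB Z',Z.$ is the discriminant divisor. Since $M$ descends to $X$, the nef part contributes nothing to discrepancies, so the generalized sub-log canonical (resp.\ sub-klt) condition on $(X',B'+M')$ near the fiber over $z$ is equivalent to $(X,B)$ being sub-log canonical (resp.\ sub-klt) near $g^{-1}(z)$; likewise $(Z',\bB Z'.+\bM Z'.)$ being generalized sub-log canonical near $z$ is, after descent, the statement that $(Z,\bB Z',Z.)$ is sub-log canonical near $z$. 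So the theorem is really a statement comparing $(X,B)$ and $(Z,\bB Z',Z.)$, which is exactly the shape of Ambro's result.

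Next I would run the two directions. For the ``only if'' direction (adjunction), suppose $(Z,\bB Z',Z.)$ fails to be sub-log canonical near $z$: there is a divisor $E_Z$ over $Z$ with center through $z$ and $a_{E_Z}(Z,\bB Z',Z.)<-1$. I want to produce a divisor $E_X$ over $X$, with center mapping into a neighborhood of $g^{-1}(z)$, having $a_{E_X}(X,B)<-1$. The key is that the canonical bundle formula identity $\K X.+B\sim_{\R}g^*(\K Z.+\bB Z',Z.+\bM Z'.)$ pulls back to any higher model, and along a prime divisor $P\subset Z$ the coefficient $b_P$ in $\bB Z',Z.$ is $1-t_P$ where $t_P=\lct_{\eta_P}(X,B-g^*P\,;\,g^*P)$ over the generic point of $P$; unwinding this gives that the worst discrepancy over $P$ on $X$ is exactly $-b_P$. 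Iterating this over a tower of blow-ups realizing $E_Z$ (and using that each stratum is again prepared after the allowed modifications) shows that an excess $a_{E_Z}<-1$ forces a divisor over $X$ with discrepancy $<-1$. For the ``if'' direction (inversion of adjunction), the opposite: from the identity and the fact that $\bM Z'.$ is $\R$-Cartier on $Z$, pulling the equality back to a log resolution $\tilde X\to X$ dominating a log resolution $\tilde Z\to Z$ gives $\K{\tilde X}.+\tilde B=g^*(\K{\tilde Z}.+\bB{Z',\tilde Z}.+\bM{Z',\tilde Z}.)$ with $\bM{Z',\tilde Z}.$ the pullback of a divisor on $Z$; then a divisor on $\tilde Z$ with coefficient $>1$ in $\bB{Z',\tilde Z}.$ whose center meets a neighborhood of $z$ produces, along the fiber, a component of $\tilde B$ with coefficient $>1$ over $g^{-1}(z)$, contradicting sub-log canonicity of $(X,B)$. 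The sub-klt variant is identical with strict inequalities replaced throughout, using that in the sub-klt case the discriminant has all coefficients $<1$.

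The main obstacle I anticipate is the bookkeeping in the first (adjunction) direction: a single divisor $E_Z$ over $Z$ need not be a divisor on the fixed prepared model, so one has to keep re-preparing after each blow-up and track how $\bB Z',\tilde Z.$ and the discriminant behave under these modifications — in particular ensuring that the defining property ``coefficient along $P$ equals $1-\lct_{\eta_P}$'' is stable under the base changes and blow-ups used. This is precisely where Proposition \ref{lemma almost b-nef} and the functoriality discussed in Remark \ref{base change property} do the work, letting one compare the discriminant computed on different models; the residual point is the elementary fact that for a prepared morphism the log canonical threshold along a divisor on the base is computed by the fiberwise (indeed, after restricting to a curve transverse to $P$) geometry, which is \cite[Lemma 3.1]{Flo14} together with inversion of adjunction for usual pairs. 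Once that stability is in place, both implications follow by chasing the canonical bundle formula identity.
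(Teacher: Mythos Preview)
Your reduction is exactly right and is the whole content of the paper's proof: once $M$ descends to $X$ and $\bM Z'.$ descends to $Z$, the generalized discrepancies coincide with the ordinary ones (Remark \ref{remark moduli descends}), so the statement becomes literally the comparison of $(X,B)$ with $(Z,\bB Z',Z.)$, which is \cite[Proposition 3.4]{Amb99}. The paper does not reprove Ambro's argument; it simply observes that it applies verbatim.

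Your attempt to unpack Ambro's proof, however, has a genuine slip: both of your ``directions'' establish the same implication. In the paragraph labelled ``only if'' you assume $(Z,\bB Z',Z.)$ is not sub-lc and produce a bad divisor over $X$; in the paragraph labelled ``if'' you again start from a divisor on $\tilde Z$ with coefficient $>1$ and produce a bad component of $\tilde B$. Both are the contrapositive of ``$(X,B)$ sub-lc $\Rightarrow$ $(Z,\bB Z',Z.)$ sub-lc''. The reverse implication --- that a divisor $E$ over $X$ with $\mult_E B>1$ forces a divisor over $Z$ with coefficient $>1$ in $\bB Z'.$ --- is never argued. It is not hard: pass to a model $\tilde Z$ on which the image of $E$ is a prime divisor $P$, and use the defining formula $\mult_P \bB Z',\tilde Z. = 1 - \lct_{\eta_P}$ together with the hypothesis that $\bM Z'.$ descends to $Z$ (so $\bB Z',\tilde Z.$ is the log-pullback of $\bB Z',Z.$) to conclude. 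This is the step where the descent hypothesis on $\bM Z'.$ is actually used, and it is the direction that justifies calling the statement ``inversion of adjunction''. Your labels ``adjunction''/``inversion of adjunction'' are also swapped relative to the usual convention for fiber spaces. Fix the direction bookkeeping and supply the missing implication, and your sketch becomes a correct expansion of the citation.
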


\begin{proof}
The proof of \cite[Proposition 3.4]{Amb99} goes through verbatim.
\end{proof}

\begin{remark}{\em
As $\bB X'.$ and $\bB Z'.$ both descend to $X$ and $Z$ respectively, in the statement of Proposition \ref{inv adj fiber spaces}, we can equivalently replace sub-klt and sub-log canonical with their generalized versions.}
\end{remark}

\section{The case of effective boundary}

In this section, under the assumption that $B'$ is effective over the generic point of $Z'$, we weaken some conditions of Theorem \ref{lemma klt fibrations}.
The constraint on the horizontal part of $B'$ is due to results used in the proofs, such as \cite[Theorem 0.1]{Amb05} and the MMP \cite{BCHM}.
We start with a technical statement.

\begin{lemma} \label{lemma diagram of fibrations}
Let $(X',B' \subs X'.+M' \subs X'.)$ be a generalized pair with data $X$ and $M_X$.
Let $f \colon  X' \rar Z'$ be a contraction such that $\K X'. + B' \subs X'. + M' \subs X'. \sim \subs \R,f. 0$.
Also let $Y'$ be a variety such that there exist contractions $g \colon  X' \rar Y'$ and $h \colon  Y' \rar Z'$ satisfying $f=h \circ g$.
Let $\bB Z'.$ and $\bB Y'.$ be the boundary b-divisors induced on $Z'$ and $Y'$ by $f$ and $g$ respectively.
Also, denote by $\bD Z'.$ the boundary b-divisor induced on $Z'$ by $h$ and $\K Y'. + \bB Y',Y'. + \bM Y',Y'.$.
Assume that $\bM Y'.$ is a Cartier b-divisor.
Then, we have $\bB Z'. = \bD Z'.$.
\end{lemma}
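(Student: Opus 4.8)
The plan is to prove that $\bB Z'. = \bD Z'.$ by working on a sufficiently high common model where all the relevant b-divisors descend, and then comparing multiplicities prime divisor by prime divisor. First I would choose birational models $\tilde X$, $\tilde Y$, $\tilde Z$ of $X'$, $Y'$, $Z'$ fitting into a commutative diagram of morphisms $\tilde g \colon \tilde X \rar \tilde Y$, $\tilde h \colon \tilde Y \rar \tilde Z$, with $\tilde f = \tilde h \circ \tilde g$, chosen high enough that $M_X$ descends to $\tilde X$, the moduli b-divisor $\bM Y'.$ descends to $\tilde Y$ (here is where the hypothesis that $\bM Y'.$ is a \emph{Cartier} b-divisor is used — it guarantees this descent makes sense and that $\K \tilde Y. + \bB Y',\tilde Y. + \bM Y',\tilde Y.$ is $\R$-Cartier, so that adjunction for the fibration $\tilde h$ is well-defined), and $\tilde g$, $\tilde h$ are prepared in the sense of the standard normal crossing assumptions. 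Since the boundary b-divisors $\bB Z'.$, $\bD Z'.$, $\bB Y'.$ are well-defined Weil b-divisors (Remark \ref{remark definitions}), it suffices to check equality of their traces on $\tilde Z$.

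The key computational step is the transitivity of the construction of the boundary part. Fix a prime divisor $P \subset \tilde Z$ and, after localizing at $\eta_P$ (and, by a base-change/restriction argument as in \cite[Lemma 3.1]{Flo14}, possibly reducing to the case $\dim \tilde Z = 1$ as in the proof of Theorem \ref{bM b-divisor}), reduce to computing generalized log canonical thresholds over $\eta_P$. On one hand, by definition $\mult_P \bB Z',\tilde Z. = 1 - \mathrm{glct}_{\eta_P}(\tilde h \circ \tilde g)^* P$ computed on $(\tilde X, \tilde B)$, where $\K \tilde X. + \tilde B + \tilde M = \tilde f^*(\K X'. + B' + M')$. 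On the other hand, $\mult_P \bD Z',\tilde Z. = 1 - \mathrm{glct}_{\eta_P} \tilde h^* P$ computed with respect to $(\tilde Y, \bB Y',\tilde Y. + \bM Y',\tilde Y.)$, where $\K \tilde Y. + \bB Y',\tilde Y. + \bM Y',\tilde Y. = \tilde g^*(\K \tilde X. + \tilde B + \tilde M)$ pushed down — more precisely, $\bB Y',\tilde Y.$ and $\bM Y',\tilde Y.$ are defined so that $\K \tilde X. + \tilde B + \tilde M \sim_{\R} \tilde g^*(\K \tilde Y. + \bB Y',\tilde Y. + \bM Y',\tilde Y.)$. The content is then that computing the log canonical threshold of $(\tilde h \circ \tilde g)^* P$ on $\tilde X$ in one step equals first pushing the threshold data down along $\tilde g$ to get the induced pair on $\tilde Y$ and then computing the threshold of $\tilde h^* P$ there; this is exactly the generalized-pair analogue of Ambro's observation that the discrepancy b-divisor of a composite fibration is computed by iterated adjunction, and it follows from the compatibility of generalized adjunction for fiber spaces with crepant pullback together with the fact that $M_X$ and $\bM Y'.$ have descended, so they contribute nothing to the relevant singularities over $\eta_P$ — cf.\ the proof of Proposition \ref{inv adj fiber spaces} and Remark \ref{remark min max}.

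The main obstacle I expect is handling the moduli part correctly: one must be sure that when passing from the fibration $\tilde f$ to the factored pair on $\tilde Y$, the extra contribution $\bM Y',\tilde Y.$ is genuinely the nef (moduli) part of a \emph{generalized} pair structure on $\tilde Y$ rather than getting absorbed into the boundary, so that the composite threshold computation over $\eta_P$ sees the same divisor as the one-step computation. This is precisely why $\bM Y'.$ being Cartier is hypothesized: it lets us treat $(\tilde Y, \bB Y',\tilde Y. + \bM Y',\tilde Y.)$ as a legitimate generalized sub-pair with the moduli b-divisor descending, so that Remark \ref{remark min max} and the classical/generalized canonical bundle formula machinery apply and the threshold of $\tilde h^* P$ is unaffected by the choice of model below $\tilde Y$. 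Once this is in place, matching multiplicities along every prime $P \subset \tilde Z$ gives $\bB Z',\tilde Z. = \bD Z',\tilde Z.$, and since $\tilde Z$ was an arbitrary sufficiently high model, $\bB Z'. = \bD Z'.$, completing the proof. $\qed$
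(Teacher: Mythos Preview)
Your overall framework matches the paper's: pass to high models where the moduli b-divisors descend and the morphisms are prepared, then check $\mult_P \bB Z',\tilde Z. = \mult_P \bD Z',\tilde Z.$ for every prime $P$ on $\tilde Z$. The difficulty is that the ``key computational step'' you isolate --- that the one-step threshold of $\tilde f^*P$ on $\tilde X$ equals the two-step threshold (first form $\bB Y',\tilde Y.$, then take the threshold of $\tilde h^*P$ on $\tilde Y$) --- is not a formal consequence of the references you cite. Proposition \ref{inv adj fiber spaces} tells you that sub-lc-ness transfers between total space and base, but it does \emph{not} say that the boundary induced on $\tilde Y$ by $(\tilde X,\tilde B + t\tilde f^*P)$ equals $\bB Y',\tilde Y. + t\,\tilde h^*P$; in fact this is false in general, because the maxima defining the boundary coefficients are not linear in $t$. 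Remark \ref{remark min max} is about approximating the moduli part by ordinary pairs and is not relevant here. So the appeal to ``compatibility with crepant pullback'' is really a restatement of what has to be proved.

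What the paper does at this point is an explicit computation with Ambro's formula \cite[Remark 3.1.4]{Amb99}: writing $f^*P=\sum p^i R_i$, $g^*Q_j=\sum q^i_j R_i$, $h^*P=\sum r^j Q_j$, one has $\mult_P B_Z=\max_i (b_i+p^i-1)/p^i$ and $\mult_P D_Z=\max_j\max_{i:g(R_i)=Q_j}(b_i+r^jq^i_j-1)/(r^jq^i_j)$. The two-way inequality is then checked by hand, using that $p^i=\sum_j r^j q^i_j$ and that $q^i_j=0$ unless $g(R_i)=Q_j$ (so only one term survives in the sum when $R_i$ maps divisorially). This is the substantive content you are missing; your plan should replace the hand-wave with this computation.
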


\begin{proof}
Since we are comparing b-divisors, we are free to replace each variety with a higher model.
Thus, we can replace $X'$, $Y'$ and $Z'$ with models such that:
\begin{itemize}
\item the moduli b-divisors $\bM X'.$ and $\bM Y'.$ descend onto $X$ and $Y$ respectively.
We will denote their traces by $M_X$ and $M_Y$;
\item the morphisms $X \rar Z$, $X \rar Y$ and $Y \rar Z$ are all prepared.
By abusing notation, we will still denote those as $f$, $g$ and $h$ respectively.
\end{itemize}
For ease of notation, we will write $B_X = \bB X',X.$, $B_Y= \bB Y',Y.$, $B_Z= \bB Z',Z.$ and $D_Z= \bD Z',Z.$.
Let $P$ be a prime divisor in $Z$.
We have to compare $\mult_P B_Z$ and $\mult_P D_Z$.

Let $R_1, \ldots, R_k$ be the prime divisors in $X$ that dominate $P$.
Similarly, denote by $Q_1, \ldots, Q_l$ the prime divisors in $Y$ dominating $P$.
Notice that $g(R_i)$ is not necessarily a divisor; in case it is, we have $g(R_i)=Q \subs j(i).$ for some $1 \leq j(i) \leq l$.

By \cite[Remark 3.1.4]{Amb99}, the components of $B_X$ that dominate $Z$ do not contribute to the computations.
Thus, we may assume $B_X= \sum \subs i=1. ^k b_i R_i$ over $\eta_P$, the generic point of $P$.
We can write $f^*P = \sum \subs i=1. ^k p^i R_i$, $g^* Q_j = \sum \subs i=1.  ^k q^i_j R_i$, and $h^*P= \sum \subs j=1.^l r^j Q_j$.
Since $f^*=g^* \circ h^*$, we get $p^i= \sum \subs j=1.^l r^j q^i_j$.

For the formula used in the following computations, we refer to \cite[Remark 3.1.4]{Amb99}.
We have
$$
\mult_P B_Z = \max_i \frac{b_i+p^i-1}{p^i}=\max_i \frac{b_i+\sum  r^j q^i_j-1}{\sum  r^j q^i_j},
$$
and
$$
\mult \subs Q_j. B_Y = \max \subs i|g(R_i)=Q_j. \frac{b_i+q^i_j-1}{q^i_j}.
$$
This implies the following formula
$$
\mult_P D_Z = \max_j \frac{\mult \subs Q_j.B_Y+r^j-1}{r^j}=\max_j \max \subs i|g(R_i)=Q_j. \frac{b_i+r^j q^i_j-1}{r^j q^i_j}. 
$$

Now, we may assume that $\mult_P B_Z$ is computed by $R_1$, and that $g(R_1)=Q_1$.
Thus, we have $q^1_j = 0$ if $j \neq 1$.
Therefore, we have
$$
\mult_P B_Z = \frac{b_1+r^1 q^1_1-1}{r^1 q^1_1} \leq \max_j \max \subs i|g(R_i)=Q_j. \frac{b_i+r^j q^i_j-1}{r^j q^i_j} = \mult_P D_Z.
$$
As already mentioned, if $g(R_i)$ is a divisor, then $q^i_j = 0$ if $j \neq j(i)$.
Thus, we have
\begin{equation*}
\begin{split}
\mult_P D_Z &= \max_j \max \subs i|g(R_i)=Q_j. \frac{b_i+r^j q^i_j-1}{r^j q^i_j} = \max \subs i|g(R_i) \; \mathrm{divisor}. \frac{b_i+r \sups j(i). q^i \subs j(i).-1}{r \sups j(i). q^i \subs j(i).}\\
&= \max \subs i|g(R_i) \; \mathrm{divisor}. \frac{b_i+\sum r \sups j. q^i \subs j.-1}{\sum r \sups j. q^i \subs j.} \leq \max_i \frac{b_i+\sum  r^j q^i_j-1}{\sum  r^j q^i_j} = \mult_P B_Z.
\end{split}
\end{equation*}
Hence, as $\mult_P D_Z = \mult_P B_Z$ and $P$ is arbitrary, we conclude that $\bB Z'. = \bD Z'.$.
\end{proof}

Before proving Theorem \ref{full generality theorem firbations}, we deal with a particular case of it.

\begin{lemma} \label{lemma MFS}
Let $X'$ be a {projective} $\Q$-factorial klt variety, and let $(X',B'+M')$ be a generalized sub-pair with data $X \rar X'$ and $M$.
Assume that $B'$, $M'$ and $M$ are $\Q$-divisors.
Let $f  \colon  X' \rar Z'$ be a contraction {to a projective variety $Z'$} such that $\K X'. + B' + M' \sim \subs \Q, f. 0$, $\rho(X'/Z') =1$ and $M'$ is relatively ample.
Also, let $(X',B'+M')$ be generalized log canonical over the generic point of $Z'$.
Then, the b-divisor $\bM Z'.$ is $\Q$-Cartier and b-nef.
\end{lemma}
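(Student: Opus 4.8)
The plan is to reduce the statement to Theorem~\ref{lemma klt fibrations}. By Remark~\ref{base change property} the property that $\mathbf{M}_{Z'}$ is $\Q$-Cartier and b-nef is unaffected by generically finite base change on $Z'$ and by replacing $X'$ and the top model $X$ with higher birational models; since moreover the base $V$ influences neither the singularities of the generalized pair nor the b-divisors $\mathbf{B}_{Z'}$ and $\mathbf{M}_{Z'}$, we may take $V = Z'$. Comparing the hypotheses, it then suffices to arrange, after such modifications: \textbf{(a)} that the moduli part of the generalized pair is relatively semi-ample over $Z'$; and \textbf{(b)} that $\rk f_{*}\mathcal{O}_{X'}(\lceil \mathbf{A}^{*}(X',B'+M')\rceil) = 1$.

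For \textbf{(a)} I would exploit both halves of the Mori fibre space hypothesis. From $\K X'. + B' + M' \sim_{\Q,f} 0$ we get $M' \sim_{\Q,f} -(\K X'. + B')$, which is relatively ample over $Z'$ by assumption; in particular $M'$ is nef over $V = Z'$. On the top model $\mu\colon X \to X'$ the moduli divisor $M$ is nef over $\C$, hence over $X'$, and $\mu_{*}M = M'$, so the negativity lemma gives $\mu^{*}M' - M \geq 0$, effective and $\mu$-exceptional. One may then run over $X'$ an MMP for $(X,\Delta)$, with an auxiliary klt boundary $\Delta$ chosen as in the proofs of Theorem~\ref{generalized weak dlt model} and Proposition~\ref{weak generalized dlt model with extraction} so that $\K X. + \Delta \sim_{\R,\mu}$ (an effective $\mu$-exceptional divisor) $-$ (a $\mu$-ample divisor); the programme contracts every $\mu$-exceptional divisor, so on the output model the moduli part becomes the pull-back of the $f$-ample divisor $M'$, in particular relatively semi-ample over $Z'$. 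This modification is crepant, hence leaves $\mathbf{B}_{Z'}$ and $\mathbf{M}_{Z'}$ unchanged, and we rename the output $(X',B'+M')$. An alternative route for \textbf{(a)}: since $\K X. + B_X + M = \mu^{*}(\K X'. + B' + M')$ one has $M \sim_{\Q,Z'} -(\K X. + B_X)$, so once $(X,B_X)$ is seen to be klt of Fano type over $Z'$, the relative base-point-free theorem gives the relative semi-ampleness of $M$ directly.

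For \textbf{(b)}: after \textbf{(a)} the moduli part descends on $X'$, so by Remark~\ref{remark moduli descends} the b-divisor $\mathbf{A}^{*}(X',B'+M')$ agrees with the b-divisor $\mathbf{A}^{*}(X',B')$ of the ordinary sub-pair $(X',B')$. The geometric generic fibre $F$ of $f$ is a $\Q$-factorial klt variety with $\rho(F) = 1$ and $-(\K F. + B'|_{F})$ ample, i.e.\ a log Fano variety of Picard number one. On a log resolution $W \to F$ one computes that $\lceil \mathbf{A}^{*}(F,B'|_{F})\rceil$ is supported on the exceptional locus of $W \to F$ together with the round-up of the negative part of $B'|_{F}$; since $B'$ is effective over the generic point of $Z'$ this last contribution vanishes, so $\lceil \mathbf{A}^{*}\rceil$ is exceptional over $F$ and $f_{*}\mathcal{O}_{X'}(\lceil \mathbf{A}^{*}\rceil)$ has rank one. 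With \textbf{(a)} and \textbf{(b)} in hand, Theorem~\ref{lemma klt fibrations} applies and yields that $\mathbf{M}_{Z'}$ is $\Q$-Cartier and b-nef.

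The main obstacle is step \textbf{(a)}: turning the relative ampleness of $M'$ on $X'$ and the mere nefness of $M$ on the top model into relative semi-ampleness of the moduli part over a single model — that is, controlling and eliminating the $\mu$-exceptional excess $\mu^{*}M' - M$ (respectively, checking the Fano-type hypothesis needed for the relative base-point-free theorem). Step \textbf{(b)} needs a little additional care if one does not know a priori that $B'$ is effective over the generic point of $Z'$; this is the case in the situations where the lemma is invoked.
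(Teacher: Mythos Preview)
Your overall plan---reduce to Theorem~\ref{lemma klt fibrations}---matches the paper, but step~\textbf{(a)} contains a genuine gap, and it is precisely the difficulty the lemma is designed to overcome. Running an MMP on $X$ over $X'$ that contracts every $\mu$-exceptional divisor simply lands you back on $X'$ (it is already $\Q$-factorial), and the moduli part you see there is $M'$. But the b-divisor $\overline{M'}$ is \emph{not} $\overline{M}$: they differ by the effective $\mu$-exceptional divisor $\mu^*M'-M$, and replacing one by the other changes the generalized discrepancies, hence the generalized log canonical thresholds along divisors of $Z'$, hence $\mathbf{B}_{Z'}$ and $\mathbf{M}_{Z'}$ themselves. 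The modification is not ``crepant'' for the generalized-pair structure; it is a change of data. Your alternative route fares no better: on the resolution $B_X$ is only a sub-boundary and over the generic point the pair may be strictly lc rather than klt, so there is no evident effective $\Delta$ with $(X,\Delta)$ klt and $aM-(\K X.+\Delta)$ nef and big over $Z'$. In fact $M$ need not be relatively semi-ample over $Z'$ at all.

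The paper does not try to make $M$ itself semi-ample; it uses a perturbation-and-limit argument. For rational $0<\delta\ll\epsilon\ll 1$ it rewrites $\K X.+B+M$, up to pull-back from $Z'$, as $\K X.+(B_\epsilon+\delta E-\epsilon F)+N_\epsilon$, where $E\geq 0$ is $\mu$-exceptional with $-E$ $\mu$-ample, $F=\mu^*M'-M\geq 0$, $B_\epsilon$ is the log pull-back of $(1-\epsilon)(B')^h+(B')^v+(1-\epsilon)M'$, and
\[
N_\epsilon=(1-\epsilon)M+\epsilon\,\mu^*\bigl(M'+(B')^h+f^*H\bigr)-\delta E
\]
is genuinely ample (nef plus a small ample correction). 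The new boundary is sub-klt over $\eta_{Z'}$ because of the factor $(1-\epsilon)$, and its push-forward to $X'$ is effective over $\eta_{Z'}$, so the rank condition holds and Theorem~\ref{lemma klt fibrations} applies to the $\epsilon$-pair, producing b-nef $\mathbf{M}_{Z'}^\epsilon$. Since $\mathrm{Supp}(B)\cup\mathrm{Supp}(E)\cup\mathrm{Supp}(F)$ is independent of $\epsilon$, all $\mathbf{M}_{Z'}^\epsilon$ descend to one common model, and $\mathbf{M}_{Z'}=\lim_{\epsilon\to 0}\mathbf{M}_{Z'}^\epsilon$ is then $\Q$-Cartier and b-nef. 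The key idea you are missing is this perturbation trick: trade the intractable semi-ampleness of $M$ for ampleness of a nearby divisor at the cost of a controlled $\epsilon$-error, and then let $\epsilon\to 0$.
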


\begin{proof}
Since $X'$ is $\Q$-factorial and klt, and $(X',B'+M')$ is generalized log canonical over $\eta \subs Z'.$, for any rational number $0 < \epsilon \ll 1$ the generalized pair $(X',(1-\epsilon)(B'+M'))$ with data $X$ and $(1-\epsilon)M$ is generalized klt over $\eta \subs Z'.$.
Since $\rho(X'/Z')=1$, $(B')^h$ is either $0$ or relatively ample.
Let $H$ be an ample divisor on $Z'$ such that $f^*H + (B')^h + M'$ is ample.
Also, write $\pi  \colon  X \rar X'$, and fix an effective and $\pi$-exceptional divisor $E$ such that $-E$ is $\pi$-ample.
Finally, we have $\pi^* M' = M - F$, where $F \geq 0$ is $\pi$-exceptional.

Let $B_\epsilon$ be defined by the identity
$$
\K X. + B_\epsilon + (1- \epsilon)M = \pi^*(\K X'. + (1-\epsilon)(B')^h + (B')^v + (1-\epsilon) M'),
$$
and set $B \coloneqq B_0$.
Then, for rational numbers $0 < \delta \ll \epsilon \ll 1$, we have
$$
\K X. + B + M \sim \subs \Q,Z'. \K X. + (B_\epsilon + \delta E - \epsilon F) + ((1-\epsilon)M + \epsilon \pi^* (M' + (B')^h + f^* H) - \delta E).
$$
Since $(X',(1-\epsilon)(B'+M'))$ is generalized klt over $\eta \subs Z'.$ and $0 < \delta = \delta(\epsilon) \ll \epsilon$, the sub-pair $(X,B_\epsilon + \delta E - \epsilon F)$ is sub-klt over $\eta \subs Z'.$.
Furthermore, as $\pi^* (M' + B' + f^* H) - \delta E)$ is ample by construction, then so is $M + \epsilon \pi^* (M' + B' + f^* H) - \delta E)$.
Finally, we have that $\pi_*(B_\epsilon + \delta E - \epsilon F)$ is effective over $\eta \subs Z'.$.

Therefore, the generalized sub-pair 
$$
(X,(B_\epsilon + \delta E - \epsilon F) + ((1-\epsilon)M + \epsilon \pi^* (M' + B' + f^* H) - \delta E))
$$
satisfies the hypotheses of Theorem \ref{lemma klt fibrations}.
This provides b-divisors $\bB Z'.^\epsilon$ and $\bM Z'. ^\epsilon$.
As $\Supp (B) \cup \Supp(E) \cup \Supp(F)$ is independent of $\epsilon$, by the proof of Theorem \ref{lemma klt fibrations}, the b-divisors $\bM Z'. ^\epsilon$ descend to the same higher model of $Z'$ for all $0 < \epsilon \ll 1$.
Thus, as the b-divisors $\bB Z'.$ and $\bM Z'.$ induced by $(X,B+M)$ satisfy
$$
\bB Z'. = \lim \subs \epsilon \to 0. \bB Z'.^\epsilon, \quad \bM Z'. = \lim \subs \epsilon \to 0. \bM Z'.^\epsilon,
$$
we conclude that $\bM Z'.$ is $\Q$-Cartier and b-nef.
\end{proof}

Now, we are ready to prove Theorem \ref{full generality theorem firbations}.

\begin{proof}[Proof of Theorem \ref{full generality theorem firbations}]
We will prove the statement by induction on the relative dimension of the fibration.
{Since $Z'$ is proper, by Remark \ref{base change property} and Chow's lemma, we may assume that $Z'$ is projective.}
By Theorem \ref{generalized weak dlt model}, we may assume that $X'$ is $\Q$-factorial, and that $(X',(B')^h)$ is dlt.
Notice that Theorem \ref{generalized weak dlt model} applies, as we can pull back divisors from $Z'$ to guarantee that $B'$ is effective.
Let $X' \subs \overline{\eta}_{Z'}.$ be the geometric generic fiber.
Then, by assumption, $(X' \subs \overline{\eta}_{Z'}.,B'\subs \overline{\eta}_{Z'}.)$ is dlt.
Notice that, as $B'$ is effective over the generic point of $Z'$, the technical assumptions regarding $\rk f_* \O X'. (\lceil \bA.^*(X',B'+M') \rceil)=1$ are automatically satisfied \cite[Remark 2.2]{Amb04}.

Assume $M'$ is numerically trivial along $X' \subs \overline{\eta}_{Z'}.$.
Then, by \cite[Theorem 1.2]{Gon13}, we have $\K X'_{\overline{\eta}_{Z'}}. + B'_{\overline{\eta}_{Z'}} \sim_\Q M'_{\overline{\eta}_{Z'}} \sim_\Q 0$.
Therefore, there is a dense open subset $U' \subset Z'$ such that $M'|\subs X'_{U'}. \sim_{\Q,U'} 0$, where $X' \subs U'.$ denotes the inverse image of $U'$ in $X'$.
Let $X \subs U'.$ be the inverse image of $U'$ in $X$.
Write $\alpha  \colon  X \rar X'$.
By the negativity lemma, $M = \alpha^* M' - E$, where $E \geq 0$ is $\alpha$-exceptional.
As $M' \subs U'.$ is trivial along the fibers of $X' \subs U'. \rar U'$ and $M$ is nef, $E$ does not dominate $Z'$.
Therefore, up to shrinking $U'$, we have that $M$ is trivial over $U'$.
Now, by Remark \ref{base change property}, we may assume that $X \rar Z'$ is weakly semi-stable.
We have $M \sim \subs \Q,Z'. F$, where $F$ is exceptional over $Z'$.
As the morphism $X \rar Z'$ is flat, the image in $Z'$ of each component of $F$ is a divisor.
Thus, up to replacing $F$ in its $\Q$-linear equivalence over $Z'$, we may assume that $F \geq 0$ and that $F$ is very exceptional over $Z'$ \cite[Definition 3.1]{Bir12}.
Then, by \cite[Lemma 3.3]{Bir12}, we have $F=0$.
Thus, it follows that $M \sim \subs \Q,Z'. 0$.
Therefore, we can apply the classic theory \cite{Amb04,FG14}.

Hence, we may assume that $M' \subs \overline{\eta}_{Z'}.$ is not numerically trivial.
In particular, $\K X'. + (B')^h$ is not pseudo-effective over $Z'$.
So, we can run a $(\K X'. + (B')^h)$-MMP relative to $Z'$ with scaling of an ample divisor.
Since $(X',(B')^h)$ is dlt and $\K X'. + (B')^h$ is not pseudo-effective over $Z'$, this terminates with a Mori fiber space $g  \colon  X'' \rar Y$ over $Z'$ \cite{BCHM}.
Then, we can apply Lemma \ref{lemma MFS} to $g  \colon  X'' \rar Y$ and $(X'',B''+M'')$, as $M''$ is ample over $Y$ by construction.
We induce a generalized pair $(Y,\bB Y. + \bM Y.)$ on $Y$.

In case $\dim Y = \dim Z'$, $Y \rar Z'$ is birational, and we are done.
In particular, this proves the case when $\dim X' - \dim Z' = 1$.
Thus, we may assume that $\dim Z' < \dim Y$.
Notice that $Y \rar Z'$ has connected fibers.
Then, by construction and Proposition \ref{inv adj fiber spaces}, the generalized pair $(Y,\bB Y. + \bM Y.)$ and the fibration $Y \rar Z'$ satisfy the hypotheses of the statement with smaller relative dimension.
By the inductive hypothesis, the statement applies and provides a $\Q$-Cartier b-nef b-divisor on $Z'$.
By Lemma \ref{lemma diagram of fibrations}, it is the same b-divisor induced by $(X,B+M)$.
This proves the inductive step.
\end{proof}

As an immediate consequence, we recover a result due to Chen and Zhang \cite{CZ13}.
The idea to apply Theorem \ref{full generality theorem firbations} to the following setup was suggested by Jingjun Han.

\begin{corollary}[{\cite[Main Theorem]{CZ13}}] \label{corollary jingjun}
Let $(X,B)$ be a projective log canonical pair such that $-(K_X+B)$ is nef.
Let $f \colon  X \rar Y$ be a surjective morphism, where $Y$ is projective and $\K Y.$ is $\Q$-Cartier.
Then $-\K Y.$ is pseudo-effective.
\end{corollary}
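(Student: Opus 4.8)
Proof proposal for Corollary \ref{corollary jingjun}.

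The plan is to view $-(\K X.+B)$ as the nef part of an auxiliary generalized pair over the Stein factorization of $f$, and then push the resulting positivity down to $Y$ through the generalized canonical bundle formula. First I would reduce to the case where $B$ is a $\Q$-divisor (a harmless normalization; the $\R$-coefficient case follows in the same way from the $\R$-analogue of Theorem \ref{full generality theorem firbations}) and replace $f$ by its Stein factorization $X \xrightarrow{g} W \xrightarrow{h} Y$, where $g$ is a contraction, $h$ is finite, and $W$ is normal and projective. Now set $M \coloneqq M' \coloneqq -(\K X.+B)$: this is a nef $\Q$-Cartier $\Q$-divisor on $X$, so $(X,B+M')$, with data $\mathrm{id}\colon X \rar X$ and $M$, is a generalized pair with $\K X.+B+M' = 0$, hence $\sim \subs \Q. 0$ and a fortiori $\sim \subs \Q, g. 0$. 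Since $M$ descends on $X$, the generalized singularities of $(X,B+M')$ coincide with those of $(X,B)$; thus $(X,B+M')$ is generalized log canonical, in particular over the generic point of $W$.

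Next I would apply Theorem \ref{full generality theorem firbations} to $g\colon (X,B+M') \rar W$, which yields that $\bM W.$ is a $\Q$-Cartier, b-nef b-divisor. Writing $B_W \coloneqq \bB W,W.$ and $M_W \coloneqq \bM W,W.$, generalized adjunction for fiber spaces gives $\K X.+B+M' \sim \subs \Q. g^*(\K W. + B_W + M_W)$, and since the left-hand side is $\Q$-linearly trivial we may take the corresponding $L_W$ to be $0$, so that
$$
-\K W. \sim \subs \Q. B_W + M_W .
$$
Two positivity observations finish the analysis on $W$. First, $B_W \geq 0$: for a prime divisor $D \subset W$, the variety $W$ is smooth at $\eta_D$, so $g^*D$ is defined there and has a component $D_1$ of coefficient $m_1 \geq 1$; since $\mult \subs D_1.(B) \in [0,1]$, we get ${\rm glct} \subs \eta_D.(\K X.+B+M'; g^*D) \leq 1$, whence $b_D = 1 - t_D \geq 0$. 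Second, $M_W$ is pseudo-effective, being the trace on $W$ of the b-nef b-divisor $\bM W.$, i.e.\ the pushforward to $W$ of a nef divisor from a model where $\bM W.$ descends. Hence $-\K W. \sim \subs \Q. B_W + M_W$ is pseudo-effective.

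Finally I would transfer this to $Y$ through the finite map $h$. The ramification formula $\K W. = h^*\K Y. + R$ with $R \geq 0$ is available because $\K Y.$ is $\Q$-Cartier, so $-h^*\K Y. = -\K W. + R \sim \subs \Q. B_W + M_W + R$. Pushing forward by the finite surjective $h$ and using the projection formula, $(\deg h)(-\K Y.) = h_*(-h^*\K Y.) \sim \subs \Q. h_*B_W + h_*R + h_*M_W$, which is a sum of effective divisors and the pseudo-effective class $h_*M_W$; therefore $-\K Y.$ is pseudo-effective. The only genuinely delicate point is the input: one must recognize that Theorem \ref{full generality theorem firbations} applies to the artificially built generalized pair $\bigl(X, B - (\K X.+B)\bigr)$ over the Stein factorization, and then verify that the induced discriminant b-divisor is effective and the induced moduli b-divisor is pseudo-effective; the Stein factorization, the $\Q$-coefficient reduction, and the descent of positivity along $h$ are all routine.
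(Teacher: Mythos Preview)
Your proposal is correct and follows essentially the same approach as the paper: build the generalized pair $(X,B+M)$ with $M=-(\K X.+B)$, take the Stein factorization, apply Theorem \ref{full generality theorem firbations} to obtain $-\K W. \sim_\Q B_W + M_W$ with $B_W \geq 0$ and $M_W$ pseudo-effective, and then descend the pseudo-effectivity of $-h^*\K Y.$ along the finite map $h$. The only cosmetic difference is in the final step, where the paper invokes \cite[Theorem 1.20]{Fuj83} applied to $-\K Y. + tA$ while you push forward directly via $h_*$ and the projection formula; both are valid ways to conclude that $-\K Y.$ is pseudo-effective.
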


\begin{proof}
Define $M \coloneqq -(\K X. + B)$.
Then, the generalized pair $(X,B+M)$ is generalized log canonical.
Let $g \colon  X \rar Z$ and $h  \colon  Z \rar Y$ be the morphisms induced by the Stein factorization of $f$.

Then, by Theorem \ref{full generality theorem firbations}, $(X,B+M)$ induces a generalized log canonical pair $(Z,\bB Z. + \bM Z.)$ on $Z$.
Set $B_Z \coloneqq \bB Z,Z.$ and $M_Z \coloneqq \bM Z,Z.$.
Since $\K X. + B + M \sim_\Q 0$, we have $\K Z. + B_Z + M_Z \sim_\Q 0$. 

Now, since $h$ is finite, by the Riemann-Hurwitz formula, we have $h^* \K Y.= \K Z. - R$, where $R \geq 0$.
Thus, we get
$$
- \K Z. + R \sim_\Q B_Z + M_Z + R.
$$
As $B_Z \geq 0$ and $M_Z$ is pseudo-effective, $-\K Z. + R$ is pseudo-effective.
Thus, as $h$ is finite, we can apply \cite[Theorem 1.20]{Fuj83} to $-\K Y. + tA$ for $A$ ample on $Y$ and $0< t \ll 1$ to conclude that $- \K Y.$ is pseudo-effective.
\end{proof}

Now, using ideas of Fujino and Gongyo \cite{FG14}, we study the relation between the generalized pair induced on $Z'$ by $(X',B'+M')$ and the one induced by a generalized log canonical center of $(X',B'+M')$ dominating $Z'$.

\begin{theorem} \label{FG comparison}
Let $(X',B'+M')$ be a {projective} generalized sub-pair with data $X \rar X'$ and $M$.
Assume that $B'$, $M'$ and $M$ are $\Q$-divisors.
Let $f  \colon  X' \rar Z'$ be a contraction such that $\K X'. + B' + M' \sim \subs \Q, f. 0$.
Also, let $(X',B'+M')$ be generalized log canonical and generalized dlt over the generic point of $Z'$.
Then, for any generalized log canonical center $W'$ of $(X',B'+M')$ dominating $Z'$, we have $\bM Z'.= \bM Z'. \sups W'.$ and $\bB Z'.= \bB Z'. \sups W'.$, where these are the b-divisors induced on $Z'$ by $X'$ and $W'$ respectively.
\end{theorem}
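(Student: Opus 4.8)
The plan is to reduce to the statement of Lemma~\ref{lemma diagram of fibrations}, which compares the boundary b-divisors induced on $Z'$ by two successive fibrations once the intermediate moduli b-divisor is Cartier; the main work is thus to realize $W' \rar Z'$ as such an intermediate contraction and to show that the composite data is crepant. First I would pass to a convenient model: using Theorem~\ref{generalized weak dlt model} (or Proposition~\ref{weak generalized dlt model with extraction}), replace $X'$ by a $\Q$-factorial generalized dlt model so that the chosen generalized log canonical place $E$ of $W'$ is a prime divisor $S$ with coefficient $1$, and $(X',(B')^{=1})$ is plt along $S$ after further perturbation. Since everything in sight is a b-divisor on $Z'$, this replacement is harmless by Remark~\ref{base change property}. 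Divisorial generalized adjunction then induces $(S,B_S+M_S)$ with $\K S. + B_S + M_S = (\K X'. + B' + M')|_S$, and restricting $\K X'. + B' + M' \sim_{\Q,f} 0$ to $S$ gives $\K S. + B_S + M_S \sim_{\Q} 0$ over $Z'$.

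Next I would factor the restricted fibration $S \rar Z'$ through its image $W'$ (up to replacing $W'$ by a higher model), obtaining contractions $S \rar W'$ and $W' \rar Z'$. Applying generalized adjunction for fiber spaces to $S \rar W'$ produces the generalized pair structure $(W',\bB W'. + \bM W'.)$, and by Theorem~\ref{lemma klt fibrations} (applicable over the generic point of $W'$ since $(X',B'+M')$ is generalized log canonical and dlt there, and $M$ is semi-ample after the usual perturbation of Remark~\ref{remark min max}, so that the rank-one condition on $f_* \O.(\lceil \bA.^* \rceil)$ holds) the b-divisor $\bM W'.$ is $\Q$-Cartier and b-nef. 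In particular $\bM W'.$ is a Cartier b-divisor after passing to a suitable model, which is precisely the hypothesis needed to invoke Lemma~\ref{lemma diagram of fibrations} with the chain $S \rar W' \rar Z'$.

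Now Lemma~\ref{lemma diagram of fibrations}, applied to $(S, B_S + M_S) \rar Z'$ factoring through $W'$, yields $\bB Z'. \sups S. = \bD Z'.$, where $\bD Z'.$ is the boundary b-divisor induced on $Z'$ by $W' \rar Z'$ together with $\K W'. + \bB W',W'. + \bM W',W'.$; combined with the matching statement for the moduli parts (which follows formally, since $L_{Z'}$ is the same in both constructions and $M_{Z'}$ is defined as $L_{Z'} - (\K Z'. + B_{Z'})$), this shows that the generalized pair on $Z'$ induced via $W'$ equals the one induced via $S$. It remains to identify the generalized pair on $Z'$ induced by $S$ with the one induced by $X'$: this is a crepant-descent argument, using that $\K S. + B_S + M_S = (\K X'. + B' + M')|_S$ and that $S$ is a generalized log canonical place of $(X',B'+M')$, so the log canonical thresholds $t_{D'}$ computing $\bB Z'.$ agree whether measured on $X'$ or on $S$ — this is the generalized analogue of the fact (used by Fujino--Gongyo \cite{FG14}) that adjunction to a log canonical center does not change the discriminant on the base. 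The main obstacle is this last identification: one must check carefully that restricting to the glc place $S$ does not lose or create discrepancy $\le -1$ over divisors of $Z'$, i.e. that $\lct_{\eta_{D'}}(X',B'+M';f^*D') = \lct_{\eta_{D'}}(S, B_S+M_S; (S \rar Z')^* D')$ for every prime $D' \subset Z'$, which one handles by localizing over $\eta_{D'}$, taking a common log resolution where $M$ descends, and comparing discrepancies along divisors over $\eta_{D'}$ using inversion of adjunction for the plt pair $(X', S + \cdots)$ as in Theorem~\ref{Birkar inversion of adjuction}.
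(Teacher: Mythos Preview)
Your overall architecture is sound, and your use of Lemma~\ref{lemma diagram of fibrations} on the chain $S \rar W' \rar Z'$ is a clean way to reduce to the codimension-one comparison (the paper instead reduces by induction on $\operatorname{codim} W'$, peeling off one $P_i'$ at a time). Either route leaves the same core task: showing that the boundary b-divisor on $Z'$ induced by $X'$ agrees with the one induced by the horizontal divisor $S$.

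The gap is in your handling of this last step. Birkar's inversion of adjunction (Theorem~\ref{Birkar inversion of adjuction}) only controls the singularities of $(X',B'+M'+t f^*D')$ \emph{near} $S$. It says nothing about a vertical divisor over $\eta_{D'}$ lying in a different irreducible component of the fiber, disjoint from $S$; such a divisor can perfectly well compute the generalized lct on $X'$ without being visible from $S$, forcing $t_{D'}^{X'} < t_{D'}^{S}$. So ``localize, resolve, and apply inversion for the plt pair'' is not enough: you must first arrange that the threshold over every prime $D'$ is achieved along a divisor that actually meets $S$.

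This is exactly what the paper spends most of the proof doing. After a generically finite base change to a semi-stable model $V \rar T$ and cutting by hyperplanes so that $T$ is a curve (hence the horizontal divisor $\hat\Gamma$ meets \emph{every} vertical component), one runs an MMP over $T$ that contracts precisely the vertical components where $\operatorname{mult}(B_V + \sum_P b_P \pi^*P) < 1$. After this MMP every surviving vertical component has coefficient exactly $1$, so adding $\delta \hat\pi^* P$ makes the pair non-glc along \emph{every} vertical component over $P$, in particular along one meeting $\hat\Gamma$; only now does Birkar's inversion bite and force $b_P = b_P^{\min}$. Your proposal needs to incorporate (some version of) this reduction --- the semi-stable base change, the cut to a curve, and the contraction of the non-extremal vertical divisors --- before the inversion-of-adjunction step can conclude anything.
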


\begin{remark} \label{remark compare}
{\em
In the setup of Theorem \ref{FG comparison}, $W'$ is a stratum of $(B')^h$, and therefore inherits a structure of generalized pair $(W',B \subs W'. + M \subs W'.)$ by repeated divisorial adjunction.
By \cite[Definition 4.7]{BZ}, $(W',B \subs W'. + M \subs W'.)$ is generalized log canonical over the generic point of $Z'$.
Also, notice that $W' \rar Z'$ may not have connected fibers.
Therefore, by comparing the b-divisors induced by $X'$ and $W'$, we implicitly allow generically finite base changes, which are allowed by Remark \ref{base change property}.
}
\end{remark}

\begin{proof}
First, we reduce to the case when $W'$ is a divisor.
Fix a generalized log canonical center $W'$ as in the statement and let $W'=P'_1 \cap \ldots \cap P'_k$, where each $P_i'$ is a prime component of $(B')^h$ of coefficient $1$.
Further assume that the statement is true if $\dim X' - \dim W' =1$.
In particular, we have that $(X',B'+M')$ and $(P_1',B \subs P_1'. + M \subs P_1'.)$ induce the same b-divisors.
Let $P'_{1,k}$ the restriction of $P_k'$ to $P_1'$.
Up to pulling back divisors from $Z$ to guarantee $(B')^v \geq 0$, we can take a weak generalized dlt model $P_1$ of $(P_1',B \subs P_1'. + M \subs P_1'.)$.
Up to considering the Stein factorization of $P_1 \rar Z'$, $(P_1,B \subs P_1. + M \subs P_1.)$ satisfies the hypotheses of the statement.
Here $(P_1,B \subs P_1. + M \subs P_1.)$ denotes the trace of $(P_1',B \subs P_1'. + M \subs P_1'.)$ on $P_1$.
Let $P_{1,2}$ be the strict transform of $P'_{1,2}$ to $P_1$.
Now, since we are assuming the statement in case of generalized log canonical centers of codimension 1, we have that $P_1$ and $P_{1,2}$ induce the same b-divisors on the base.
On the other hand, the generalized pair structure induced on $P_{1,2}$ by $(P_1,B \subs P_1. + M \subs P_1.)$ agrees with the one induced by $(X',B'+M')$ on $P'_{1,2}$.
Therefore, $X'$ and $P'_{1,2}$ induce the same b-divisors on the base.
Repeating this argument $k-1$ times, we get the claimed reduction.

From now on, we may assume that $W'$ is a prime divisor such that $\mult \subs W'. (B')^h = 1$.
Up to pulling back some effective divisors on $Z'$ to guarantee that $(B')^v \geq 0$, we can apply Theorem \ref{generalized weak dlt model}.
Thus, we may assume that $X'$ is $\Q$-factorial and that $(X',(B')^h)$ is dlt.
Let $W' \rar Y'$ be the Stein factorization of $W' \rar Z'$.

By a generically finite base change $T \rar Z'$ factoring through $Y'$, we may assume that the following properties hold \cite[cf. proof of Theorem 1.1]{FG14}:
\begin{itemize}
\item $V'$, the normalization of the main component of $X' \times \subs Z'. T$, has a semi-stable resolution in codimension 1 \cite[see Theorem 4.3]{Amb04}.
Call the latter $V$. Let $(V',B \subs V'. + M \subs V'.)$ be the generalized sub-pair induced by $(X',B'+M')$.
We may assume that $V$ is also a semi-stable resolution of a suitable higher model of $(X,B)$, a higher model of $X'$ where $M$ descends.
Therefore, we may assume that the moduli b-divisor $\bM V'.$ descends to $V$.
We will write $M_V \coloneqq \bM V',V.$; and
\item there are a fibration $U \rar T$ and a generalized sub-pair $(U,\bB U. + \bM U.)$ induced by $W' \rar Z$ and $(W',\bB W'. + \bM W'.)$.
By construction, $U$ maps birationally onto a prime divisor $\Gamma' \subset V'$ such that $\mult \subs \Gamma'. B \subs V'. = 1$.
Notice that, by construction, the generalized sub-pair structure induced by $(V',\bB V'. + \bM V'.)$ on $\Gamma'$ agrees with $(U,\bB U. + \bM U.)$.
\end{itemize}

Now, let $B_T$ and $B_T \sups \mathrm{min}.$ the boundary divisors induced by $V' \rar T$ and $U \rar T$ respectively.
Analogously, we have moduli divisors $M_T$ and $M_T \sups \mathrm{min}.$.
By construction \cite[cf. proof of Theorem 1.1]{FG14}, we have
$$
\K T. + B_T + M_T \sim_\Q \K T. + B_T \sups \mathrm{min}. + M_T \sups \mathrm{min}..
$$
Then, to conclude, it suffices to show $B_T = B_T \sups \mathrm{min}.$.

Taking hyperplane sections, we may assume that $T$ is a curve.
Therefore, $\pi \colon  (V,B_V) \rar T$ is semi-stable.
Thus, $\Supp (B_V) \cup \pi^* Q$ is a reduced simple normal crossing divisor for every $Q \in T$.
In particular, there is a finite set $\Sigma \subset T$ such that
$$
B_T = \sum \subs P \in \Sigma. (1-b_P)P, \quad B_T \sups \mathrm{min}.= \sum \subs P \in \Sigma. (1-b_P \sups \mathrm{min}.)P
$$
and all the singular fibers of $\pi \colon V \rar T$ are mapped to $\Sigma$.

Let $\mathcal{E}=\lbrace E_i \rbrace \subs i=1.^l$ the set of prime divisors on $V$ such that $\pi (E_i) \in \Sigma$ and 
$$
\mult \subs E_i. (B_V + \sum \subs P \in \Sigma. b_P \pi^* P)\sups \geq 0 . <1.
$$
Fix a rational number $0 < \epsilon \ll 1$. Then, $(V,(B_V + \sum \subs P \in \Sigma. b_P \pi^* P)\sups \geq 0. + \epsilon \sum E_i)$ is dlt.
Notice that we have
$$
\K V. + (B_V + \sum \subs P \in \Sigma. b_P \pi^* P)\sups \geq 0. + \epsilon \sum E_i + M_V \sim \subs \Q,T. - (B_V + \sum \subs P \in \Sigma. b_P \pi^* P)\sups \leq 0. + \epsilon \sum E_i \eqqcolon E.
$$

By \cite{BZ}, we can run a $(\K V. + (B_V + \sum \subs P \in \Sigma. b_P \pi^* P)\sups \geq 0. + \epsilon \sum E_i + M_V)$-MMP with scaling of an ample divisor. Notice that every component of $E$ that dominates $T$ is exceptional over $V'$, and is not contained in the relative movable cone over $V'$ \cite[Definition 2.1]{F11b}.
So, if there are components of $E$ dominating $T$, we first run an MMP on $\K V. + (B_V + \sum \subs P \in \Sigma. b_P \pi^* P)\sups \geq 0. + \epsilon \sum E_i + M_V$ relative to $V'$ with scaling of an ample divisor $H$.

Assume by contradiction that this MMP does not terminate.
Let $\lambda \geq 0$ be the limit of the coefficients used in the scaling by $H$.
If $\lambda > 0$, then the MMP is an MMP for $\K V. + (B_V + \sum \subs P \in \Sigma. b_P \pi^* P)\sups \geq 0. + \epsilon \sum E_i + M_V + \lambda H$.
Since $( V, (B_V + \sum \subs P \in \Sigma. b_P \pi^* P)\sups \geq 0. + \epsilon \sum E_i)$ is dlt and $\lambda H + M_V$ is ample, we get a contradiction by \cite[Theorem 2.3]{F11b}.
If $\lambda = 0$, we have that $\K V_j . + (B_V + \sum \subs P \in \Sigma. b_P \pi^* P)_j \sups \geq 0. + \epsilon \sum E_i)_j + M \subs V_j. + \lambda_j H_j + G_j$ is ample, where $V_j$ is the $j$-th model in the MMP and $G_j$ is an ample divisor on $V_j$.
We can choose the divisors $G_j$ such that the sequence of strict transforms on $V$ converges to 0 in $N^1(V/V')$.
Since $\lim \lambda_j = \lambda = 0$, we get that $\K V. + (B_V + \sum \subs P \in \Sigma. b_P \pi^* P)\sups \geq 0. + \epsilon \sum E_i + M_V$ is limit of divisors movable over $V'$, and hence movable.
This provides a contradiction.
By repeatedly applying \cite[Lemma 2.9]{Lai11} at each step of the MMP, it follows that all the components of $E$ dominating $T$ are contracted.

Thus, after running an MMP over $V'$, we get to a model $V''$.
Since the MMP just run is an MMP for $\K V. + (B_V + \sum \subs P \in \Sigma. b_P \pi^* P)\sups \geq 0. + \epsilon \sum E_i + M_V + \sigma H$ for some $\sigma > 0$, we can turn $M + \sigma H$ into a boundary and conclude that $V''$ is a $\Q$-factorial klt variety and $(V'', ((B_V + \sum \subs P \in \Sigma. b_P \pi^* P)\sups \geq 0.)'' + (\epsilon \sum E_i)'')$ is dlt.
Thus, we can run an MMP for $\K V''. + ((B_V + \sum \subs P \in \Sigma. b_P \pi^* P)\sups \geq 0.)'' + (\epsilon \sum E_i)'' + M_{V''}$ relative to $T$ with scaling of an ample divisor.
Notice that $E''$, the image of $E$ on $V''$, is vertical over $T$ and $\Supp (E'')$ contains no fiber.
Therefore, $E''$ is not in the relative movable cone over $T$. By a similar argument as before, the MMP terminates, and contracts all of $E''$.
Call $\hat{V}$ the final model.
By construction, $\hat{E}=0$.
In particular, this guarantees that $(B \subs \hat{V}. + \sum \subs P \in \Sigma. b_P \hat{\pi}^* P)\sups \geq 0. = B \subs \hat{V}. + \sum \subs P \in \Sigma. b_P \hat{\pi}^* P$.
Also, by similar observations as before, we have that $(\hat{V},B \subs \hat{V}. + \sum \subs P \in \Sigma. b_P \hat{\pi}^* P)$ is dlt and $\hat{V}$ is $\Q$-factorial.
Furthermore, $(\hat{V},B \subs \hat{V}. + M \subs \hat{V}.)$ is the trace of the generalized pair $(\hat{V},\bB \hat{V}. + \bM \hat{V}.)$.
Finally, notice that just the components of $E$ are contracted in this step.
In particular, the strict transform of $\Gamma'$ is not contracted and it is normal, as $(\hat{V},B \subs \hat{V}. + \sum \subs P \in \Sigma. b_P \hat{\pi}^* P)$ is dlt.

Let $\hat{\Gamma}$ be the strict transform of $\Gamma$ on $\hat{V}$.
By construction, the generalized pair $(\hat{\Gamma},B \subs \hat{\Gamma}. + M \subs \hat{\Gamma}.)$ induced on it by $(\hat{V},B \subs \hat{V}. + M \subs \hat{V}.)$ is crepant to $(U, B_U + M_U)$.
Thus, the generalized pair $(\hat{\Gamma},\Delta \subs \hat{\Gamma}. + M \subs \hat{\Gamma}.)$ induced on it by $(\hat{V},B \subs \hat{V}. +  \sum \subs P \in \Sigma. b_P \hat{\pi}^* P + M \subs \hat{V}.)$ is crepant to $(U, B_U +  \sum \subs P \in \Sigma. b_P \pi_U^* P + M_U)$.
Here $\hat{\pi}$ and $\pi_U$ denote the morphisms to $T$ from $\hat{V}$ and $U$ respectively.

By construction, we have
$$
B_{\hat{V}} + \sum \subs P \in \Sigma. b_P \hat{\pi}^*P \geq \sum \subs P \in \Sigma. \hat{\pi}^*P.
$$
Thus, $(\hat{V},B \subs \hat{V}. +  \sum \subs P \in \Sigma. b_P \hat{\pi}^* P + M \subs \hat{V}.)$ is generalized log canonical, while the generalized pair $(\hat{V},B \subs \hat{V}. + \sum \subs P \in \Sigma. (b_P+\delta) \hat{\pi}^* P + M \subs \hat{V}.)$ is not generalized log canonical along the divisor $\sum \subs P \in \Sigma. \hat{\pi}^*P$ for any $\delta > 0$.
Since $\hat{V}$ is $\Q$-factorial and $(\hat{V},\hat{\Gamma})$ is plt, we can apply \cite[Lemma 3.2]{Bir16a}.
In particular, the generalized pair induced by $(\hat{V},B \subs \hat{V}. + \sum \subs P \in \Sigma. (b_P+\delta) \hat{\pi}^* P + M \subs \hat{V}.)$ on $\hat{\Gamma}$ is not generalized log canonical along $\sum \subs P \in \Sigma. \hat{\pi}^*P \cap \hat{\Gamma}$. Since $(\hat{\Gamma},\Delta \subs \hat{\Gamma}. + M \subs \hat{\Gamma}.)$ is crepant to $(U, B_U +  \sum \subs P \in \Sigma. b_P \pi_U^* P + M_U)$, we have $b_P = b_P \sups \mathrm{min}.$, which completes the proof.
\end{proof}

\section{Generalized adjunction}

In this section, we use the machinery developed for fiber spaces to define adjunction for higher codimensional generalized log canonical centers.
{The results of this sections hold for arbitrary generalized pairs $(X',B'+M')$ with data $X \rar X' \rar V$ and $M$, without the assumption $V= \Spec(\C)$.}

As in the classic case, given a generalized log canonical center $W' \subset X'$, the idea is to extract a generalized log canonical place $E$ dominating $W'$.
Then, one can apply generalized divisorial adjunction on $E$ and apply the generalized canonical bundle formula to the morphism $E \rar W'$.
This leads to the following definition of \emph{generalized adjunction} in arbitrary codimension.

\begin{defn} \label{def gen adj} {\em
Let $(X',B'+M')$ be a generalized pair with data $X \rar X' {\rar V}$ and $M$.
Let $W'$ be a generalized log canonical center.
Fix a corresponding generalized log canonical place $E$.
We may assume that $E$ is a smooth divisor on $X$.
Then, we have an induced morphism $E \rar W^\nu$, where $W^\nu$ denotes the normalization of $W'$.
We define b-divisors $\bB W^\nu.$ and $\bM W^\nu.$ as the boundary and moduli part of fiber space adjunction for $(E,B_E+M_E)$ over $W^\nu$.}
\end{defn}

\begin{remark}{\em
In case $W'$ is not an exceptional generalized log canonical center, one has to prove that the definition does not depend on the choice of $E$.
{Furthermore, one needs to check that the induced morphism $E \rar W^\nu$ generically has connected fibers.}
We will address this in Remark \ref{first remark about well defined} and Theorem \ref{partial well posed}, giving a positive answer.}
\end{remark}

\begin{remark} \label{first remark about well defined} {\em
In case $M'$ descends in a neighborhood of $W'$, it does not contribute to the singularities along $W'$, and generalized adjunction coincides with the usual one.
Therefore, Definition \ref{def gen adj} is well posed if $W'$ is any generalized log canonical center with $M$ relatively trivial over $W'$.}
\end{remark}

\begin{remark} {\em
In case $W'$ is a divisor, this definition coincides with the divisorial generalized adjunction introduced by Birkar and Zhang \cite[Definition 4.7]{BZ}.}
\end{remark}

\begin{remark}{\em
If $M'$, $M$ and $B'$ are $\Q$-divisors, $\bB W^\nu.$ is defined via log canonical thresholds of $\Q$-divisors.
These are rational numbers.
Then, in this case, $\bB W^\nu.$ and $\bM W^\nu.$ are automatically $\Q$-Weil b-divisors.}
\end{remark}

Now, we would like to study the properties of the b-divisors $\bB W^\nu.$ and $\bM W^\nu.$.
In particular, we would like to show that a structure of a generalized pair is induced on $W^\nu$.
Going in this direction, we are ready to prove Theorem \ref{gen adj higher codim}.

\begin{proof}[Proof of Theorem \ref{gen adj higher codim}]
As generalized adjunction is known in case $W'$ is a divisor, we may assume $\codim W' \geq 2$.
Let $E$ be the generalized log canonical place corresponding to $W'$.
Without loss of generality, we may assume that $f \colon X \rar X'$ is a log resolution, and $E$ is a divisor on $X$.
Generalized divisorial adjunction provides us with a generalized sub-pair $(E,B_E+M_E)$, where the moduli part descends to $E$.
Then, as $W'$ is an exceptional center, $(E,B_E)$ is sub-klt over the generic point of $W'$.
Furthermore, by assumption, $(X',B'+M')$ is generalized log canonical.
Therefore, we can apply \cite[Lemma 4.5]{BZ} to extract just $E$ over $\eta \subs W'.$.
Call this model $X''$, and denote the image of $E$ by $E''$.
Then, $B_{E''}$ is effective.

Now, we check that the technical conditions needed to apply the canonical bundle formula are satisfied.
By assumption, up to shrinking $X'$ to a neighborhood of $W'$, we have $B = E + \Delta$, where $\lfloor \Delta \rfloor \leq 0$.
Then, we can write $\Delta$ in a unique way as $\Delta = \Gamma - A$, where $\Gamma \geq 0$, $\lfloor \Gamma \rfloor = 0$, and $A \geq 0$ is integral and $f$-exceptional.
Consider the short exact sequence
$$
0 \rar \O X. (A-E) \rar \O X.(A) \rar \O E.(A|_E) \rar 0.
$$
Then, the corresponding long exact sequence of higher direct images provides us with
$$
\O X'. \cong f_* \O X. (A) \rar f_* \O E. (A |_E) \rar R^1 f_* \O X. (A-E) =0,
$$
where the latter element vanishes by \cite[Corollary 2.68]{KM} applied to
$$
A-E \sim_\Q \K X. + \Gamma + M - f^*(\K X'. + B' + M').
$$
This forces the chain of equalities
\begin{equation} \label{equation for effective coeffs}
(f_E)_* \O E.(A|_E) = \O W^\nu. = \O W'.,
\end{equation}
where $f_E$ denotes the restriction of $f$ to $E$, and $\O W^\nu.$, the structure sheaf of the normalization of $W'$, is seen as a sheaf of $\O W'.$-modules.
Thus, $W'$ is normal.

Now, we can apply Theorem \ref{full generality theorem firbations}, which guarantees that $(W',\bB W'. + \bM W'.)$ is a generalized sub-pair.
Furthermore, by Proposition \ref{inv adj fiber spaces}, it is generalized sub-klt.

We are left with showing that $\bB W',W'.$ is effective.
As argued in \cite[proof of Theorem 8.6.1]{K07}, this follows by equation \eqref{equation for effective coeffs} and \cite[Theorem 8.3.7]{K07}.
\end{proof}

Now, we would like to address the case when $W'$ is not an exceptional generalized log canonical center.
{To do so, we need the following result about \emph{tie breaking} for generalized pairs.
The statement is a slight generalization of \cite[Proposition 8.7.1]{K07}.
}

\begin{proposition} \label{tie breaking}
{Let $(X',B'+M')$ be a generalized klt pair with data $X \rar X' \rar V$ and $M$.
Assume that $B'$, $M'$ and $M$ are $\Q$-divisors.
Let $D'$ be an effective $\Q$-Cartier divisor on $X'$ and $N$ a $\Q$-Cartier divisor on $X$ that is nef over $V$.
Further, assume that $D'+N'$ is $\Q$-Cartier, where $N'$ denotes the pushforward of $N$ to $X'$.
Assume that the generalized pair $(X',B'+D'+M'+N')$ with data $X' \rar X \rar V$ and $M+N$ is generalized log canonical.
Let $W' \subset X'$ be a minimal generalized log canonical center of $(X',B'+D'+M'+N')$.
Then, there exist an effective $\Q$-Cartier divisor $P'$ and a rational number $0 < \epsilon \ll 1$ such that $W'$ is an exceptional generalized log canonical center of $(X',B'+P'+M'+(1-\epsilon)(D'+N'))$.}
\end{proposition}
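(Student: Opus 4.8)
The strategy is to carry out the tie-breaking argument for ordinary pairs, \cite[Proposition 8.7.1]{K07}, and to verify that its only extra ingredient in the present setting, the bookkeeping of the moduli part, is harmless: after passing to a model onto which $M$ and $N$ descend the generalized pair becomes an honest pair, and since the perturbing divisor $P'$ will be pulled back from $X'$, its contribution to generalized discrepancies is computed exactly as in the classical case.

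\textbf{Step 1: the threshold equals $1$.} Since $(X',B'+M')$ is generalized klt while $(X',B'+D'+M'+N')$ is generalized log canonical and admits the generalized log canonical center $W'$, there is a generalized log canonical place of $(X',B'+D'+M'+N')$ of generalized discrepancy exactly $-1$; as $(X',B'+M')$ itself has no such place, this forces ${\rm glct}(\K X'.+B'+M';D'+N')=1$. Consequently, for every rational number $0<\epsilon\ll 1$, the generalized pair $(X',B'+(1-\epsilon)(D'+N')+M')$ — understood with boundary part $B'+(1-\epsilon)D'$ and moduli part $M'+(1-\epsilon)N'$ — is generalized klt; fix such an $\epsilon$. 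Note that, $(X',B'+M')$ being generalized klt, no component of $B'$ has coefficient $1$, so any prime divisor over $X'$ of generalized discrepancy $-1$ in $(X',B'+D'+M'+N')$ owes that discrepancy to $D'$ or to $N$, and the factor $(1-\epsilon)$ has pushed all such divisors strictly above $-1$.

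\textbf{Step 2: a convenient model.} Replacing $X$ by a higher model, fix a log resolution $g\colon Y\rar X'$ of $(X',\Supp(B'+D'))$ onto which both $M$ and $N$ descend. By Remark \ref{remark moduli descends}, the generalized discrepancies of $(X',B'+(1-\epsilon)(D'+N')+M')$ along prime divisors of $Y$ coincide with the discrepancies of the honest pair on $Y$ determined by the descended moduli divisor $(M+(1-\epsilon)N)|_Y$; for a prime divisor $F$ on $Y$ set $a_F\coloneqq a_F(X',B'+(1-\epsilon)(D'+N')+M')>-1$. Fix a generalized log canonical place $E_0$ of $(X',B'+D'+M'+N')$ with $g(E_0)=W'$, which exists since $W'$ is a generalized log canonical center; after finitely many further blow-ups we may assume $E_0$ is a prime divisor on $Y$.

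\textbf{Step 3: the perturbation, and conclusion.} As in \cite[Proposition 8.7.1]{K07}, I would produce an effective $\Q$-Cartier divisor $P'$ on $X'$ — a suitable rational multiple of a general member of a very ample linear system vanishing along $W'$ — with the property that $\mult_F(g^*P')\le 1+a_F$ for every prime divisor $F$ on $Y$, with equality exactly for $F=E_0$. Here the minimality of $W'$ enters precisely to guarantee that bringing the generalized discrepancy of $E_0$ down to $-1$ does not simultaneously drag some other divisor — lying over $W'$, or over a subvariety strictly containing $W'$ — down to $-1$, while the genericity of $P'$ disposes of every divisor whose center is not contained in $W'$. Granting such a $P'$, the generalized pair $(X',B'+P'+M'+(1-\epsilon)(D'+N'))$ has $E_0$ as its only prime divisor of generalized discrepancy $\le -1$, and that discrepancy equals $-1$; hence $E_0$ is its unique generalized non-klt place, $W'=g(E_0)$ is its unique generalized non-klt center with $E_0$ its only generalized log canonical place, and no other generalized non-klt place occurs, so $W'$ is an exceptional generalized log canonical center of $(X',B'+P'+M'+(1-\epsilon)(D'+N'))$.

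The hard part will be Step 3: exhibiting a perturbing divisor \emph{on $X'$} whose pull-back hits the discrepancy of $E_0$ precisely while staying strictly below the discrepancy of every other divisor over $X'$. This is the substance of classical tie-breaking, and the construction of \cite[Proposition 8.7.1]{K07} applies once one checks — as in Step 2 — that the moduli part introduces no new difficulty.
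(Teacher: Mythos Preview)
Your proposal is correct and follows essentially the same approach as the paper: both reduce to \cite[Proposition 8.7.1]{K07} by passing to a log resolution onto which $M$ and $N$ descend, so that generalized discrepancies become ordinary discrepancies and the classical tie-breaking argument applies verbatim. The paper's proof simply records the redefinition $f^*(\K X'.+B'+M')=\K X.+\sum b_iE_i+M$ and $f^*(D'+N')=\sum a_iE_i+N$ and then defers to Koll\'ar, whereas you spell out the steps of that argument in more detail.
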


\begin{proof}
{The proof of \cite[Proposition 8.7.1]{K07} goes through almost verbatim.
Therefore, we just point the relevant changes in the proof.
Without loss of generality, we may assume that $f \colon X \rar X'$ is a log resolution of $(X',B'+D')$.
Then, we may write
\begin{equation} \label{eq tie break}
f^*(\K X'. + B' + M' = \K X. + \sum b_i E_i + M, \quad \text{and} \quad f^*(D'+N')= \sum a_i E_i + N.
\end{equation}
After redefining $a_i$ and $b_i$ in the proof of \cite[Proposition 8.7.1]{K07} with the ones in equation \eqref{eq tie break}, the proof of \cite[Proposition 8.7.1]{K07} goes through.
Notice that the roles of $X$ and $X'$ are flipped in the notation of \cite[Proposition 8.7.1]{K07}.}
\end{proof}

\begin{theorem} \label{partial well posed}
Let $(X',B'+M')$ be a {projective} generalized pair with data $X \rar X' {\rar V}$ and $M$.
{Assume that $X'$ is a $\Q$-factorial klt variety.}
Let $B'$, $M'$ and $M$ be $\Q$-divisors.
Let $W'$ be a generalized log canonical center, and $W^\nu$ its normalization.
{Assume that $W'$ is projective.}
Then, Definition \ref{def gen adj} is well posed, and the induced moduli b-divisor $\bM W^\nu.$ is b-Cartier and b-nef.
Furthermore, $\bB W^\nu,W^\nu.$ is effective.
\end{theorem}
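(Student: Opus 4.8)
The plan is to fix the chosen generalized log canonical place $E$, install divisorial adjunction on a good model, and reduce the general case to the exceptional one of Theorem \ref{gen adj higher codim}. First I would apply Proposition \ref{weak generalized dlt model with extraction} to produce a $\Q$-factorial weak generalized dlt model $\mu \colon (X^m, B^m + M^m) \rar X'$ on which $E$ is a prime divisor; since $(X',B'+M')$ is generalized log canonical, $\mu$ is crepant, $(X^m,B^m+M^m)$ is a generalized dlt model, so $E$ is a normal component of $\lfloor B^m \rfloor$, and generalized divisorial adjunction yields a generalized pair $(E, B_E + M_E)$ with $B_E \geq 0$ and $M_E$ descending to $E$. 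As $E$ is normal, the morphism $E \rar W'$ factors through the normalization $W^\nu$, and since $\K E. + B_E + M_E = (\mu|_E)^*(\K X'. + B' + M')$ is pulled back from $W^\nu$, one has $\K E. + B_E + M_E \sim_{\Q, g} 0$ for the induced morphism $g \colon E \rar W^\nu$. If $W'$ is an exceptional generalized log canonical center there is nothing to choose and Theorem \ref{gen adj higher codim} already yields every assertion, so the content is the general case.

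For an arbitrary center, Proposition \ref{tie breaking} reduces matters to the exceptional case. Since the b-divisors on $W^\nu$ are detected over the generic point of $W^\nu$ and over the generic points of its prime divisors, we may assume $W'$ is a \emph{minimal} generalized log canonical center; then, applying Proposition \ref{tie breaking} to the generalized klt pair $(X',0)$ (which is generalized klt because $X'$ is $\Q$-factorial klt) with $D'+N' \coloneqq B' + M'$, we obtain an effective $\Q$-Cartier divisor $P'$ and a rational $0 < \epsilon \ll 1$ such that $W'$ is an exceptional generalized log canonical center of a perturbed generalized pair $(X', B'_\epsilon + M'_\epsilon)$. For the latter, Theorem \ref{gen adj higher codim} shows that $W'$ is normal, that its unique generalized log canonical place $E_\epsilon$ over $W'$ induces a contraction $E_\epsilon \rar W^\nu$ (via the Kawamata--Viehweg vanishing step in that proof), and that $W^\nu$ acquires a generalized klt structure with $\Q$-Cartier, b-nef moduli part. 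This settles the normality of $W'$ and produces one place $E_\epsilon$ over $W'$ for which the construction behaves as wanted.

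The remaining point, and the one I expect to be hardest, is well-posedness: that \emph{every} generalized log canonical place over $W'$ induces a contraction onto $W^\nu$, and that the resulting boundary and moduli b-divisors are independent of the place and of the model. For this I would use the connectedness of the strata of a common generalized dlt model over the generic point of $W'$ --- available because the crepant pullback $\K {X^m}. + B^m + M^m$ is $\mu$-trivial and $(X',B'+M')$ is generalized log canonical --- to link any two generalized log canonical places over $W'$ by a chain of such places whose consecutive common strata dominate $W'$. For adjacent places $F$ and $F'$, the intersection $F \cap F'$ is a generalized log canonical center of $(F, B_F + M_F)$ dominating $W^\nu$, and, since the model is generalized dlt over the generic point of $W^\nu$, Theorem \ref{FG comparison} applied to the fibration $F \rar W^\nu$ identifies the b-divisors induced by $(F, B_F+M_F)$ with those induced by $(F \cap F', \, \cdot \,)$; together with the transitivity of generalized divisorial adjunction --- identifying adjunction for $(X',B'+M')$ through $F'$ with adjunction for $(F, B_F+M_F)$ through $F'$ --- this propagates both the contraction property and the equality of the induced b-divisors along the chain, anchored at $E_\epsilon$. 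With well-posedness in hand, Theorem \ref{full generality theorem firbations} applied to $g \colon E \rar W^\nu$ gives that $\bM W^\nu.$ is $\Q$-Cartier and b-nef, while the effectivity of $\bB W^\nu, W^\nu.$ follows exactly as in Theorem \ref{gen adj higher codim}, from \cite[Theorem 8.3.7]{K07} and the surjectivity provided by Kawamata--Viehweg vanishing.
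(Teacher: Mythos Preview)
Your overall architecture matches the paper's: tie-breaking (Proposition~\ref{tie breaking}) to anchor at an exceptional center, then connectedness of the glc places over $\eta_{W'}$ combined with Theorem~\ref{FG comparison} to propagate equality of the induced b-divisors, and Theorem~\ref{full generality theorem firbations} for the $\Q$-Cartier and b-nef claims. Two genuine gaps, however, separate your write-up from a correct proof.

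First, you assume $(X',B'+M')$ is globally generalized log canonical, and deduce that the weak generalized dlt model $\mu$ is crepant. Neither is in the hypotheses: $W'$ being a generalized log canonical center only forces glc-ness near $\eta_{W'}$. The paper handles this by shrinking to a neighborhood $U'$ of $\eta_{W'}$ before applying tie-breaking, and by writing the trace of $(X',B'+M')$ on $X^m$ as $(X^m,B^m+\Delta^m+M^m)$ with $\Delta^m\geq 0$ the non-glc excess; effectivity of $B_{E^m}$ then comes from \cite[Remark~4.8]{BZ} applied to the dlt pair $(X^m,B^m)$, together with $\Delta^m\geq 0$. Your appeals to Theorem~\ref{gen adj higher codim} and Proposition~\ref{tie breaking} likewise need this localization, since both have a glc hypothesis.

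Second, and more substantively, connectedness does \emph{not} propagate the contraction property along a chain of adjacent places. If $F,F'$ are adjacent places over $W'$ and $F\to W^\nu$ has connected fibers, there is no reason the Stein factorization of $F'\to W'$ should equal $W^\nu$; it may be a nontrivial finite cover. The paper acknowledges this explicitly: only the anchor place $E^m_1$ (coming from tie-breaking) is shown to give a contraction to $W^\nu$, while for the remaining $E^m_i$ one accepts that $W^m_i\to W^\nu$ can be generically finite, and compares the induced generalized pair structures after a generically finite base change in the sense of Remark~\ref{remark compare}. Your claim that the contraction property transfers along the chain is false as stated; the correct move is to drop that claim and compare b-divisors up to base change.

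A minor point: your justification of $\bB W^\nu,W^\nu.\geq 0$ via \cite[Theorem~8.3.7]{K07} and vanishing is misplaced in the non-exceptional case. Once you know $B_E\geq 0$ on your chosen place (which you correctly derive from divisorial adjunction on the dlt model), effectivity on $W^\nu$ is immediate from the definition of the boundary part via generalized log canonical thresholds; the paper argues exactly this way.
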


\begin{proof}
By assumption, $(X',B'+M')$ is generalized log canonical in a neighborhood $U'$ of $\eta \subs W'.$.
{Furthermore, for every $0 < \epsilon \ll 1$, the generalized pair $(X',(1-\epsilon)(B'+M'))$ with data $X \rar X' \rar V$ and $(1 - \epsilon )M$ is generalized klt in an open set containing the generic point of $W'$.
Up to shrinking $U'$, we may assume that $W' \cap U'$ is a minimal generalized log canonical center, and that the restriction of $(X',B'+M')$ to $U'$ is generalized log canonical.
Then, on the open set $U'$, we can apply Proposition \ref{tie breaking}.
In particular, there exist $0 < \delta \ll 1$ and an effective divisor $P'$ such that $W' \cap U'$ is an exceptional generalized log canonical center for the restriction of $(X',(1-\delta)(B'+M')+P')$ to $U'$.
Let $E$ be the corresponding generalized log canonical place.
Notice that $E$ is also a generalized log canonical place for $(X',B'+M')$.}

{Let $X'' \rar X'$ be a weak generalized dlt of $(X',(1-\delta)(B'+M')+P')$, and let $E''$ denote the trace of $E$ on $X''$.
Notice that, by construction, $E''$ is normal.
Since $W'$ is exceptional for $(X',(1-\delta)(B'+M')+P')$ along $U'$, we can apply the proof of Theorem \ref{gen adj higher codim}.
In particular, it follows that $W' \cap U'$ is normal and that the induced morphism $E'' \rar W'$ is a contraction over $W' \cap U'$.
Let $W''$ denote the Stein factorization of $E'' \rar W'$.
Then, we have $W'' = W^\nu$.}

{Now, let $X^m \rar X$ be a weak generalized dlt model of $(X',B'+M')$.
By Proposition \ref{weak generalized dlt model with extraction}, we may assume that $E$ appears as a divisor on $X^m$.
Let $E^m$ denote the trace of $E$ on $X^m$, and let $(E^m, B \subs E^m. + M \subs M^m.)$ denote the generalized pair induced by generalized adjunction.
Notice that $E^m$ and $E''$ are normal and birational to each other.
Furthermore, they both admit a morphism to $W^\nu$.
Since $E'' \rar W^\nu$ is a contraction, by taking a common resolution of $E^m$ and $E''$, we obtain that $E^m \rar W^\nu$ is a contraction.
}

{Let $(X^m,B^m+M^m)$ denote the pair obtained by taking the weak generalized dlt model, and let $(X^m,B^m+\Delta^m + M^m)$ denote the trace of $(X',B'+M')$ on $X^m$.
Notice that $\Delta^ \geq 0$.
Let $(E^m,B \subs E^m. + M \subs E^m.$ denote the generalized pair induced by $(X^m,B^m+\Delta^m+M^m)$ by generalized adjunction.
Since $E^m$ appears with coefficient 1 in $B^m$, $(X^m,B^m)$ is dlt, by \cite[Remark 4.8]{BZ} it follows that $B \subs E^m. \geq 0$.
Then, we can apply Theorem \ref{full generality theorem firbations} and define a structure $(W^\nu, \bB W^\nu. + \bM W^\nu.)$ of generalized sub-pair on $W^\nu$.
Since $B \subs E^m. \geq 0$, it follows that $B \subs W^\nu, W^\nu. \geq 0$.}

{Now, we are left with showing that the generalized pair structure $(W^\nu,\bB W^\nu. + \bM W^nu.)$ is intrinsic.
Let $E^m_1, \ldots , E^m_l$, where $l \geq 2$, be the distinct generalized log canonical places with center $W'$ appearing in $X^m$, with $E^m=E^m_1$.
By the choice of $X^m$, $(X^m,B^m)$ is dlt over $U'$.
By the connectedness principle \cite[Corollary 5.49]{KM}, the locus $E^m_1 \cup \ldots \cup E^m_l$ is connected over the generic point of $W'$.
Let $W^m_i$ be the Stein factorization of $E^m_i \rar W'$.
While $W^m_1=W^\nu$, it may be that $W^m_i \rar W^\nu$ is a finite morphism for $i \geq 2$.
Thus, each $E^m_i$ induces a generalized pair structure on a finite cover of $W^\nu$.
In the spirit of Remark \ref{remark compare}, we can compare these generalized pair structures after a generically finite base change.
By abuse of language, we omit the base change and talk about generalized pair structures induced on $W^\nu$.
}

{Fix $E^m_k$ for some $1 \leq k \leq l$, and denote by $(E^m_k,B \subs E^m_k. + M \subs E^m_k.)$ the generalized pair induced by divisorial adjunction.
Then, $(E_k,B \subs E_k. + M \subs E_k.)$ is generalized log canonical and generalized dlt over the generic point of $W'$.
Now, by Theorem \ref{FG comparison}, the generalized pair induced by $(E^m_k,B \subs E^m_k. + M \subs E^m_k.)$ on $W^\nu$ is the same as the one induced by a generalized log canonical center $F$ of $(E_k,B \subs E_k. + M \subs E_k.)$ that dominates $W^\nu$.
By the construction of $X^\nu$, every such $F$ arises as the intersection of some of the $E^m_i$'s.
}

Thus, if $E^m_k \cap E^m_j \neq \emptyset$, $(E^m_k,B \subs E^m_k. + M \subs E^m_k.)$ and $(E^m_j,B \subs E^m_j. + M \subs E^m_j.)$ induce the same b-divisors on $W^\nu$.
As $E^m_1 \cup \ldots \cup E^m_l$ is connected over the generic point of $W'$, by transitivity, we conclude that all the $E^m_i$'s induce the same b-divisors.
\end{proof}

We conclude discussing inversion of adjunction in the setup of generalized pairs. The strategy follows the lines of \cite{Hac14}.

\begin{proof}[Proof of Theorem \ref{prop inv of adj}]
The ``only if'' direction follows immediately by divisorial adjunction applied to a generalized log canonical place, and Proposition \ref{inv adj fiber spaces}.
Thus, we are left with proving the ``if'' part.
Proceeding by contradiction, henceforth we will assume that $(W^\nu,\bB W^\nu. + \bM W^\nu.)$ is generalized log canonical, while $(X',B'+M')$ is not generalized log canonical near $W'$.

Let $(X^m,B^m + M^m)$ be a $\Q$-factorial weak generalized dlt model for $(X',B'+M')$.
By Proposition \ref{weak generalized dlt model with extraction}, we may assume that there is a generalized log canonical place $S^m$ corresponding to $W'$.
Set $\Sigma^m \coloneqq (E^+ - E)^m$, and $\Gamma^m \coloneqq B^m - S^m$.
As in the proof of Theorem \ref{generalized weak dlt model}, $E^+$ denotes the divisors on $X$ of generalized discrepancy at most $-1$, and $E \coloneqq \red E^+$.

By the proofs of Theorem \ref{generalized dlt model} and Proposition \ref{weak generalized dlt model with extraction}, there is a big divisor $L^m$ with the following property: for any $t > 0$ we can find a divisor $\Theta^m_t \sim_\R \Gamma^m + tL^m + M^m$ such that $(X^m, S^m + \Theta_t^m)$ is plt.
Up to twisting $L^m$ by an ample divisor, we may assume that we can run the relative $(\K X^m. + B^m + M^m)$-MMP over $X'$ with scaling of $L^m$.

Let $\phi_i  \colon  X^m_i \drar X^m_{i+1}$ be the sequence of flips and divisorial contractions, and let $\mu_i  \colon  X^m_i \rar X'$ and $\bar{\mu}_i  \colon  S^m_i \rar S'$ be the induced morphisms.
Then, there is a sequence of non-negative rational numbers $\lbrace s_i \rbrace_{i \geq 0}$ such that $s_i \geq s_{i+1}$, and either $s_{N+1}=0$ for some $N \in \N$, or $\lim _{i \to + \infty}s_i=0$.
Furthermore, the divisor $\K X_i^m. + S^m_i + \Gamma^m_i + M^m_i + s L^m_i$ is nef over $X'$ for all $s_i \geq s \geq s_{i+1}$.
As the plt property is preserved by steps of the MMP \cite[Corollary 3.44]{KM}, the pair $(X^m_i,S^m_i + \Theta^m_{t,i})$ is plt if $t < s_i$.

By standard arguments, we may assume that $\phi_i$ is a flip for $i \geq i_0$, for some $i_0 \in \N$.
In addition, by the arguments in the proofs of Step 1 and Step 2 of \cite[\nopp 4.2.1]{Fuj07}, we may assume that $S^m_i \drar S^m_{i+1}$ is an isomorphism in codimension 1 for all $i \geq i_0$.

Now, assume that for some $i \geq 0$ we have $S^m_i \cap \Sigma^m_i \neq \emptyset$. Then, we can write
$$
\mu_i^*(\K X'. + B' +M')|_{S^m_i} = \K S_i^m. + B_{S^m_i} + M_{S^m_i},
$$
and $(B_{S^m_i})^{>1} \neq 0$.
In particular, $(S_i^m,B_{S^m_i} + M_{S^m_i})$ is not generalized log canonical. Consider the induced fibration $S^m_i \rar W^\nu$.
Then, by Proposition \ref{inv adj fiber spaces} and the construction of generalized adjunction, $(W^\nu,\bB W^\nu. + \bM W^\nu.)$ is not generalized log canonical.
This is a contradiction.
{Notice that, as in the proof of Theorem \ref{partial well posed}, we are abusing notation, as $S^m_i \rar W^\nu$ may not have connected fibers.
On the other hand, this is not a problem, since a generalized sub-pair is generalized sub-log canonical if and only if the generalized sub-pair induced by a generically finite base change is sub-log canonical \cite[Proposition 5.20]{KM}.
}

Thus, we may assume that $S_i^m \cap \Sigma_i^m = \emptyset$ for all $i \geq 0$.
For any integer $k \gg 0$ such that $k \Sigma^m$ is an integral divisor, pick $i \geq i_0$ such that $s_i > \frac{1}{k} \geq s_{i+1}$.
Then,
$$
L^m_i - k \Sigma^m_i -S^m_i \sim_{\Q,\mu_i} \K X_i^m. + \Theta_{\frac{1}{k},i} + (k-1)\left(\K X^m_i. + S^m_i + \Gamma_i^m + M^m_i + \frac{1}{k}L^m_i\right).
$$
The pair $(X_i^m,\Theta_{\frac{1}{k},i})$ is klt, while $\K X^m_i. + S^m_i + \Gamma_i^m + M^m_i + \frac{1}{k}L^m_i$ is $\mu_i$-nef.
Hence, by the relative version of Kawamata--Viehweg vanishing \cite[cf. proof of Corollary 2.68 and Theorem 2.70]{KM}, we have $R^1\mu_{i,*}\O X_i^m. (L^m_i - k \Sigma^m_i -S^m_i)=0$.
This implies that there is a surjection
\begin{equation} \label{eq surjection}
\mu_{i,*}\O X_i^m. (L^m_i - k \Sigma^m_i) \rar \bar{\mu}_{i,*}\O S_i^m. (L^m_i - k \Sigma^m_i) = \bar{\mu}_{i,*}\O S_i^m. (L^m_i) \rar 0,
\end{equation}
where the equality $\bar{\mu}_{i,*}\O S_i^m. (L^m_i - k \Sigma^m_i) = \bar{\mu}_{i,*}\O S_i^m. (L^m_i)$ follows from the fact that $S_i^m \cap \Sigma_i^m = \emptyset$.
On the other hand, for $k \gg 0$ the subsheaves
$$
\mu_{i_0,*}\O X_{i_0}^m. (L^m_{i_0} - k \Sigma^m_{i_0}) \subset \mu_{i_0,*}\O X_{i_0}^m. (L^m_{i_0})
$$
are contained in $\mathcal{I}_{\mu_{i_0}(\Sigma^m_{i_0})} \cdot \mu_{i_0,*}\O X_{i_0}^m. (L^m_{i_0})$.
Since we are assuming that $(X',B'+M')$ is not generalized log canonical around $W'$, we have $W' \cap \mu_{i_0}(\Sigma^m_{i_0}) \neq \emptyset$.
Thus, the restrictions to $W'$ of the local sections of $\mu_{i_0,*}\O X_{i_0}^m. (L^m_{i_0} - k \Sigma^m_{i_0})$ vanish along the intersection $W' \cap \mu_{i_0}(\Sigma^m_{i_0})$.
Consequently, the morphism
\begin{equation} \label{eq not surjection}
\mu_{i_0,*}\O X_{i_0}^m. (L^m_{i_0} - k \Sigma^m_{i_0}) \rar \bar{\mu}_{i_0,*}\O S_{i_0}^m. (L^m_{i_0})
\end{equation}
is not surjective.

Now, since for $i \geq i_0$ all the maps $X_{i_0}^m \drar X \subs i.^m$ and $S_{i_0}^m \drar S \subs i.^m$ are isomorphisms in codimension $1$, we have equalities $\mu_{i,*}\O X_i^m. (L^m_i - k \Sigma^m_i) = \mu_{i_0,*}\O X_{i_0}^m. (L^m_{i_0} - k \Sigma^m_{i_0})$, and $\bar{\mu}_{i,*}\O S_i^m. (L^m_i) = \bar{\mu}_{i_0,*}\O S_{i_0}^m. (L^m_{i_0})$.
Thus, equation \eqref{eq not surjection} is equivalent to saying that
$$
\mu_{i,*}\O X_i^m. (L^m_i - k \Sigma^m_i) \rar  \bar{\mu}_{i,*}\O S_i^m. (L^m_i)
$$
is not surjective.
This contradicts equation \eqref{eq surjection}, and concludes the proof.
\end{proof}

\section{Applications to a conjecture of Prokhorov and Shokurov}

Now, we would like to discuss a possible application of the canonical bundle formula for generalized pairs.
In particular, we are interested in the connections with a conjecture by Prokhorov and Shokurov \cite[Conjecture 7.13]{PS}.
We start by recalling its statement.

\begin{conjecture}[{\cite[Conjecture 7.13]{PS}}] \label{Prokhorov-Shokurov conjecture}
Let $(X,B)$ be a sub-pair, and assume that $B$ is a $\Q$-divisor.
Let $f \colon X \rar Z$ be a contraction such that $\K X. + B \sim_{\Q,f} 0$.
Also, let $(X,B)$ be klt over the generic point of $Z$.
Then, we have:
\begin{itemize}
\item[(i)] $\bM Z.$ is b-semi-ample;
\item[(ii)] let $X \subs \eta.$ be the generic fiber of $f$.
Then $I_0(\K X_\eta. + B_\eta) \sim 0$, where $I_0$ depends only on $\dim X_\eta$ and the multiplicities of $B^h$; and
\item[(iii)] $\bM Z.$ is effectively b-semi-ample.
There exists a positive integer $I_1$ depending only on the dimension of $X$ and the horizontal multiplicities of $B$ (a finite set of rational numbers) such that $I_1 \bM Z.$ is very b-semi-ample; that is, $I_1 \bM Z.= \overline{L}$, where $L$ is a basepoint-free divisor on some birational model of $Z$.
\end{itemize}
\end{conjecture}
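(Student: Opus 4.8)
The plan is to establish the three parts by a single induction on the relative dimension $d = \dim X - \dim Z$, with the generalized canonical bundle formula of Theorems \ref{lemma klt fibrations} and \ref{full generality theorem firbations} and Lemma \ref{lemma diagram of fibrations} as the engine. For part (i), the base case $d=1$ is \cite[Theorem 8.1]{PS}. For $d\geq 2$ I would, after a generically finite base change (allowed by Remark \ref{base change property}) and weak semistable reduction \cite{Kar99,AK}, reduce to the case in which the geometric generic fiber $X_{\overline{\eta}}$, or a terminalization of $(X_\eta,B_\eta)$, admits a nontrivial fibration onto a lower-dimensional base --- equivalently, $f$ factors as $X\rar Y\rar Z$ through a contraction $Y$ with $\dim Y<\dim X$. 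In that situation Theorem \ref{full generality theorem firbations} endows $Y$ with a generalized pair $(Y,\bB Y.+\bM Y.)$, Proposition \ref{inv adj fiber spaces} transfers the klt hypothesis to the generic fiber of $Y\rar Z$, and Lemma \ref{lemma diagram of fibrations} shows that $(X,B)$ and $(Y,\bB Y.+\bM Y.)$ induce the same boundary b-divisor on $Z$; since the total class $\K X.+B$ is the pullback of $\K Z.+\bB Z,Z.+\bM Z,Z.$ via either route, the moduli b-divisors agree as well, and the problem acquires strictly smaller relative dimension, so one concludes by the inductive hypothesis --- which forces the induction to be run over generalized pairs, not just pairs. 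The residual, non-factoring cases are: $X_{\overline{\eta}}$ of Kodaira dimension $0$, handled for $d=2$ by Fujino \cite{Fuj03}, and $X_{\overline{\eta}}\cong\pr 2.$ with $(X_\eta,B_\eta)$ terminal --- handled, in the non-terminal case, by passing to a terminalization as in Theorem \ref{prokhorov shokurov rel dim 2 revised}. This is exactly the content of Theorems \ref{prokhorov shokurov rel dim 2} and \ref{prokhorov shokurov rel dim 2 revised}; for $d\geq 3$ the residual cases are open.

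For part (ii) I would argue that the statement is intrinsic to the generic fiber: after base change to the geometric generic point it reduces to showing that a klt log Calabi--Yau pair $(F,\Delta)$ of dimension $d$, whose coefficients lie in the fixed finite set of horizontal multiplicities of $B$, has Cartier index of $\K F.+\Delta$ bounded by a constant $I_0$ depending only on $d$ and that finite set, so that $I_0(\K F.+\Delta)\sim 0$. For $d=1$ this is the classical dichotomy $I_0\in\{1,2,3,4,6\}$; for $d=2$ it follows from the classification of log Calabi--Yau surface pairs (the same circle of ideas underlying \cite{Fuj03}). In general this is the index conjecture for log Calabi--Yau pairs, known to follow from a log form of the BAB conjecture \cite{Bir16a,Bir16b} together with effective non-vanishing; thus (ii) holds in those dimensions where these inputs are available, and in particular in the range covered by part (i).

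For part (iii) I would combine (i) and (ii) with an effective basepoint-free statement for the moduli part. On a sufficiently high smooth model $\tilde Z$ on which $\bM Z.$ descends to a semi-ample $\Q$-Cartier divisor, part (ii) bounds the denominators occurring in the moduli part, so that $I_1\bM Z,\tilde Z.$ is Cartier for some $I_1$ depending only on $\dim X$ and the horizontal multiplicities; an effective version of the freeness of the moduli part --- in the spirit of the Hodge-theoretic analysis of Fujino--Mori \cite{FM00} and Ambro \cite{Amb04}, using the positivity and boundedness of the relevant Hodge bundles --- then yields a basepoint-free $L$ with $I_1\bM Z.=\overline L$, the uniformity of $I_1$ being precisely the assertion that the period contribution to $\bM Z.$ is controlled by $\dim X$ and the horizontal multiplicities alone. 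The main obstacle is that each of the three parts, in relative dimension $\geq 3$, rests on currently unavailable inputs: a semi-ampleness theorem for the moduli part of families of higher-dimensional K-trivial or Fano-type varieties for (i), and boundedness together with index bounds for log Calabi--Yau pairs for (ii) and (iii). Consequently this strategy delivers the conjecture only in the low relative-dimensional cases recorded in Theorems \ref{prokhorov shokurov rel dim 2} and \ref{prokhorov shokurov rel dim 2 revised}, not in full.
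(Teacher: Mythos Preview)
The statement you are asked to prove is a \emph{conjecture}, and the paper does not prove it. There is therefore no proof in the paper to compare against. What the paper does is introduce a generalized version (Conjecture~\ref{generalized conjecture}), show that the two conjectures are equivalent in each fixed relative dimension (Theorem~\ref{PS conjecture implies generalized conj}), and establish part~(i) only in relative dimension $2$ excluding the case of a terminal $\pr 2.$-fibration (Theorems~\ref{prokhorov shokurov rel dim 2} and \ref{prokhorov shokurov rel dim 2 revised}). Parts~(ii) and~(iii), and part~(i) for $d \geq 3$, are not proved.

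Your proposal is not a proof but an honest sketch of a strategy, and you correctly acknowledge at the end that it delivers only the partial results the paper actually contains. The inductive architecture you describe for part~(i) --- factor through an intermediate base, transfer the structure via Theorem~\ref{full generality theorem firbations} and Lemma~\ref{lemma diagram of fibrations}, and recognize that the induction must run over generalized pairs --- is exactly the mechanism behind Theorem~\ref{PS conjecture implies generalized conj} and the final theorem of the paper. Your identification of the residual cases (K-trivial fibers and terminal $\pr 2.$-fibrations) is also correct. For parts~(ii) and~(iii) you appeal to inputs (the index conjecture for log Calabi--Yau pairs, effective freeness of Hodge bundles) that are themselves open in the required generality; the paper does not attempt these, and your discussion of them goes beyond what the paper claims. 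In short: your outline is sound and aligned with the paper's approach, but you should present it as a strategy toward a conjecture with known partial results, not as a proof.
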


In view of the recent developments, we propose the following generalization of Conjecture \ref{Prokhorov-Shokurov conjecture}.

\begin{conjecture} \label{generalized conjecture}
Let $(X',B'+M')$ be a generalized sub-pair with data $X \rar X'$ and $M$.
Assume that $B'$, $M'$ and $M$ are $\Q$-divisors.
Let $f \colon X' \rar Z'$ be a contraction such that $\K X'. + B' + M' \sim_{\Q,f} 0$.
Assume that $M$ is semi-ample, and let $c$ be the minimum positive integer such that $|cM|$ is basepoint-free.
Also, let $(X',B'+M')$ be generalized klt over the generic point of $Z'$.
Then, we have:
\begin{itemize}
\item[(i)] $\bM Z'.$ is b-semi-ample;
\item[(ii)] let $X' \subs \eta.$ be the generic fiber of $f$.
Then $I_0(\K X'_\eta. + B'_\eta + M'_\eta) \sim 0$, where $I_0$ depends only on $\dim X'_\eta$, the multiplicities of $B^h$ and $c$, and we are free to replace the representative of $M'$ in its $\Q$-linear equivalence class; and
\item[(iii)] $\bM Z.$ is effectively b-semi-ample.
There exists a positive integer $I_1$ depending only on the dimension of $X'$, the horizontal multiplicities of $B$ and $c$ (a finite set of rational numbers) such that $I_1 \bM Z'.$ is very b-semi-ample; that is, $I_1 \bM Z'.= \overline{L}$, where $L$ is a basepoint-free divisor on some birational model of $Z'$.
\end{itemize}
\end{conjecture}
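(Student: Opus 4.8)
The plan is to prove part (i) first and then to extract parts (ii) and (iii) by keeping careful track of effectivity. By Theorem~\ref{full generality theorem firbations} we already know that $\bM Z'.$ is a $\Q$-Cartier, b-nef b-divisor, so the entire content of (i) is to upgrade b-nefness to b-semi-ampleness; I expect essentially all of the difficulty to sit here, and in full generality this step should require (a relative, b-divisorial form of) the classical Conjecture~\ref{Prokhorov-Shokurov conjecture}. Throughout, the technical rank-one hypothesis occurring in the results we invoke is automatic, since $(X',B'+M')$ is generalized klt over the generic point of $Z'$.

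\textbf{First reduction.} Since $M$ is semi-ample and $|cM|$ is basepoint-free, fix a model on which $M$ descends. By Remark~\ref{remark min max}, for a fixed smooth model $Z$ of $Z'$ there are finitely many general members $A_1,\dots,A_l\in\frac1c|cM|$ with $\bM Z',Z.=\max_{1\le i\le l}\bM Z',Z.\sups A_i.$, and each $(X,B+A_i)$ is a usual sub-pair, klt over the generic point of $Z'$, with $\K X.+B+A_i\sim_\Q f^*L_{Z'}$. Granting Conjecture~\ref{Prokhorov-Shokurov conjecture}(i) for the fibrations $(X,B+A_i)\rar Z'$, each $\bM Z'.\sups A_i.$ is b-semi-ample; after passing to a sufficiently high $Z$ we may assume that these, and $\bM Z'.$ itself, all descend to $Z$ and that each $\bM Z',Z.\sups A_i.$ is semi-ample. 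Thus $\bM Z',Z.$ is the pointwise maximum of finitely many semi-ample divisors, and by Remark~\ref{remark rel dim 1 M almost nef and good} it is the supremum of b-nef, b-good b-divisors. What remains is to show that $\bM Z'.$ is itself b-good, after which abundance-type results (nef and abundant implies semi-ample) would yield b-semi-ampleness.

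\textbf{The inductive scheme.} To reduce the classical input to known cases I would run an induction on $\dim X'-\dim Z'$, mirroring the proof of Theorem~\ref{prokhorov shokurov rel dim 2}. The base case is relative dimension one: replacing $M$ by a general $A\in\frac1c|cM|$ reduces it to the classical relative-dimension-one theorem \cite[Theorem 8.1]{PS} (see also \cite{FG14}). For the inductive step, run an MMP as in the proof of Theorem~\ref{full generality theorem firbations} to factor $X'\drar X''\rar Y\rar Z'$ with $X''\rar Y$ a Mori fiber space; apply the inductive hypothesis to $X''\rar Y$ to obtain that $\bM Y.$ is b-semi-ample, pass to a model of $Y$ on which it is b-free so that the induced nef part on $Y$ is semi-ample, apply the inductive hypothesis to $Y\rar Z'$, and then use Lemma~\ref{lemma diagram of fibrations} to identify the resulting b-divisor with $\bM Z'.$. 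The index $c$ propagates through this recursion and controls the freeness index at each stage, which is exactly what (ii) and (iii) quantify: (ii) should follow from an effective form of the base case together with boundedness of indices (see \cite{Bir16b}), and (iii) from effective base-point-freeness applied on $Z'$ together with boundedness of the relevant invariants of the generic fiber, all bounded in terms of $\dim X'_\eta$, the horizontal multiplicities of $B$, and $c$.

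\textbf{Main obstacle.} Two genuine difficulties remain. First, passing from b-semi-ampleness of the slices $\bM Z'.\sups A_i.$ to b-semi-ampleness of $\bM Z'.$: the maximum of semi-ample divisors need not be semi-ample, so this really asks for an abundance (or b-goodness) statement for $\bM Z'.$, whose b-nefness is all we currently have, and I expect it to be of the same order of difficulty as a b-divisorial non-vanishing theorem. Second, in the inductive scheme the problematic case is a geometric generic fiber that is a Fano variety of Picard number one admitting no intermediate fibration---for instance $\mathbb{P}^{n}$---where the Hodge-theoretic input currently yields only b-nefness; this is precisely the obstruction already visible for $\mathbb{P}^{2}$ in Theorems~\ref{prokhorov shokurov rel dim 2} and~\ref{prokhorov shokurov rel dim 2 revised}, and overcoming it seems to require ideas beyond the methods of this paper.
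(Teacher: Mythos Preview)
The statement you are asked to prove is a \emph{conjecture}; the paper does not claim an unconditional proof. What the paper does establish is Theorem~\ref{PS conjecture implies generalized conj}: each part of Conjecture~\ref{Prokhorov-Shokurov conjecture} in relative dimension $n$ implies the corresponding part of Conjecture~\ref{generalized conjecture}. Your proposal is best read as an attempt at that implication, together with a separate (and, as you yourself note, incomplete) inductive scheme aimed at the unconditional statement.

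On the implication for part~(i), your ``main obstacle'' is illusory, and this is where your argument diverges from the paper. You phrase the problem as showing that a pointwise maximum of finitely many semi-ample divisors is semi-ample, observe that this is false in general, and then look for an abundance or b-goodness input to rescue it. The paper avoids this entirely. Working on a model $Z$ where $\bM Z'.$ descends, one resolves $|\bM Z',Z.|_\Q=|L|_\Q+F$ and argues by contradiction that $F=0$. For a fixed prime $P\subset\Supp F$, Remark~\ref{remark min max} gives a single $A\sim_\Q M$ with $\mult_P\bM Z',Z.=\mult_P\bM Z',Z.^A$, so that $\bM Z',Z.=\bM Z',Z.^A+E$ with $E\ge 0$ and $P\not\subset\Supp E$. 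Since $|\bM Z',Z.^A|_\Q$ is basepoint-free by the classical conjecture, $|\bM Z',Z.|_\Q\supset|\bM Z',Z.^A|_\Q+E$ is free at the generic point of $P$, contradicting $P\subset\Supp F$. The point is that you never need to compare all the $A_i$ simultaneously: freeness is tested one prime at a time, and the inequality $\bM Z'.\ge\bM Z'.^A$ with equality along $P$ is exactly what makes this local test go through. No abundance-type statement is needed.

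For parts~(ii) and~(iii), your sketch is too vague compared with the paper's direct arguments. For~(ii), one simply replaces $M'$ by a general irreducible $A'\in\frac{1}{c}|cM|$ and applies the classical~(ii) to the pair $(X',B'+A')$. For~(iii), the paper reduces to a curve base via \cite[Section~3]{Flo14}, notes that $I\bM Z',Z'.$ is then integral, and for each closed point $P\in Z'$ chooses $A$ so that $I\bM Z',Z'.=I\bM Z',Z'.^A+\sum n_iP_i$ with $P\notin\{P_i\}$; freeness of $I\bM Z',Z'.^A$ gives freeness at $P$.

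Your inductive scheme via a tower of Mori fiber spaces is a different strategy and, as you correctly identify, is obstructed by Fano generic fibers of Picard number one; the paper does not pursue this route for Theorem~\ref{PS conjecture implies generalized conj}.
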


In the hope of a possible inductive approach, it is important to relate the two conjectures.

\begin{theorem} \label{PS conjecture implies generalized conj}
If Conjecture \ref{Prokhorov-Shokurov conjecture} is true in relative dimension $n$, then so is Conjecture \ref{generalized conjecture}.
More precisely, each part of Conjecture \ref{Prokhorov-Shokurov conjecture} implies the corresponding part of Conjecture \ref{generalized conjecture}.
\end{theorem}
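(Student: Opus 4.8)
The plan is to reduce Conjecture \ref{generalized conjecture} to Conjecture \ref{Prokhorov-Shokurov conjecture} by trading the nef part $M'$ for a general effective divisor in its $\Q$-linear equivalence class. Since $M$ is semi-ample and $|cM|$ is basepoint-free, I would first replace $X$ by a log resolution of $(X',B')$ on which $M$ descends, and fix a general member $A_X \in |cM|$; then $A_X$ is reduced, smooth, and has simple normal crossings with the pertinent divisors. Writing $\pi \colon X \to X'$ for the birational morphism and setting $A' \coloneqq \tfrac{1}{c}\,\pi_* A_X$, one obtains an effective $\Q$-divisor with $A' \sim_\Q M'$ such that $(X',B'+A')$ is a sub-pair with $\Q$-boundary, $\K X'. + B' + A' \sim_{\Q,f} 0$, and $(X',B'+A')$ is klt over the generic point of $Z'$ by generality of $A_X$. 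The fibration $f \colon X' \to Z'$ is unchanged, so it has the same relative dimension $n$ and $\dim X'_\eta = \dim X_\eta$; moreover the horizontal multiplicities of $B'+A'$ are exactly those of $B^h$ together with $\tfrac{1}{c}$, a finite set determined by $B$ and $c$ alone.

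Part (ii) is then immediate: Conjecture \ref{Prokhorov-Shokurov conjecture}(ii) applied to $(X',B'+A') \to Z'$ yields a positive integer $I_0$ depending only on $\dim X_\eta$ and the horizontal multiplicities of $B'+A'$ — hence only on $\dim X'_\eta$, the multiplicities of $B^h$, and $c$ — with $I_0(\K X'_\eta. + B'_\eta + A'_\eta) \sim 0$. Since $A'$ is a legitimate representative of the $\Q$-linear equivalence class of $M'$, and Conjecture \ref{generalized conjecture}(ii) explicitly allows replacing this representative, this is precisely the claim.

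For parts (i) and (iii) I would compare the moduli b-divisor $\bM Z'.$ of the generalized pair with the moduli b-divisors $\bM Z'.^{A'}$ of the auxiliary pairs $(X',B'+A') \to Z'$. By the analysis in Remark \ref{remark min max}, which applies with $\epsilon = 0$ because $M$ is semi-ample, on any fixed birational model $Z$ of $Z'$ there exist finitely many general members $A_1, \dots, A_l \in |cM|$, with associated divisors $A'_1, \dots, A'_l \sim_\Q M'$, such that $\bM Z', Z. = \max_{1 \le i \le l} \bM Z', Z.^{A'_i}$. Applying Conjecture \ref{Prokhorov-Shokurov conjecture}(i), respectively (iii), to each $(X',B'+A'_i)$, every $\bM Z'.^{A'_i}$ is b-semi-ample, respectively satisfies $I_1 \bM Z'.^{A'_i} = \overline{L_i}$ for a basepoint-free divisor $L_i$ on some model, where — since the horizontal multiplicities of $B'+A'_i$ form the same fixed finite set — the integer $I_1$ depends only on $\dim X'$, the horizontal multiplicities of $B$, and $c$. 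Combining this with the fact, established in Theorem \ref{full generality theorem firbations} (and Theorem \ref{bM b-divisor}), that $\bM Z'.$ is $\Q$-Cartier and b-nef and in fact descends onto the base of a weakly semi-stable model, I would pass to a model $Z$ on which $\bM Z'.$ descends and every relevant $\overline{L_i}$ is realised, and deduce that the coefficient-wise maximum of the basepoint-free divisors computing $I_1 \bM Z', Z.$ is again basepoint-free, so that $I_1 \bM Z'. = \overline{L}$; statement (i) follows a fortiori.

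The main obstacle is this last step. The description of $\bM Z'.$ furnished by Remark \ref{remark min max} is only a finite maximum \emph{on each fixed model}, and refining the model may a priori force one to enlarge the finite collection $\{A_i\}$; moreover a finite maximum of basepoint-free divisors need not be basepoint-free. One must therefore combine the uniformity of $I_1$ in Conjecture \ref{Prokhorov-Shokurov conjecture}(iii) with a boundedness argument limiting which b-divisors $\bM Z'.^{A'}$ can occur — exploiting that they are squeezed between the fixed b-nef $\Q$-Cartier b-divisor $\bM Z'.$ and, by Remark \ref{remark rel dim 1 M almost nef and good}, a fixed b-nef and b-good b-divisor — so that a single model and a single finite collection $\{A_i\}$ suffice simultaneously, and then verify that on that model the resulting maximum, being additionally b-nef and b-Cartier, is basepoint-free. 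For part (i) alone the argument is softer, since one only needs semi-ampleness on a single model and may control the discrepancy between $\bM Z'.$ and the moduli b-divisor of a single sufficiently general $(X',B'+A')$ directly through the boundary b-divisor.
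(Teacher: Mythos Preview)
Your treatment of part (ii) is essentially the paper's argument. For parts (i) and (iii), however, the approach via a finite coefficient-wise maximum of basepoint-free divisors has exactly the gap you identify in your last paragraph, and the fix the paper uses is not a boundedness or squeezing argument of the type you suggest.

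The paper sidesteps the problem by working one prime (or one point) at a time, never taking a maximum over several $A_i$. For (i): pass to a model $Z$ where $\bM Z'.$ descends and resolve the linear series as $|\bM Z',Z.|_\Q = |L|_\Q + F$ with $|L|_\Q$ free and $F \geq 0$. If $F \neq 0$, choose a prime component $P$ of $F$; by Remark~\ref{remark min max} there is a \emph{single} $A \sim_\Q M$ with $\mult_P \bM Z',Z. = \mult_P \bM Z',Z.^A$, so that $\bM Z',Z. = \bM Z',Z.^A + E$ with $E \geq 0$ and $P \not\subset \Supp E$. Since $\bM Z'.^A$ is b-semi-ample by Conjecture~\ref{Prokhorov-Shokurov conjecture}(i), the series $|\bM Z',Z.|_\Q$ is free at the generic point of $P$, contradicting $P \subset \Supp F$. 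For (iii) the paper first invokes the argument of \cite[Section~3]{Flo14} to reduce to $\dim Z' = 1$; on a curve $I_1 \bM Z',Z'.$ is integral by Remark~\ref{remark min max}, and for each closed point $P$ one again picks a single $A$ with $\mult_P \bM Z',Z'. = \mult_P \bM Z',Z'.^A$, so that $I_1 \bM Z',Z'. = I_1 \bM Z',Z'.^A + \sum n_i P_i$ with $P \neq P_i$, whence freeness at $P$. You omit the reduction to curves, and without it there is no evident way to globalize local freeness in higher dimension: the finite family $\{A_i\}$ realizing the maximum genuinely depends on the model, and your proposed squeezing between $\bM Z'.$ and a b-nef b-good divisor does not control this.
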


\begin{proof}
Throughout the proof, we fix the relative dimension of the fibrations.
Also, let $(X',B'+M')$ be as in Conjecture \ref{generalized conjecture}.

Now, assume that part (i) of Conjecture \ref{Prokhorov-Shokurov conjecture} holds.
Let $Z$ be a higher model of $Z'$ where $\bM Z'.$ descends.
We may also assume that $|\bM Z',Z.|_\Q$ is resolved, that is $|\bM Z',Z.|_\Q = |L|_\Q + F$, where $|L|_\Q$ is a basepoint-free $\Q$-linear series and $F \geq 0$.
{Notice that, by Remark \ref{remark rel dim 1 M almost nef and good}, we know that $|\bM Z',Z.|_\Q \neq \emptyset$.}
Arguing by contradiction, we have $F \neq 0$.
Let $P$ be a prime divisor contained in the support of $F$.
By Remark \ref{remark min max}, there is $0 \leq A \sim_\Q M$ such that $\mult_P \bM Z'. = \mult_P \bM Z'.^A$.
Up to replacing $Z$ with a higher model, we may assume that $\bM Z'.^A$ descends to $Z$.
By assumption, $|\bM Z',Z.^A|_\Q$ is basepoint-free.
By construction, $\bM Z',Z. = \bM Z',Z.^A + E$, where $E \geq 0$ and $\mult_P E = 0$.
Thus, $|\bM Z',Z.|_\Q$ is free at the generic point of $P$, which is the required contradiction.

Now, assume that part (ii) of Conjecture \ref{Prokhorov-Shokurov conjecture} holds.
By assumption, we may write $\K X'.+B'+M' \sim_\Q \K X'. + B' + A'$, where $A'$ is horizontal over $Z'$, irreducible, and with coefficient $\frac{1}{c}$.
Thus, part (ii) of Conjecture \ref{generalized conjecture} follows.

Now, we are left with showing that part (iii) of Conjecture \ref{generalized conjecture} holds if we assume part (iii) of Conjecture \ref{Prokhorov-Shokurov conjecture}.
Notice that the arguments in \cite[Section 3]{Flo14} go through verbatim in the setup of generalized pairs.
Therefore, we can reduce to the case when the base $Z'$ is a smooth curve.
Let $I$ be the integer guaranteed by part (iii) of Conjecture \ref{Prokhorov-Shokurov conjecture}, where we allow $\coeff ( B'_\eta ) \cup \lbrace \frac{1}{c} \rbrace$ as the set of coefficients.
Then, by Remark \ref{remark min max} $I \bM Z',Z'.$ is integral.
Thus, it suffices to show it is basepoint-free.

By Remark \ref{remark min max}, for every point $P \in Z'$, there is $A \sim_c M$ such that $I \bM Z',Z'. = I \bM Z',Z'.^A + \sum \subs i=1.^k n_i P_i$, where $P \neq P_i$ for all $i$ and $n_i \in \N$.
Since $I \bM Z',Z'.^A$ is basepoint-free, $I \bM Z',Z'.$ is free at $P$. As $P$ is arbitrary, the claim follows.
\end{proof}

As an immediate corollary, we have the following.

\begin{corollary} \label{generalized ps conj curves}
Conjecture \ref{generalized conjecture} holds true if the relative dimension is 1.
\end{corollary}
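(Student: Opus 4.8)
The plan is to deduce this immediately from Theorem \ref{PS conjecture implies generalized conj} together with the known case of Conjecture \ref{Prokhorov-Shokurov conjecture} in relative dimension $1$. First I would recall that all three parts of Conjecture \ref{Prokhorov-Shokurov conjecture} are established for contractions of relative dimension $1$: part (i) and part (iii) (in the effective, explicitly bounded form) are due to Prokhorov and Shokurov \cite[Theorem 8.1]{PS}, and part (ii) follows in that range as well, the integer $I_0$ depending only on the horizontal multiplicities of $B$. Granting this, Theorem \ref{PS conjecture implies generalized conj}, which asserts that each part of Conjecture \ref{Prokhorov-Shokurov conjecture} in a fixed relative dimension $n$ implies the corresponding part of Conjecture \ref{generalized conjecture} in relative dimension $n$, gives the statement directly for $n = 1$.

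The only point requiring a moment's care is that the reduction in Theorem \ref{PS conjecture implies generalized conj} applies the classical conjecture not to $(X',B')$ but to a sub-pair whose horizontal coefficient set is the original one enlarged by $\frac{1}{c}$, where $c$ is the minimal positive integer with $|cM|$ basepoint-free; one must therefore invoke \cite[Theorem 8.1]{PS} with this enlarged (still finite) set of coefficients, which is legitimate since that theorem is uniform in the horizontal multiplicities. There is no substantive obstacle here: the entire content is already packaged in Theorem \ref{PS conjecture implies generalized conj}, and the corollary is just the instantiation $n=1$ where the hypothesis of that theorem is unconditionally known.

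\begin{proof}
By \cite[Theorem 8.1]{PS}, Conjecture \ref{Prokhorov-Shokurov conjecture} holds in relative dimension $1$, with the bound in parts (ii) and (iii) uniform in the (finite) set of horizontal multiplicities. Applying Theorem \ref{PS conjecture implies generalized conj} with $n=1$ — which, for each of the three parts, reduces the corresponding assertion of Conjecture \ref{generalized conjecture} to that of Conjecture \ref{Prokhorov-Shokurov conjecture} for a sub-pair whose set of horizontal coefficients is $\coeff(B^h) \cup \{\tfrac{1}{c}\}$ — the conclusion follows.
\end{proof}
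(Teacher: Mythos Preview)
Your proof is correct and is essentially identical to the paper's own argument: the paper simply says that the corollary follows immediately from Theorem \ref{PS conjecture implies generalized conj} and \cite[Theorem 8.1]{PS}. Your additional remark about the enlarged coefficient set $\coeff(B^h)\cup\{\tfrac{1}{c}\}$ is already absorbed into the proof of Theorem \ref{PS conjecture implies generalized conj}, so nothing further is needed here.
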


\begin{proof}
It follows immediately from Theorem \ref{PS conjecture implies generalized conj} and \cite[Theorem 8.1]{PS}.
\end{proof}

Now, we are ready to show how to use Corollary \ref{generalized ps conj curves} in order to prove certain cases of Conjecture \ref{Prokhorov-Shokurov conjecture}.

\begin{proof}[Proof of Theorem \ref{prokhorov shokurov rel dim 2}]
{Up to replacing $Z$ with a higher model and $X$ with the normalization of the fiber product, by Chow's lemma, we may assume that $Z$ is quasi-projective.
Then, by \cite{HX13}, we may assume that $Z$ is projective.
}
Let $X \subs \overline{\eta}.$ be the geometric generic fiber.
Then, it admits a minimal resolution $X' \rar X \subs \overline{\eta}.$.
Notice that $X'$ is not isomorphic to $\pr 2.$.
This morphism is defined over a finite extension of $K(Z)$.
Therefore, up to a generically finite base change of $Z$, we may assume that the minimal resolution of $X \subs \overline{\eta}.$ is defined over $K(Z)$.
Thus, we can replace $X$ with the corresponding blow-up resolving the generic fiber.
Notice that in this process, $B^h$ remains effective.
Thus, from now on, we may assume that $X_\eta$ is smooth.

By the assumptions of the theorem, we have that the Kodaira dimension of the generic fiber satisfies $\kappa (X _\eta) \leq 0$.
In case $\kappa (X _\eta) = 0$, we have $B^h=0$, and the statement follows from work of Fujino \cite[Lemma 4.1, Corollary 6.4]{Fuj03}.

Therefore, we may assume $\kappa (X_\eta)<0$, and that $X_\eta$ is not isomorphic to $\pr 2.$.
Hence, either the minimal model of the geometric generic fiber $X_{\overline{\eta}}$ is a minimal ruled surface over a curve \cite[Theorem 1.29]{KM}, or $X_{\overline{\eta}}$ maps to the blow-up of $\pr 2.$ at one point.
In both cases, the geometric generic fiber admits a morphism to a curve.
Thus, up to a base change of the fibration by a generically finite morphism, we may assume that the generic fiber itself admits a morphism to a curve.
In particular, we have a commutative diagram of rational maps

\begin{center}
\begin{tikzpicture}
\matrix(m)[matrix of math nodes,
row sep=2.6em, column sep=2.8em,
text height=1.5ex, text depth=0.25ex]
{X & Y\\
& Z\\};
\path[->,font=\scriptsize,>=angle 90]
(m-1-1) edge[dashed] node[auto] {$g$} (m-1-2)
edge node[auto] {$f$} (m-2-2)
(m-1-2) edge[dashed] node[auto] {$h$} (m-2-2);
\end{tikzpicture}
\end{center}
where $g$ and $h$ are actual morphisms over an open subset of $Z$.
Thus, up to taking birational modifications of $X$ and $Y$ that are isomorphism over the generic point of $Z$, we may assume that $g$ and $h$ are morphisms.
Then, up to taking the normalization of $Y$ in $Z$, we may assume that $g$ and $h$ have connected fibers.
Notice that, by construction, the relative dimensions of $g$ and $h$ are both 1, and the generic fibers are smooth.
Furthermore, the generic fiber of $g$ is rational.

In order to better understand the above picture, we consider the general fibers.
As all three morphisms have smooth generic fibers, the general fibers are smooth as well.
We denote them by $F$, $G$ and $H$ respectively.
The curve $G$ is isomorphic to $\pr 1.$, and is contained in the surface $F$.
Since $F$ is general, the sub-klt sub-pair $(X,B)$ induces a sub-klt sub-pair $(F,B_F)$ \cite[Corollary 9.5.6]{LAZ2}.
Since $B$ is effective along $X_\eta$, $B_F$ is effective too, and $(F,B_F)$ is klt.
In particular, we have $\K F. + B_F \sim_\Q 0$.
As we have the morphism $F \rar H$, we can apply the canonical bundle formula in this setup. It will produce an effective boundary divisor $B_H$ and a nef divisor $M_H$ such that $\K H. + B_H + M_H \sim_\Q 0$.
This implies that the genus of $H$, and hence of the generic fiber of $h$, is either 0 or 1.

Notice that, to obtain the morphism $X \rar Y$, we did not blow up any horizontal stratum, as the morphism of generic fibers was well defined up to a base change.
Therefore, in the new model $X$, the divisor $B$ is effective over a dense open set of $Z$.
Then, by Corollary \ref{generalized ps conj curves}, the moduli b-divisor $\bM Y.$ is b-semi-ample.
Also, by Proposition \ref{inv adj fiber spaces}, the boundary part $\bB Y,Y.$ is effective, and $(Y,\bB Y,Y. + \bM Y,Y.)$ is generalized klt over the generic point of $Z$.
Therefore, we can apply Corollary \ref{generalized ps conj curves} to the fibration $Y \rar Z$.
Furthermore, by Lemma \ref{lemma diagram of fibrations}, the moduli, and boundary b-divisors induced by $Y \rar Z$ and by $X \rar Z$ agree.
Therefore, we conclude that $\bM Z.$ is b-semi-ample.
\end{proof}

\begin{remark}{\em
The proof of Theorem \ref{prokhorov shokurov rel dim 2} does not imply effective b-semi-ampleness, because in the course of the proof we replaced the base of the fibration with a generically finite cover.
One would need a bound on the degree of the cover in order to achieve effectivity.}
\end{remark}

\begin{proof}[Proof of Theorem \ref{prokhorov shokurov rel dim 2 revised}]
{
Let $B^h$ denote the horizontal part of $B$.
Let $(X^m,\Delta^m)$ be a $\Q$-factorial dlt model of $(X,B^h)$, and let $B^m$ the log-pullback of $B$ to $X^m$.
Since $(X,B)$ is klt over the generic point of $Z$, we have that $(X^m,(B^m)^h)$ is klt.
Then, let $X' \rar X^m$ be a terminalization of $(X^m,(B^m)^h)$.
Let $B'$ denote the log-pullback of $B$ to $X'$.
Notice that, by construction, we have $(B')^h \geq 0$.}

{
Since $(X,B)$ is not terminal, the morphism $X'_\eta \rar X_\eta$ is not an isomophism.
In particular, $X_{\overline{\eta}}$ is not isomorphic to $\pr 2.$.
Therefore, Theorem \ref{prokhorov shokurov rel dim 2} applies to $(X',B') \rar Z$, and the claim follows.}
\end{proof}

The approach in Theorem \ref{prokhorov shokurov rel dim 2} suggests that the proof of part (i) of Conjecture \ref{Prokhorov-Shokurov conjecture} can be reduced to two extreme cases: Mori fiber spaces, and cases when $X_\eta$ is Calabi--Yau.
If we use techniques from the MMP, we avoid generically finite base changes, and we can also address part (iii) of Conjecture \ref{Prokhorov-Shokurov conjecture}.

\begin{theorem}
Fix a natural number $n$.
Assume that part (i) (or (iii)) of Conjecture \ref{Prokhorov-Shokurov conjecture} is true if the relative dimension is strictly less than $n$.
Then, part (i) (respectively (iii)) of Conjecture \ref{Prokhorov-Shokurov conjecture} in relative dimension $n$ can be reduced to the two following cases:
\begin{itemize}
\item $f  \colon  X \rar Z$ is a $\K X.$-Mori fiber space; or
\item $\K X. \sim_{\Q,Z} 0$, and $B^h=0$.
\end{itemize}
\end{theorem}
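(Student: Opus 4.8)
The plan is to run a relative minimal model program for $K_X$ over $Z$ and to track the moduli b-divisor, reducing to the two extreme outputs of such a program.

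First I would perform some harmless reductions. Since $\bM Z.$ depends only on the crepant-birational class of the fibration $(X,B)/Z$ and is insensitive to the birational model of the base, and since (for part (iii)) effectivity of b-semi-ampleness is undisturbed as long as one does not perform generically finite base changes, one may argue as in the proof of Theorem \ref{prokhorov shokurov rel dim 2} to assume that $Z$ is projective, and then replace $X$ by a log resolution of $(X,B)$ to assume $X$ is smooth; in particular a $K_X$-MMP over $Z$ with scaling of an ample divisor exists and terminates by \cite{BCHM}. The key point is that, because $\K X. + B \sim_{\Q,Z} 0$, each step of this MMP contracts or flips a ray on which $\K X. + B$ is trivial, hence is $(\K X. + B)$-crepant; the data $(X,B)$ is transported crepantly and the induced b-divisors $\bB Z.$ and $\bM Z.$ are unchanged throughout.

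Next I would run this MMP and split according to its output, which is dictated by whether $\K X_\eta.$ is pseudo-effective on the generic fiber. If it is not, the MMP ends with a Mori fiber space $g\colon X'\rar Y$ over $Z$, with $X'$ replacing $X$ and $\bM Z.$ unchanged. If $\dim Y=\dim Z$, then $Y\rar Z$ is birational, $g$ is a $\K X'.$-Mori fiber space of relative dimension $n$, and the moduli b-divisor of $g$ equals $\bM Z.$, so it suffices to treat $g$; this is the first listed case. If $\dim Y>\dim Z$, then $X'\rar Y$ and $Y\rar Z$ have relative dimension $<n$: by the inductive hypothesis, Conjecture \ref{Prokhorov-Shokurov conjecture} applied to $X'\rar Y$ gives that $\bM Y.$ is b-semi-ample (resp.\ effectively b-semi-ample), so $(Y,\bB Y.+\bM Y.)$ is a generalized pair, klt over $\eta_Z$, with semi-ample moduli part and $\K Y.+\bB Y.+\bM Y.\sim_{\Q,Z}0$; applying Conjecture \ref{generalized conjecture}, which holds in relative dimension $<n$ by Theorem \ref{PS conjecture implies generalized conj} and the inductive hypothesis, to $Y\rar Z$ shows that the moduli b-divisor of $Y\rar Z$ is b-semi-ample (resp.\ effectively so), and by Lemma \ref{lemma diagram of fibrations} it coincides with $\bM Z.$, so this branch needs neither listed case. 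If instead $\K X_\eta.$ is pseudo-effective, then since $(X_\eta,B_\eta)$ is klt with $\K X_\eta.+B_\eta\sim_\Q0$ and (after, if necessary, a preliminary reduction to $B^h\ge0$) $B^h\ge0$, one deduces $\K X_\eta.\sim_\Q0$ and $B^h=0$; the MMP then ends with a relative minimal model $X'\rar Z$ with $\K X'.$ nef over $Z$, and since the general fibers satisfy $\K X'_z.\sim_\Q0$, relative abundance — which reduces to the trivial statement for the Calabi--Yau generic fiber — shows $\K X'.$ is semi-ample over $Z$ and, being trivial on the generic fiber, $\K X'.\sim_{\Q,Z}0$. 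As $\bM Z.$ is unchanged, we have reduced to the case $\K X.\sim_{\Q,Z}0$ with $B^h=0$, the second listed case.

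The step I expect to be the main obstacle is the Mori fiber space branch with $\dim Y>\dim Z$: one must verify that the generalized pair $(Y,\bB Y.+\bM Y.)$ produced by the generalized canonical bundle formula genuinely satisfies the hypotheses of Conjecture \ref{generalized conjecture} in lower relative dimension — effectivity of $\bB Y.$ over $\eta_Z$, semi-ampleness (not merely b-nefness) of $\bM Y.$, and $\K Y.+\bB Y.+\bM Y.\sim_{\Q,Z}0$ — and that the canonical bundle formula b-divisors obtained from $X\rar Z$ and from $Y\rar Z$ agree, which is precisely Lemma \ref{lemma diagram of fibrations}; this is why one first runs the MMP over $Y$, so that $\bM Y.$ is a $\Q$-Cartier b-divisor. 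For part (iii) one additionally tracks the horizontal multiplicities and the resulting integer bound through both invocations, which is legitimate exactly because the entire argument proceeds through the MMP and the canonical bundle formula and uses no generically finite base change.
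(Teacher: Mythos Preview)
Your overall strategy is the paper's: pass to a suitable $\Q$-factorial model, run a $K_X$-MMP over $Z$, and in the Mori fiber space branch with $\dim Y > \dim Z$ combine the inductive hypothesis, Theorem \ref{PS conjecture implies generalized conj}, and Lemma \ref{lemma diagram of fibrations} exactly as the paper does.

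The gap is in the other branch. You assert that the $K_X$-MMP over $Z$ ``exists and terminates by \cite{BCHM}'', but \cite{BCHM} only gives termination when $K_X$ is not pseudo-effective over $Z$ (ending in a Mori fiber space) or when one runs with a big boundary; it does not terminate a bare $K_X$-MMP when $K_{X_\eta}$ is pseudo-effective. Similarly, the sentence ``relative abundance --- which reduces to the trivial statement for the Calabi--Yau generic fiber --- shows $K_{X'}$ is semi-ample over $Z$'' is not a triviality: knowing $K_{X'_z}\sim_\Q 0$ on general fibers does not by itself force $K_{X'}\sim_{\Q,Z}0$. The paper handles this case by first observing that $B^h=0$ forces $K_{X_\eta}\sim_\Q 0$, so the generic fiber already is a good minimal model, and then invoking \cite[Theorem 1.4]{Bir12} and \cite[Theorem 1.1]{HMX14b} to conclude that the $K_X$-MMP over $Z$ terminates with a \emph{good} minimal model, i.e., $K_{X'}\sim_{\Q,Z}0$. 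Once you replace the appeal to \cite{BCHM} and the ``trivial abundance'' remark with these citations, your argument goes through and coincides with the paper's.
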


\begin{proof}
{As argued in the proof of Theorem \ref{prokhorov shokurov rel dim 2}, we may assume that the varieties involved are projective.}
By \cite[Theorem 3.1]{KK10}, we can assume that $X$ is $\Q$-factorial, and $(X,B^h)$ is klt.
Thus, we can run a $\K X.$-MMP over $Z$ with the scaling of an ample divisor.
Notice that there are two cases: either $B^h=0$ or $B^h>0$.
In the former case, the MMP terminates with a good minimal model for $\K X.$ by \cite[Theorem 1.4]{Bir12} and \cite[Theorem 1.1]{HMX14b}.
In the latter case, the MMP terminates with a Mori fiber space by \cite{BCHM}.
Call $g \colon  X' \rar Z$ the final model reached by the MMP, and let $(X',B')$ be the sub-pair induced by $(X,B)$.

First, assume that $X'$ is a good minimal model for $X$.
Then, $\K X'.\sim \subs \Q,Z. 0$.
Since $\K X'. + B' \sim \subs \Q,g. 0$ and $B^h=0$, it follows that $B'$ is the pullback of a divisor on $Z$.

Now, assume that the MMP terminates with a Mori fiber space $h \colon  X' \rar Y$.
If $Y$ is birational to $Z$, we are done. Hence, we may assume that $\dim X' -\dim Y < n$.
Then, the fibration $(X',B') \rar Y$ satisfies the conditions of Conjecture \ref{Prokhorov-Shokurov conjecture} for a smaller relative dimension.
By assumption, a generalized pair structure $(Y,\bB Y. + \bM Y.)$ is induced on $Y$, where $\bM Y.$ is b-semi-ample (respectively effectively b-semi-ample).
Thus, we can apply the lower dimensional case of Conjecture \ref{generalized conjecture}, which holds by Theorem \ref{PS conjecture implies generalized conj}, to the fibration $Y \rar Z$, and induce a generalized pair structure $(Z,\bB Z. + \bM Z.)$ on $Z$, where $\bM Z.$ is b-semi-ample (respectively effectively b-semi-ample).
By Lemma \ref{lemma diagram of fibrations}, the generalized pair $(Z,\bB Z. + \bM Z.)$ induced this way is the same as the one induced by $(X,B)$.
Thus, the claim follows.
\end{proof}

\printbibliography

\Addresses

\end{document}